\newcommand{\leqs}{\leqslant}
\newcommand{\geqs}{\geqslant}
\newcommand{\vs}{\vspace{2mm}}
\newcommand{\Aut}{\operatorname{Aut}}
\newcommand{\Un}{{\rm U}}
\newcommand{\Li}{{\rm L}}
\newcommand{\imod}[1]{\allowbreak\mkern4mu({\operator@font mod}\,\,#1)}
\newtheorem{theorem}{Theorem} 
\newtheorem*{conj*}{Conjecture}
\newtheorem*{thrm*}{Theorem}
\newtheorem{corol}[theorem]{Corollary}
\newtheorem{thm}{Theorem}[section] 
\newtheorem{prop}[thm]{Proposition} 
\newtheorem{lem}[thm]{Lemma}
\newtheorem{cor}[thm]{Corollary}
\theoremstyle{definition}
\newtheorem{remark}{Remark}
\newtheorem{rmk}[thm]{Remark}
\newtheorem*{example*}{Example}
\begin{document}

\author{Emily V. Hall}
\address{E.V. Hall, School of Mathematics, University of Bristol, Bristol BS8 1UG, UK}
\email{ky19128@bristol.ac.uk}

\title[]{The Quasiprimitive Almost Elusive Groups} 
\maketitle

\begin{abstract}
Let $G$ be a nontrivial transitive permutation group on a finite set $\Omega$ and recall that an element of $G$ is a derangement if it has no fixed points. Derangements always exist by a classical theorem of Jordan, but there are so-called elusive groups that do not contain any derangements of prime order. In a recent paper, Burness and the author introduced the family of almost elusive groups, which contain a unique conjugacy class of derangements of prime order. In this paper, we complete the classification of the quasiprimitive almost elusive groups.
\end{abstract}

\section{Introduction}
Let $G\leqs{\rm Sym}(\Omega)$ be a transitive permutation group with $|\Omega|\geqs 2$ and point stabiliser $H$. A classical theorem of Jordan \cite{Jordan} from 1872, which is an easy consequence of the orbit counting lemma, shows that $G$ always contains elements that act fixed point freely on $\Omega$. We call such elements \emph{derangements}. In recent years derangements have been widely studied and they emerge naturally in several different contexts. For example, Serre \cite{serre} discusses some interesting applications of Jordan's theorem in a wide range of different areas of mathematics. 

A major focus of research in this area concerns the existence of derangements with prescribed properties. For example, an extension of Jordan's theorem due to Fein, Kantor and Schacher \cite{FKS} from 1981 shows that $G$ always contains a derangement of prime power order. The proof given in \cite{FKS} requires the classification of finite simple groups, which is in clear contrast to the elementary proof of Jordan's theorem. Interestingly, although we are guaranteed the existence of derangements of prime power order, this does not always mean we can find derangements of prime order. For example, the sporadic group ${\rm M}_{11}$ with its primitive action on 12 points (that is, its action on the right cosets of a maximal subgroup ${\rm PSL}_2(11)$) does not contain a derangement of prime order (it does however contain derangements of order 4 and 8). Following \cite{CGJKKMN}, we say that a transitive group is \emph{elusive} if it contains no derangements of prime order. Although a complete classification of the elusive groups is currently out of reach, a large class of examples were classified by Giudici \cite{G}. His main theorem states that if $G$ is an elusive group with a transitive minimal normal subgroup, then $G={\rm M}_{11}\wr K$ with its product action on $\Delta^k$, where $|\Delta|=12$, $k$ is a positive integer and $K\leqs S_k$ is transitive. 

Notice that the set of derangements in $G$ is a normal subset, so it is a union of conjugacy classes. Therefore, it is natural to consider the number of conjugacy classes of derangements. It is shown by Burness and Tong-Viet in \cite{BTV1} that a primitive permutation group $G$ with point stabiliser $H$ has a unique conjugacy class of derangements if and only if it is either sharply 2-transitive, or $(G,H)=(A_5,D_{10})$ or $(\Li_2(8){:}3,D_{18}{:}3)$. This result was later extended by Guralnick \cite{Gur}, where he shows that a transitive group has a unique class of derangements only if it is primitive.

Motivated by the work of Burness, Tong-Viet and Guralnick, a natural extension of elusivity was introduced in \cite{BHall}. We say that a transitive permutation group is \emph{almost elusive} if there exists exactly one conjugacy class of derangements of prime order. For example if $n$ is a prime power, then the symmetric group $S_n$ with its natural action on $n$ points is almost elusive. If $G$ is a finite group and $H$ is a core-free subgroup, then it will be convenient to say that $(G,H)$ is almost elusive if the natural transitive action of $G$ on the cosets of $H$ is almost elusive. 

Recall that a permutation group is \emph{quasiprimitive} if every nontrivial normal subgroup is transitive. For instance, if $G$ is an almost simple group with socle $G_0$ and point stabiliser $H$ then $G$ is quasiprimitive if and only if $G=G_0H$. Note that every primitive group is quasiprimitive. In \cite[Theorem 1]{BHall}, using a version of the O'Nan-Scott theorem for quasiprimitive groups due to Praeger \cite{P93}, it is shown that every quasiprimitive almost elusive group is either almost simple or 2-transitive affine. Moreover, in \cite{BHall} the primitive almost simple groups with socle an alternating, a sporadic or a rank one group of Lie type were handled, with the remaining classical groups treated in \cite{Hall}. Therefore, to complete the classification of the primitive almost elusive groups, it remains to consider the exceptional groups of Lie type and the 2-transitive affine groups.

Our first main result is Theorem \ref{t:main1}. In the statement, we exclude the groups with socle $G_2(2)'$ and ${}^2G_2(3)'$ in view of the isomorphisms $G_2(2)'\cong \Un_3(3)$ and ${}^2G_2(3)'\cong\Li_2(8)$. For further information on the cases that arise in Theorem \ref{t:main1}, we refer the reader to Tables \ref{tab:fullastab2} in Section \ref{s:tables} and Remark \ref{r:rmkfullastab1}.

\begin{theorem}\label{t:main1}
Let $G\leqs {\rm Sym}(\Omega)$ be an almost simple primitive permutation group with point stabiliser $H$ and socle $G_0$, an exceptional group of Lie type. Then $G$ is almost elusive if and only if $(G,H)$ is one of the following $$(G_2(4).2,{\rm J}_2{:}2),\, ({}^2F_4(2)',\Li_2(25)),\,({}^2F_4(2),\Li_2(25).2_3) \mbox{ or }({}^2F_4(2),5^2{:}4S_4).$$
\end{theorem}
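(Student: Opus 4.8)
The plan is to build on the reduction in \cite{BHall}. Since the primitive almost simple groups with socle an alternating group, a sporadic group or a rank one group of Lie type are handled in \cite{BHall}, and those with classical socle in \cite{Hall}, we may assume $G_0\notin\{{}^2B_2(q),{}^2G_2(q)\}$, so $G_0$ is one of $G_2(q)$, ${}^3D_4(q)$, ${}^2F_4(q)$, $F_4(q)$, $E_6(q)$, ${}^2E_6(q)$, $E_7(q)$, $E_8(q)$ over $\mathbb{F}_q$ with $q=p^f$, and (since $G$ is primitive) $H$ is a core-free maximal subgroup of $G$, possibly a novelty. The basic principle, as in \cite{BTV1,BHall,Hall}, is that an element $x\in G$ of prime order $r$ is a derangement on $\Omega$ if and only if $\fpr(x)=|x^G\cap H|/|x^G|=0$, i.e.\ $x^G\cap H=\emptyset$. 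Two consequences are worth isolating: an element of order $r$ can be a derangement only if $r$ divides $|\Omega|$ (otherwise $H$ contains a full Sylow $r$-subgroup of $G$), and if $r\nmid|H|$ then \emph{every} element of order $r$ is a derangement. From these I would record the following dichotomy: if $G$ is almost elusive then either \textup{(i)}~there is a unique prime $r_0$ dividing $|G|$ but not $|H|$, the group $G$ has a single conjugacy class of elements of order $r_0$, and no element of prime order $\ne r_0$ is a derangement; or \textup{(ii)}~every prime dividing $|G|$ divides $|H|$ and exactly one $G$-class of elements of prime order consists of derangements.

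The main reduction then uses primitive prime divisors to rule out all but finitely many configurations. Write $|G_0|=q^N\prod_d\Phi_d(q)^{a_d}$; for each exceptional type the set $D=\{d:a_d>0\}$ is small and explicit, and by Zsygmondy's theorem each $\Phi_d(q)$ with $d\in D$ has a primitive prime divisor $r_d$ for all but a few small $q$. On the other hand, by the theorems of Liebeck and Seitz the maximal subgroup $H$ is a parabolic subgroup, a reductive subgroup of maximal rank, a subfield or twisted subgroup, the normaliser of an exotic local subgroup, or an almost simple group with socle in a known short list; in each case one can read off directly which cyclotomic factors $\Phi_d(q)$ divide $|H|$. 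The claim is that, outside a bounded set of pairs $(G_0,q)$, there exist $d\ne d'$ in $D$ with $\Phi_d(q),\Phi_{d'}(q)\nmid|H|$ and $r_d\ne r_{d'}$ --- producing two $G$-classes of prime-order derangements and contradicting \textup{(i)}. This eliminates all parabolic subgroups at a stroke (the Levi factor of a parabolic contributes only cyclotomic polynomials of small index to $|H|$), all subfield subgroups, and essentially all maximal-rank subgroups once $q$ is not small; together with \textup{(ii)}, which forces $|\Omega|$ to be divisible only by primes dividing $|H|$ and hence is very restrictive, one is left with a finite list of candidate pairs $(G,H)$ in which $q$ is bounded.

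The finite list is then resolved by direct computation: for each candidate $(G,H)$ and each prime $r$ dividing $|\Omega|$ one determines the $G$-classes of elements of order $r$ that are derangements --- all of them if $r\nmid|H|$, and otherwise those $x^G$ with $x^G\cap H=\emptyset$, which can be detected from the permutation character $1_H^G$ or from the fusion of $H$-classes into $G$ --- and sums over $r$. Using the \textsc{Atlas} and the character table libraries of \textsc{Gap} and \textsc{Magma}, supplemented where necessary by computation in an explicit permutation or matrix representation, this analysis should isolate exactly the four pairs in the statement. In each of them one expects a single class of prime-order derangements: for $(G_2(4).2,\mathrm{J}_2{:}2)$ we have $|\Omega|=2^5\cdot13$ and the derangements of prime order are the elements of order $13$ (these form two $G_2(4)$-classes which fuse in $G_2(4).2$, while every involution of $G$ turns out to be conjugate into $\mathrm{J}_2{:}2$); for $({}^2F_4(2),5^2{:}4S_4)$ we have $|\Omega|=2^7\cdot3^2\cdot13$ and again the derangements of prime order have order $13$; and in the two cases related to $\mathrm{L}_2(25)$ we have $|\Omega|=2^8\cdot3^2$, so the unique derangement class necessarily consists of elements of order $2$ or $3$. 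Every other pair on the list produces at least two classes of prime-order derangements, or none, and is discarded.

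The hard part is the finishing step rather than any single argument. The bound on $q$ coming from the Zsygmondy reduction is not small, so a genuinely long list of small exceptional groups --- each with its full complement of maximal subgroups, including the exotic local subgroups and the almost simple maximal subgroups in $G_2(q)$, ${}^3D_4(q)$, $F_4(q)$, $E_6(q)$ and ${}^2E_6(q)$ for the least values of $q$ --- has to be examined one at a time, and for several of these the answer cannot simply be read off a table: one must carefully track class fusion, or compute directly with the permutation representation, to decide whether a second class of prime-order derangements appears. Organising this case-division so that it is simultaneously complete and manageable, rather than proving any isolated lemma, is where the real effort lies.
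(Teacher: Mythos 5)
Your overall frame (derangements detected by missing primes, the dichotomy between a single missing prime $r_0$ with a unique class of elements of order $r_0$ versus no missing prime, elimination of parabolic/subfield/most maximal-rank subgroups by exhibiting two distinct primitive prime divisors outside $|H|$) matches the spirit of the paper's reduction, which is organised around the necessary condition $\pi(G_0)-\pi(H_0)\leqs 1$ (Theorem \ref{t:pi}). But there is a genuine gap at the pivotal step: your claim that the cyclotomic/Zsigmondy argument leaves only ``a bounded set of pairs $(G_0,q)$'' which can then be finished by direct computation is false. The surviving configurations include infinite families in which exactly one cyclotomic factor of $|G_0|$ is missing from $|H|$ --- for instance $H_0={\rm SL}_3^{\epsilon}(q){:}2$ or ${}^2G_2(q)$ in $G_2(q)$, $H_0=G_2(q)$ or $({\rm SL}_2(q^3)\circ{\rm SL}_2(q)).2$ in ${}^3D_4(q)$, and $H_0=(2,q-1).\Omega_9(q)$ in $F_4(q)$ --- and these survive precisely when $q^6-1$ (respectively $q^{12}-1$) has a \emph{unique} primitive prime divisor, e.g.\ when $q^4-q^2+1$ is prime. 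Whether this happens for only finitely many $q$ is an open Diophantine problem (the paper can only control the exponent via the Nagell--Ljunggren-type result in Proposition \ref{p:blnag}), so no effective bound on $q$ exists and the ``finite list plus {\sc Atlas}/{\sc Magma}'' endgame cannot be carried out.

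What is missing is the argument that actually kills these infinite families: when a single prime $r$ divides $|G_0|$ but not $|H_0|$, every element of order $r$ is a derangement, so one must show that $G$ has at least \emph{two} conjugacy classes of elements of order $r$. The paper does this quantitatively, by counting semisimple classes: $G_2(q)$ contains at least $(r-1)/6$ classes of elements of order $r$ (Proposition \ref{p:classesG2}, obtained via eigenvalue sets in an ${\rm SU}_3(q)$ or ${\rm SL}_3(q)$ subgroup and the action on the minimal module), and ${}^3D_4(q)$, $F_4(q)$ contain $(q^4-q^2)/4$, respectively $(q^4-q^2)/12$, classes of elements of order $r=q^4-q^2+1$ (Proposition \ref{p:classes}, read off from the known class lists); dividing by $|{\rm Out}(G_0)|$ still leaves at least two $G$-classes of derangements for all but a handful of small $q$, which are pinned down by Lemma \ref{l:ppd2a3} and then treated separately (for G5 with $q=19$ and $q=8$ the paper exhibits extra derangements via Jordan forms of unipotent elements on the minimal module, and via a count of classes of order $3$). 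Without this class-counting ingredient --- or some substitute for it --- your dichotomy (i) cannot be refuted for the infinite families, and the proof does not close. The small-rank computational part of your plan and the verification of the four listed examples are fine and agree with what the paper does (Proposition \ref{p:smalldim} and the earlier results quoted from \cite{BHall}).
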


Next let $G=V{:}H\leqs {\rm AGL}(V)$ be a 2-transitive affine group with socle $V= (\mathbb{F}_p)^d$ and point stabiliser $H\leqs {\rm GL}_d(p)$, where $p$ is a prime and $d\geqs 1$. Here the 2-transitivity of $G$ implies that $H$ acts transitively on the non-zero vectors in $V$. These groups were classified by Hering \cite{Hering} and this is a key ingredient in our proof of Theorem \ref{t:mainaffine}. In part (iii) of Theorem \ref{t:mainaffine} we write $\mathcal{P}(n,i)$ for the $i^{\rm th}$ primitive group of degree $n$ in the Database of Primitive Groups in {\sc Magma} \cite{Mag}. For example, $\mathcal{P}(2^4,17)=2^4{:}{\rm Sp}_4(2)'$. Additionally, we write $\Gamma{\rm L}_m(q)$ for the general semilinear group of dimension $m$ over $\mathbb{F}_q$, where $q$ is a $p$-power.

\begin{theorem}\label{t:mainaffine}
Let $G=V{:}H$ be a 2-transitive affine group such that $|V|=n=p^d$ with $p$ prime and $d\geqs 1$. Then $G$ is almost elusive if and only if one of the following holds
\begin{itemize}
\item[{\rm (i)}] $H\leqs\Gamma{\rm L}_1(p^d)$.
\item[{\rm (ii)}] ${\rm SL}_2(q)\trianglelefteqslant H\leqs \Gamma{\rm L}_2(q)$, where $p=2$, $d$ is even and $q=2^{d/2}$.
\item[{\rm (iii)}] $G=\mathcal{P}(n,i)$, where $(n,i)$ is contained in Table \ref{tab:affine}.
\end{itemize}
\end{theorem}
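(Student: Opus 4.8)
The plan is first to identify the prime‑order derangements explicitly. Write an element of $G=V{:}H$ as a pair $(v,h)$, $v\in V$, $h\in H$, acting on $\Omega=V$ by $w\mapsto h(w)+v$. Then: (a) the nonzero translations $(v,1)$ all have order $p$ and form a single $G$-class, using that $H$ is transitive on $V\setminus\{0\}$; so $G$ always has at least one class of prime‑order derangements. (b) For a prime $r\ne p$ and $h\in H$ of order $r$, the map $1-h$ is invertible on $V$, so $(v,h)$ is $G$-conjugate (by a translation) to $(0,h)$, which fixes $0$; hence there are no derangements of order $r\ne p$. (c) If $h\in H$ has order $p$ and $h\ne1$, then, using the identity $1+x+\cdots+x^{p-1}=(x-1)^{p-1}$ in $\mathbb{F}_p[x]$, the element $(v,h)$ has order $p$ exactly when $v\in\ker((h-1)^{p-1})$, and it is then a derangement exactly when $v\notin\mathrm{im}(h-1)$; such an element is never $G$-conjugate to a translation since its $H$-part is $\ne1$. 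Decomposing $V$ into Jordan blocks for the unipotent element $h$, one checks that $\ker((h-1)^{p-1})\subseteq\mathrm{im}(h-1)$ if and only if every Jordan block of $h$ on $V$ has size $p$, equivalently $\dim_{\mathbb{F}_p}C_V(h)=d/p$ (the minimum possible value). Putting this together: $G$ is almost elusive if and only if every element of $H$ of order $p$ acts on $V$ with all Jordan blocks of size $p$. In particular, if $p\nmid|H|$ then $G$ is automatically almost elusive, and otherwise $p\mid d$ is forced.

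The remaining task is to run this criterion through Hering's classification \cite{Hering} of the $2$-transitive affine groups, i.e.\ of the transitive linear groups $H\leqs{\rm GL}_d(p)$: the possibilities are $H\leqs\Gamma{\rm L}_1(p^d)$; $\,{\rm SL}_a(q)\trianglelefteqslant H\leqs\Gamma{\rm L}_a(q)$ with $q^a=p^d$, $a\geqs2$; $\,{\rm Sp}_a(q)\trianglelefteqslant H$ with $a$ even and $a\geqs4$; $\,G_2(q)'\trianglelefteqslant H$ with $q$ even and $q^6=p^d$; and a short explicit list of exceptional $(H,V)$ with $p^d$ small. For the ${\rm Sp}_a(q)$ and $G_2(q)'$ cases one shows the criterion fails: using standard information on unipotent elements, these groups contain an element of order $p$ (a transvection, respectively a long root element) having a Jordan block of size less than $p$ on the natural module, so $G$ is not almost elusive. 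For ${\rm SL}_a(q)$ a transvection $t$ has $\dim_{\mathbb{F}_p}C_V(t)=(a-1)\log_p q$, which equals $d/p=a\log_p q/p$ only when $p=2$ and $a=2$; so the criterion fails unless $p=2$ and $a=2$. In the surviving subcase ${\rm SL}_2(q)\trianglelefteqslant H\leqs\Gamma{\rm L}_2(q)$ with $q=2^{d/2}$ one verifies that every involution of $H$ has $\dim_{\mathbb{F}_2}C_V=d/2$: the involutions inside ${\rm GL}_2(q)$ are transvections, and an involution of $\Gamma{\rm L}_2(q)\setminus{\rm GL}_2(q)$ (which can occur only when $4\mid d$) has the form $g\phi^{d/4}$ with $\phi$ the Frobenius of $\mathbb{F}_q$, and is ${\rm GL}_2(q)$-conjugate to $\phi^{d/4}$ by Hilbert's Theorem $90$, hence fixes an $\mathbb{F}_2$-subspace of dimension $2\cdot(d/4)=d/2$. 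This gives part (ii). For the $\Gamma{\rm L}_1(p^d)$ case, let $h\in H$ have order $p$; its image in $\Gamma{\rm L}_1(p^d)/\mathbb{F}_{p^d}^\ast\cong\Z/d$ must have order $p$ (otherwise $h$ is a scalar of $p$-power order, hence trivial), so $h=\l\psi$ where $\psi$ is the unique order-$p$ field automorphism of $V=\mathbb{F}_{p^d}$ over $\mathbb{F}_{p^{d/p}}$ and $\l\in\mathbb{F}_{p^d}^\ast$ with $N_{\mathbb{F}_{p^d}/\mathbb{F}_{p^{d/p}}}(\l)=1$; since $\psi$ fixes $\mathbb{F}_{p^{d/p}}$ pointwise, $h$ is $\mathbb{F}_{p^{d/p}}$-linear, and a short computation with Hilbert $90$ shows $C_V(h)$ is a one-dimensional $\mathbb{F}_{p^{d/p}}$-subspace, so $\dim_{\mathbb{F}_p}C_V(h)=d/p$. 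Hence the criterion always holds here, giving part (i). Finally the finitely many exceptional configurations are settled by direct computation in {\sc Magma} \cite{Mag} — testing, for each, whether every element of order $p$ has fixed-point dimension $d/p$ — which produces the list in Table \ref{tab:affine} and hence part (iii).

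The main obstacle is the treatment of the two generic families (i) and (ii): there $H$ may contain elements of order $p$ that mix a linear part with a field automorphism, and the Jordan structure of such elements on $V$ is not transparent. The key technical device in both cases is to pass to a cyclic field extension ($\mathbb{F}_{p^d}/\mathbb{F}_{p^{d/p}}$ in the $\Gamma{\rm L}_1$ case, $\mathbb{F}_q/\mathbb{F}_{q^{1/2}}$ in the ${\rm SL}_2$ case) and apply Hilbert's Theorem $90$ to pin down the dimension of the fixed space exactly. By contrast, once the criterion of the first step is established, the ${\rm Sp}$, $G_2$ and exceptional cases are routine.
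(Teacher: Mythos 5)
Your proposal is correct in substance, and its skeleton is the same as the paper's: the explicit criterion for when $(v,h)$ is an order-$p$ derangement (the paper's Lemma \ref{l:dercond}), the reduction to the statement that $G$ is almost elusive precisely when every order-$p$ element of $H$ has Jordan form $[J_p^{d/p}]$ on $V$ (Corollary \ref{c:ae}), a run through Hering's theorem, transvections and long root elements to eliminate ${\rm SL}_a$ with $(a,p)\neq(2,2)$, ${\rm Sp}_a$ and $G_2(q)'$, and {\sc Magma} for the sporadic cases V--IX. Where you genuinely diverge is in the hardest step, namely the order-$p$ elements involving a field automorphism in cases (i) and (ii). The paper shows every such element is conjugate to a pure field automorphism $(1,\psi)$ — by a class-size counting argument in the $\Gamma{\rm L}_1$ case (Lemma \ref{l:gamma1}) and by Shintani descent for $\Gamma{\rm L}_2$ (Lemma \ref{l:gaml2}) — and then computes the Jordan form of $(1,\psi)$ via the normal basis theorem (Lemmas \ref{l:nbt} and \ref{l:fieldjf}). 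You instead compute fixed spaces directly: multiplicative Hilbert 90 gives that $C_V(\lambda\psi)$ is a $1$-dimensional $\mathbb{F}_{p^{d/p}}$-space, and the vanishing $H^1({\rm Gal}(\mathbb{F}_q/\mathbb{F}_{2^{d/4}}),{\rm GL}_2(\mathbb{F}_q))=1$ conjugates $g\phi^{d/4}$ to $\phi^{d/4}$. Both routes are valid; yours is shorter and avoids Shintani descent and the counting lemma, at the cost of invoking Galois-cohomological vanishing rather than elementary bookkeeping, and since you phrase the criterion via $\dim C_V(h)=d/p$ you also bypass the explicit base-change Lemma \ref{l:jf1}.

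Two small repairs. First, your justification in step (b) that $1-h$ is invertible on $V$ for every $h\in H$ of prime order $r\neq p$ is false in general ($h$ may have eigenvalue $1$, e.g.\ ${\rm diag}(1,\zeta)\in{\rm GL}_2(p)$); the conclusion you want is immediate anyway, since a fixed-point-free element of prime order $r$ has all its cycles of length $r$ on $\Omega=V$, so $r$ divides $p^d$ and hence $r=p$ — this is exactly how the paper dismisses all orders other than $p$. (Alternatively, for $r\neq p$ the coprime decomposition $V=C_V(h)\oplus[V,h]$ shows the order-$r$ condition forces $v\in{\rm im}(h-1)$.) Second, in the $\Gamma{\rm L}_1$ analysis, ``the unique order-$p$ field automorphism over $\mathbb{F}_{p^{d/p}}$'' should read ``a generator of ${\rm Gal}(\mathbb{F}_{p^d}/\mathbb{F}_{p^{d/p}})$'' when $p$ is odd, since that Galois group has $p-1$ elements of order $p$; the ensuing Hilbert 90 computation is unaffected. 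Neither point damages the argument.
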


\begin{table}[h]
\[
\begin{array}{lllll} \hline
n & i & &n & i \\  \hline\rule{0pt}{2.5ex} 
2^4 &17,\,19 & & 11^2 & 36,\, 38,\, 42,\, 43,\, 44 \\
3^4 & 44,\,68,\,69,\,70,\,90,\,99 & & 19^2 & 73,\,80 \\
3^6 & 145,198,239,240,366 &  & 23^2 &49,\,51\\
5^2& 12,\, 14,\, 17, \, 19 &  & 29^2 & 97,\, 103,\, 104 \\
7^2 &22,\,23,\,29 & & 59^2 &79,\,84 \\
\hline
\end{array}
\]
\caption{The almost elusive affine groups $G=\mathcal{P}(n,i)$}
\label{tab:affine}
\end{table}

By combining Theorems \ref{t:main1} and \ref{t:mainaffine} with the main results in \cite{BHall,Hall}, we now complete the classification of the almost elusive primitive groups (see Section \ref{s:tables} for Tables \ref{tab:fullastab1} and \ref{tab:fullastab2}).

\begin{theorem}\label{t:fullaeprim}
Let $G\leqs {\rm Sym}(\Omega)$ be a finite primitive permutation group with point stabiliser $H$. Then $G$ is almost elusive if and only if either 
\begin{itemize}
\item[{\rm (i)}] $G$ is almost simple and $(G,H)$ is contained in Tables \ref{tab:fullastab1} or \ref{tab:fullastab2}; or
\item[{\rm (ii)}] $G=V{:}H$ is a 2-transitive affine group as described in Theorem \ref{t:mainaffine}.
\end{itemize} 
\end{theorem}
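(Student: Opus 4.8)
The plan is to assemble Theorem \ref{t:fullaeprim} from the results already established, so the argument is essentially a structural reduction followed by a bookkeeping appeal to the classification of finite simple groups. First I would note that every primitive permutation group is quasiprimitive, so \cite[Theorem 1]{BHall} applies directly: a primitive almost elusive group $G$ is either almost simple or a 2-transitive affine group $V{:}H$ with $V=(\mathbb{F}_p)^d$. This splits the proof into the two branches corresponding to parts (i) and (ii) of the statement, and in each branch the converse implication (that the groups appearing really are almost elusive) is already contained in the cited source.

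For the affine branch there is nothing further to do: a 2-transitive affine group is automatically primitive, the 2-transitivity forces $H$ to act transitively on $V\setminus\{0\}$, and the characterisation of the almost elusive such groups is exactly Theorem \ref{t:mainaffine}. So part (ii) follows immediately.

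For the almost simple branch I would let $G_0={\rm soc}(G)$ and run through the possibilities for $G_0$ afforded by the classification of finite simple groups. If $G_0$ is an alternating group, a sporadic group, or a rank one group of Lie type, the primitive almost elusive pairs $(G,H)$ were determined in \cite{BHall}; if $G_0$ is a classical group of larger rank, they were determined in \cite{Hall}; and if $G_0$ is an exceptional group of Lie type, they are given by Theorem \ref{t:main1}. Here I would take care to handle the small groups excluded from Theorem \ref{t:main1}, namely those with socle $G_2(2)'$ or ${}^2G_2(3)'$, via the isomorphisms $G_2(2)'\cong\Un_3(3)$ and ${}^2G_2(3)'\cong\Li_2(8)$, so that they are accounted for under the classical and rank one headings respectively and no case is double counted. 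Collecting the outputs of these three sources produces exactly the entries of Tables \ref{tab:fullastab1} and \ref{tab:fullastab2}, giving part (i).

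The only genuine work lies in the bookkeeping: verifying that the union of the three source classifications is exhaustive (every finite simple group is of one of the listed types) and, after the isomorphism identifications, non-overlapping, and that the tables in Section \ref{s:tables} transcribe the conclusions of \cite{BHall}, \cite{Hall} and Theorem \ref{t:main1} with a consistent choice of representative for each pair $(G,H)$. I do not expect any new mathematical obstacle to arise; once these citations are aligned the proof is complete.
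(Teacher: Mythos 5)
Your proposal is correct and matches the paper's approach: the paper proves Theorem \ref{t:fullaeprim} exactly by noting that a primitive group is quasiprimitive, invoking \cite[Theorem 1]{BHall} to reduce to the almost simple and 2-transitive affine cases, and then combining Theorem \ref{t:main1}, Theorem \ref{t:mainaffine} and the main results of \cite{BHall,Hall}, with the socles $G_2(2)'$ and ${}^2G_2(3)'$ absorbed via the isomorphisms with $\Un_3(3)$ and $\Li_2(8)$. The only content beyond the cited results is the bookkeeping in the tables, just as you describe.
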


Let us now extend the classification in Theorem \ref{t:fullaeprim} from primitive groups to quasiprimitive groups. To do this, we need to determine the pairs $(G,H)$, where $G$ is an almost simple group with socle $G_0$, $H$ is a core-free non-maximal subgroup of $G$ with $G=G_0H$ and $G$ is almost elusive with respect to the natural action of $G$ on the cosets of $H$. We direct the reader to Section \ref{s:tables} for the relevant tables.

\begin{theorem}\label{t:mainquasi}
Let $G$ be an almost simple group with socle $G_0$ and let $H$ be a core-free non-maximal subgroup of $G$ such that $G=G_0H$. Then $(G,H)$ is almost elusive only if one of the following holds:
\begin{itemize}
\item[{\rm (i)}] $G_0=\Un_n(q)$ and $H$ stabilises a 1-dimensional non-degenerate subspace of the natural module, where $q$ is even and $n\geqs 5$ is a prime divisor of $q + 1$.
\item[{\rm (ii)}] $G_0=\Li_2(p)$ with $p\geqs 5$ prime and $(G,H)$ is recorded in Table \ref{tab:imprimitiveAE}.
\item[{\rm (iii)}] $(G,H)$ is recorded in Table \ref{tab:QuasiInA}.
\end{itemize}
\end{theorem}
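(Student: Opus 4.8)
\emph{Reduction to the primitive case.} Suppose $(G,H)$ is almost elusive, where $H$ is core-free and non-maximal in $G$ and $G = G_0H$. If $H \leqs K < G$, then the natural surjection $G/H \to G/K$ is $G$-equivariant, so any fixed point of $g \in G$ on $G/H$ maps to a fixed point of $g$ on $G/K$; hence a derangement of prime order on $G/K$ is also a derangement of prime order on $G/H$, and since $(G,H)$ is almost elusive it follows that $(G,K)$ is elusive or almost elusive. Moreover $G = G_0H \leqs G_0K \leqs G$ forces $G = G_0K$, so $G_0 \not\leqs K$ and the action of $G$ on $G/K$ is faithful (indeed quasiprimitive). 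Applying this with $K = M$ a maximal overgroup of $H$, we deduce that $(G,M)$ is a primitive permutation group which is elusive or almost elusive. By Theorem \ref{t:fullaeprim} the almost-elusive possibilities for $(G,M)$ are exactly the pairs in Tables \ref{tab:fullastab1} and \ref{tab:fullastab2}; and by Giudici's theorem \cite{G} on elusive groups with a transitive minimal normal subgroup (which applies since the socle $G_0$ is such a subgroup of the primitive group $(G,M)$), the only elusive primitive almost simple group is $({\rm M}_{11},\Li_2(11))$ of degree $12$. This confines $G$, and the isomorphism type and action of the maximal overgroups of $H$, to a short list of (families of) pairs.

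\emph{A pruned descent.} Fix such an $M$ and a maximal chain $H = K_0 < K_1 < \cdots < K_r = G$ with $K_{r-1} = M$ and $r \geqs 2$. By the reduction, each $K_i$ with $i < r$ is core-free, satisfies $G = G_0K_i$, and is elusive or almost elusive. We therefore descend from $M$: enumerate maximal subgroups, discard any that are not core-free, fail $G = G_0K$, or are neither elusive nor almost elusive, and recurse on the rest. The process terminates quickly, since a subgroup that is neither elusive nor almost elusive can be discarded together with its whole subgroup lattice (an almost-elusive subgroup would force all of its proper overgroups to be elusive or almost elusive), and in practice already the maximal subgroups of $M$ are usually neither. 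Whenever the descent reaches a subgroup $H$ at depth $\geqs 1$, hence non-maximal in $G$, for which $(G,H)$ is almost elusive, we record it; deciding this reduces to determining which conjugacy classes of prime-order elements of $G$ meet $H$ (equivalently, have fixed points on $G/H$) and checking that exactly the deranged class of the primitive pair heading the branch — or, on the elusive branch, exactly one additional class — is deranged on $G/H$.

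\emph{Carrying out the case analysis.} For the finitely many sporadic entries of Tables \ref{tab:fullastab1} and \ref{tab:fullastab2}, and for $({\rm M}_{11},\Li_2(11))$, the descent is a finite computation using known maximal-subgroup and character-table data together with {\sc Magma} \cite{Mag}, and produces Table \ref{tab:QuasiInA} and part (iii). The infinite families are handled uniformly in their parameters. For $G_0 = \Li_2(p)$ with $p \geqs 5$ prime one uses the classical subgroup structure of $\Li_2(p)$ and ${\rm PGL}_2(p)$ — Borel subgroups, dihedral subgroups, and $A_4$, $S_4$, $A_5$ — together with an analysis of which primes $\ell$ yield $\ell$-element derangements in the relevant coset actions; this gives part (ii) and Table \ref{tab:imprimitiveAE}. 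For $G_0 = \Un_n(q)$ the only surviving candidate for $H$ is the stabiliser of a non-degenerate $1$-space of the natural module, and the hypotheses that $q$ be even and $n \geqs 5$ be a prime divisor of $q+1$ emerge exactly from requiring $H$ to be non-maximal in $G$ and $(G,H)$ to be almost elusive, the latter checked by a fixed-point-ratio/fusion computation in $\Un_n(q)$; this gives part (i).

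\emph{Main obstacle.} The reduction step and the sporadic entries are essentially routine; the substance of the proof is the uniform treatment of the infinite families, and above all the unitary case, where one must pin down precisely when the non-degenerate $1$-space stabiliser is non-maximal in $G$, identify the maximal subgroup containing it, and carry out the prime-order fusion analysis in $\Un_n(q)$ as a function of $n$ and $q$.
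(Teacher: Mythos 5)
Your reduction step is exactly the paper's (Lemmas \ref{l:core-free} and \ref{l:reducquas}): every maximal overgroup $M$ of $H$ is core-free, $(G,M)$ must be elusive or almost elusive, Giudici's theorem disposes of the elusive branch via $({\rm M}_{11},\Li_2(11))$, and one is left descending from the pairs in Tables \ref{tab:fullastab1} and \ref{tab:fullastab2}. The pruning principle and the {\sc Magma} descent for the finitely many sporadic entries also match the paper (Proposition \ref{p:smalldimsquasi}). However, your case analysis of the infinite families has a genuine gap: you never address the alternating-group families A1--A5 of Table \ref{tab:fullastab1}, i.e.\ $G=A_n$ or $S_n$ with $M=A_{n-1}$, $S_{n-1}$ or $S_{n-2}\times S_2$ and $n$ of the form $r^a$, $2r^a$, $2^m$ or $2^m\pm1$. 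These are infinite families, they are not dispatched by computation, and showing they contribute nothing for $n>20$ is a substantial part of the paper's proof: one first forces $\pi(M_0)=\pi(H_0)$ (Lemma \ref{l:piMpiH}), invokes \cite[Corollary 5]{LPS} to conclude $H$ acts intransitively on the $n-1$ (or $n-2$) moved points, so $H$ is a partition stabiliser $(S_k\times S_{n-1-k})\cap M$, and then produces a second derangement class by exhibiting, via \cite[Proposition 3.6]{BHall} and Lemma \ref{l:btv}, a prime $s\neq r$ dividing $n-1-t$ for some $t<k$ and an element of cycle shape $[s^{(n-1-t)/s},1^{t+1}]$ missing $H$. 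None of this is in your proposal, and without it the statement's conclusion (which lists no alternating families beyond the sporadic entries of Table \ref{tab:QuasiInA}) is unproved.

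You have also misplaced the difficulty in the unitary case. In conclusion (i), $H$ is a \emph{proper} subgroup of the non-degenerate $1$-space stabiliser $M$ (which is maximal, by Aschbacher's theorem, whenever case U1 of Table \ref{tab:fullastab1} applies); there is no question of ``when the $1$-space stabiliser is non-maximal,'' and the conditions on $n$ and $q$ are not extracted from a fusion computation in $\Un_n(q)$ -- they are simply inherited from case U1 of the primitive classification, since any $H<M$ automatically stabilises the $1$-space. The paper does essentially no work here beyond Remark \ref{r:caseU1} (indeed no genuine examples are expected to exist at all). By contrast, the $\Li_2(p)$ families do require real work, but of a more specific kind than you indicate: by Lemma \ref{l:piMpiH} the only overgroups $M$ to consider are Borel and dihedral subgroups, so $H$ is a subgroup of a metacyclic Frobenius group or of a dihedral group ($A_4$, $S_4$, $A_5$ never arise), and the argument turns on the condition $\alpha(H_0)=\alpha(M_0)$ together with the involution/order-$3$ class structure of $\Li_2(p)$ versus ${\rm PGL}_2(p)$ and the fact that powers of a semisimple element meet every class of that order (Corollary \ref{c:L2pOddDer}).
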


\begin{remark} \label{r:quasirmk}
Here we provide some remarks on Theorem \ref{t:mainquasi}.
\begin{itemize}
\item[{\rm (a)}] {Suppose that $(G,H)$ is as in case (i) of Theorem \ref{t:mainquasi} and write $H<M$, where $M$ is the stabiliser of a 1-dimensional non-degenerate subspace of the natural module. Then $(G,M)$ arises in case U1 of Table \ref{tab:fullastab1}, with the relevant conditions on $n$ and $q$ recorded in Remark \ref{r:rmkfullastab1}(b).
As discussed in \cite[Section 5.2.3]{Hall}, we anticipate that no genuine almost elusive examples arise in this case (that is, no examples which satisfy all the number-theoretic constraints), which would allow us to remove case (i) in Theorem \ref{t:mainquasi}. We refer the reader to Remark \ref{r:caseU1} for further information about quasiprimitive groups in this case. }
\item[{\rm (b)}] For part (ii), if $(G,H)$ is a case recorded in Table \ref{tab:imprimitiveAE}, then $(G,K)$ is almost elusive for any subgroup $K$ of $G$ isomorphic to $H$. See Remarks \ref{r:L1L2rmk} and \ref{r:L4L5rmk} for more details. 
\item[{\rm (c)}] Let $(G,H)$ be any of the cases recorded in Table \ref{tab:QuasiInA}. Then $G$ has a subgroup $K$ with $H\cong K$ such that $(G,K)$ is almost elusive. In the table we record the total number of $G$-classes of subgroups isomorphic to $H$ such that $G=G_0H$, together with the number of these $G$-classes that give almost elusive examples. See Remark \ref{r:main} for more information on Table \ref{tab:QuasiInA}. 
\end{itemize}
\end{remark}

Take $G\leqs{\rm Sym}(\Omega)$ to be a quasiprimitive permutation group with point stabiliser $H$. Assume $G$ is an almost simple group with socle $G_0$, an alternating or sporadic group, such that $G$ is not elusive (that is $(G,n)\neq({\rm M}_{11},12)$). Let $r$ denote the largest prime divisor of $|\Omega|$.  In \cite[Corollary 1.2]{BGW}, Burness, Giudici and Wilson prove that if $G$ is primitive then $G$ contains a derangement of order $r$. By inspecting the cases that arise in Theorems \ref{t:fullaeprim} and \ref{t:mainquasi}, we can establish the following extension. As in Theorem \ref{t:mainquasi} the case in which $G_0=\Un_n(q)$ and $H$ stabilises a 1-dimensional non-degenerate subspace of the natural module arises as a special case in Corollary \ref{c:largestprim}. See Section \ref{s:corollaries} for the proof.

\begin{corol}\label{c:largestprim}
Let $G\leqs{\rm Sym}(\Omega)$ be a quasiprimitive almost elusive permutation group with socle $G_0$ and point stabiliser $H$. Assume $G$ has derangements of prime order $s$. Then either $s$ is the largest prime divisor of $|\Omega|$ or one of the following holds
\begin{itemize}
\item[{\rm (i)}] $G$ is primitive, $(s,r)=(2,3)$ and $(G,H)=({}^2F_4(2)',\Li_2(25))$ or $({}^2F_4(2),\Li_2(25).2_3)$.
\item[{\rm (ii)}] $G$ is imprimitive and one of the following holds
\begin{itemize}
\item[{\rm (a)}] $G_0=\Un_n(q)$ and $H$ is properly contained in the stabiliser of a 1-dimensional non-degenerate subspace of the natural module, where $q$ is even and $n\geqs 5$ is a prime divisor of $q+1$.
\item[{\rm (b)}] $s=2$ and $(G,H)$ is as in case II or III of Table \ref{tab:imprimitiveAE}.
\item[{\rm (c)}] $s=3$ and $(G,H)=(\Li_2(p),C_p{:}C_d)$ is as in case IV of Table \ref{tab:imprimitiveAE} such that $\frac{(p-1)}{d}$ is divisible by an odd prime. 
\end{itemize}
\end{itemize}
\end{corol}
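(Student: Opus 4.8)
The plan is to leverage the full classification provided by Theorems \ref{t:fullaeprim} and \ref{t:mainquasi}, together with the primitive case result of Burness--Giudici--Wilson \cite[Corollary 1.2]{BGW}, and simply check, case by case, whether the prime order $s$ of the derangements coincides with the largest prime divisor $r$ of $|\Omega|$. So the proof is essentially a finite verification over the tables, with the genuine mathematical content concentrated in the imprimitive cases and in the few small primitive exceptions.

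First I would dispose of the primitive case. By Theorem \ref{t:fullaeprim}, if $G$ is primitive and almost elusive then $(G,H)$ is either one of the finitely many almost simple examples in Tables \ref{tab:fullastab1} and \ref{tab:fullastab2}, or a $2$-transitive affine group as in Theorem \ref{t:mainaffine}. For the affine groups, $|\Omega| = p^d$ is a prime power, so $r = p$; one checks directly (e.g.\ using Hering's classification and the description of $H$ acting transitively on nonzero vectors) that the unique class of derangements of prime order has order $p$, so $s = r$ and there is no exception. For the affine groups in part (iii) of Theorem \ref{t:mainaffine} this can be confirmed directly in {\sc Magma}. For the almost simple primitive examples, one goes through Tables \ref{tab:fullastab1} and \ref{tab:fullastab2}: for each pair $(G,H)$ one knows the unique prime $s$ for which a class of derangements exists (this is recorded, or easily recomputed, in the analysis underlying \cite{BHall,Hall} and Theorem \ref{t:main1}), and one compares it with $r$, the largest prime dividing $|G:H|$. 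The only cases where $s \ne r$ turn out to be the two groups with socle ${}^2F_4(2)'$ listed in Theorem \ref{t:main1}, namely $({}^2F_4(2)',\Li_2(25))$ and $({}^2F_4(2),\Li_2(25).2_3)$, where $|\Omega|$ has largest prime divisor $r = 3$ but $s = 2$; this yields exactly case (i). (For the alternating and sporadic socle cases this is immediate from \cite[Corollary 1.2]{BGW}, which already guarantees a derangement of order $r$ in the primitive setting, forcing $s = r$ by almost elusivity.)

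Next I would treat the imprimitive (genuinely quasiprimitive, non-primitive) case using Theorem \ref{t:mainquasi}. There are three families. In case (i), $G_0 = \Un_n(q)$ with $q$ even and $n \geq 5$ a prime divisor of $q+1$, and $H$ is properly contained in the stabiliser $M$ of a nondegenerate $1$-space; here one shows, using the structure of $M$ and the index $|G:H|$, that the relevant derangement prime $s$ need not be the largest prime divisor of $|\Omega|$, which gives case (ii)(a). For case (ii) of Theorem \ref{t:mainquasi}, the pairs $(\Li_2(p), H)$ with $p \geq 5$ prime are listed in Table \ref{tab:imprimitiveAE}; for each row one computes $|\Omega| = |G:H|$, factorises it, identifies $r$, and identifies the derangement prime $s$ (recorded via Remarks \ref{r:L1L2rmk} and \ref{r:L4L5rmk}). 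The rows where $s = r$ contribute nothing; the rows labelled II and III contribute case (ii)(b) with $s = 2$, and the row labelled IV, with the stated divisibility condition on $(p-1)/d$ ensuring the largest prime divisor of $|\Omega|$ exceeds $3$, contributes case (ii)(c) with $s = 3$. Finally, for case (iii) of Theorem \ref{t:mainquasi}, one checks every row of Table \ref{tab:QuasiInA} directly (this is a finite {\sc Magma} computation, since all such groups have small degree), and verifies that in every almost elusive example there $s$ is the largest prime divisor of $|\Omega|$, so no further exceptions arise.

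The main obstacle will be the bookkeeping in the imprimitive $\Un_n(q)$ case (ii)(a) and in the $\Li_2(p)$ cases: one must pin down precisely which prime $s$ supports the unique class of derangements of prime order, and compare it against the largest prime divisor of an index $|G:H|$ whose factorisation depends on number-theoretic conditions on $q$ (or $p$). For $\Un_n(q)$ this rests on the analysis in \cite[Section 5.2.3]{Hall} and \cite[Remark 4.x]{Hall} describing the cycle structure of semisimple and unipotent elements on the cosets of $H$; the delicate point is that $|\Omega|$ can acquire large prime divisors from the ``extra'' index $|M:H|$ that are not relevant to derangements, which is exactly the phenomenon producing the exception. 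For the $\Li_2(p)$ rows the subtlety is the condition in (ii)(c): one must verify that $(p-1)/d$ being divisible by an odd prime is equivalent to $r > 3 = s$, while otherwise $|\Omega|$ is (up to the factor accounting for $C_p$) a power of $2$ times small factors and $s = r$. Once these two points are handled the remaining verifications are routine table inspection and small-degree computation, and assembling them gives the stated dichotomy. $\qed$
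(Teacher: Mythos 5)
Your proposal is correct and follows essentially the same route as the paper: reduce via Theorems \ref{t:fullaeprim} and \ref{t:mainquasi}, dispose of the affine case trivially, verify the sporadic cases in Tables \ref{tab:fullastab2} and \ref{tab:QuasiInA} computationally, and compare $s$ with the largest prime divisor of $|G:H|$ directly for the infinite families in Tables \ref{tab:fullastab1} and \ref{tab:imprimitiveAE}, with the $\Un_n(q)$ non-degenerate $1$-space case simply carved out as (ii)(a). Your appeal to \cite[Corollary 1.2]{BGW} for the alternating and sporadic socle cases is a harmless shortcut for computations the paper does directly, and the key verification you flag for (ii)(c) (that $3\nmid p-1$, so an odd prime divisor of $(p-1)/d$ forces $r>3$) is exactly the point the paper makes.
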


Let $G$ be an almost simple group with socle $G_0$ and let $H$ be a core-free subgroup of $G$ such that $G=G_0H$ and $(G,H)$ is almost elusive. We define the \emph{depth of $H$}, denoted $d_G(H)$, to be the longest possible chain of subgroups
\begin{equation}\label{e:gchain}
 G>L_1>\dots>L_{\ell-1}>L_\ell=H,
\end{equation}
such that $(G,L_i)$ is almost elusive for all $1\leqs i \leqs \ell$. Here we refer to $\ell$ as the length of the chain in \eqref{e:gchain}. We define the \emph{almost elusive depth} of $G$ to be 
\[
D_G=\max d_G(H)
\]
where we take the maximum over all core-free subgroups $H$ of $G$ such that $G=G_0H$ and $(G,H)$ is almost elusive. 
In the following corollary we let $\omega(n)$ and $\pi(n)$ denote the total number of prime divisors  and the number of distinct prime divisors of a positive integer $n$, respectively. 
The proof is presented in Section \ref{s:corollaries} and we refer the reader to Remark \ref{r:main} for more information the groups that arise.

\begin{corol}\label{c:depth}
Let $G$ be an almost simple group with socle $G_0$ and set $k=|G:G_0|$. If $D_G\geqs 2$ then one of the following holds:
\begin{itemize}
\item[{\rm (i)}] $G_0=\Un_n(q)$ where $q$ is even and $n\geqs 5$ is a prime divisor of $q+1$. 
\item[{\rm (ii)}] $D_G = \omega(k(p-1)/2)-\pi(k(p-1)/2) +1$ and $G_0=\Li_2(p)$, where $p=2^m-1$ is a prime.
\item[{\rm (iii)}] $D_G = \omega((p-1)/2)-\pi((p-1)/2) +1$ and $G=\Li_2(p)$, where $p=2.3^a-1$ is a prime with $a\geqs 2$.
\item[{\rm (iv)}] $D_G = \omega(p+1)-\pi(p+1)+1$ and $G={\rm PGL}_2(p)$ where $p=2^m+1$ is a prime.
\item[{\rm (v)}] $D_G=2$ and $G={\rm M}_{10},A_9,S_9,\Li_2(8).3, \Un_5(2).2$ or ${\rm PSp}_6(2)$.
\item[{\rm (vi)}] $D_G=3$ and $G= \Li_2(49).2_3$.
\item[{\rm (vii)}] $D_G=4$ and $G={\rm U}_4(2)$, ${\rm U}_4(2).2$ or ${\rm U}_3(3).2$.
\end{itemize}
\end{corol}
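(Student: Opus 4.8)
The proof will be a bookkeeping argument on top of the complete classification of almost elusive pairs $(G,H)$ with $G$ almost simple: Theorem~\ref{t:fullaeprim} records those with $H$ maximal (equivalently, those with $G$ primitive), and Theorem~\ref{t:mainquasi} records the remaining cases, in which $H$ is non-maximal. The plan is first to reduce to these lists and then to compute $D_G$ group by group. The key preliminary observation is a monotonicity property of derangements: if $H\leqs K$ are core-free subgroups of $G$ then $g^G\cap K=\emptyset$ implies $g^G\cap H=\emptyset$, so every prime-order derangement of $G$ on $G/K$ is also one on $G/H$, and hence the number of $G$-classes of prime-order derangements does not increase when the point stabiliser is enlarged. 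Consequently, if $(G,L_1)$ is almost elusive and $M$ is a maximal subgroup with $L_1\leqs M$, then $(G,M)$ has at most one class of prime-order derangements, so $(G,M)$ is either almost elusive or elusive. By Giudici's theorem (see the introduction), the only almost simple elusive action with $G=G_0H$ is that of $\Ma_{11}$ on $12$ points, and $\Ma_{11}$ does not appear in Theorem~\ref{t:mainquasi} (so it has no non-maximal almost elusive subgroup with $G=G_0H$); it is therefore excluded. Hence in a longest chain \eqref{e:gchain} we may assume $L_1$ is maximal, so $(G,L_1)$ appears in Tables~\ref{tab:fullastab1} or~\ref{tab:fullastab2}, while $L_2,\dots,L_\ell$ are non-maximal and so appear in Theorem~\ref{t:mainquasi}.

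With this reduction it remains, for each $G$ occurring in the primitive tables that has a proper almost elusive subgroup, to describe the poset (under inclusion, up to $G$-conjugacy) of core-free subgroups $H$ with $G=G_0H$ and $(G,H)$ almost elusive, and to read off the length of a longest chain, which is $D_G$. For the finitely many ``sporadic'' cases — those yielding $G\in\{\Ma_{10},A_9,S_9,\Li_2(8).3,\Un_5(2).2,{\rm PSp}_6(2)\}$ in part~(v), $G=\Li_2(49).2_3$ in part~(vi), and $G\in\{\Un_4(2),\Un_4(2).2,\Un_3(3).2\}$ in part~(vii) — this is a finite computation: with the aid of {\sc Magma} one lists the relevant subgroups and their containments, checks the almost elusive condition in each case using the same criterion for counting classes of prime-order derangements employed in the proofs of Theorems~\ref{t:fullaeprim} and~\ref{t:mainquasi}, and thereby obtains the stated exact values of $D_G$. (Remark~\ref{r:main} and Tables~\ref{tab:imprimitiveAE}, \ref{tab:QuasiInA} already contain most of the data needed here.)

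The substance of the proof is the two infinite families. When $G_0=\Un_n(q)$ with $q$ even and $n\geqs5$ a prime divisor of $q+1$, every proper almost elusive subgroup lies in the stabiliser $M$ of a $1$-dimensional non-degenerate subspace of the natural module, and is of the type described in Theorem~\ref{t:mainquasi}(i); one analyses the relevant part of the subgroup lattice of $M$ to determine which descending chains $M>L_2>\cdots$ have all terms almost elusive, giving case~(i). When $G_0=\Li_2(p)$ the almost elusive subgroups are the Borel-type subgroups $C_p{:}C_d$ — the point stabiliser for the degree-$(p+1)$ action and its ``reductions'' — together with the dihedral subgroups recorded in Table~\ref{tab:imprimitiveAE}. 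A chain of Borel-type subgroups corresponds to a chain of divisors $d_1\mid d_2\mid\cdots$, and the almost elusive constraint established in the proof of Theorem~\ref{t:mainquasi} restricts which divisors of $m$ may occur, where $m=k(p-1)/2$ for the Borel chain (so $m=(p-1)/2$ when $G=\Li_2(p)$ and $m=p+1$ when the governing chain is dihedral and $G={\rm PGL}_2(p)$). Writing $m=\prod_ip_i^{a_i}$, the longest admissible divisor chain has length $1+\sum_i(a_i-1)=\omega(m)-\pi(m)+1$, which is $\geqs2$ precisely when $m$ fails to be squarefree; translating this non-squarefreeness back into a condition on $p$ forces one of the shapes $p=2^m-1$, $p=2\cdot3^a-1$ or $p=2^m+1$ in parts~(ii), (iii) and~(iv), and one checks that adjoining the dihedral subgroups does not lengthen these chains further.

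The main obstacle is the $\Un_n(q)$ analysis, together with the verification — for every $G$ in the list — that the longest inclusion chain of almost elusive subgroups has genuinely been found. In particular one must keep careful track of subgroups that are abstractly isomorphic but lie in distinct $G$-classes, only some of which yield almost elusive actions (the phenomenon recorded in Remark~\ref{r:quasirmk} and Table~\ref{tab:QuasiInA}), and confirm that enlarging $L_1$ to a maximal overgroup never meets an elusive obstruction beyond the excluded $\Ma_{11}$ example. Handling the dependence on $k=|G:G_0|$ in the $\Li_2(p)$ Borel chains — distinguishing $\Li_2(p)$, ${\rm PGL}_2(p)$ and the intermediate groups, and deciding in each case whether the Borel chain or a dihedral chain is the longer — is the other delicate point.
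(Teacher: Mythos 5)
Your proposal is correct and follows essentially the same route as the paper: reduce via Theorem~\ref{t:mainquasi} (every term below the top of a depth-$\geqslant 2$ chain is a non-maximal almost elusive subgroup, so $G$ must appear in case (i) or in Tables~\ref{tab:imprimitiveAE}/\ref{tab:QuasiInA}), compute $D_G$ by {\sc Magma} for the groups in Table~\ref{tab:QuasiInA}, and count divisor chains $d$ with $\alpha(d)=\alpha(m)$ to get $D_G=\omega(m)-\pi(m)+1$ in the $\Li_2(p)$ families, with case (i) simply listed. Your extra preliminary step (pushing the top of a longest chain up to a maximal subgroup via the derangement monotonicity and excluding ${\rm M}_{11}$) is valid but redundant given Theorem~\ref{t:mainquasi}, and note that the shapes $p=2^m\pm1$, $p=2\cdot3^a-1$ come from that classification rather than from the non-squarefreeness condition, which only characterises when $D_G\geqslant 2$ within those families.
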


\begin{remark}Here we provide some remarks on Corollary \ref{c:depth}.
\begin{itemize}
\item[{\rm (a)}] {For a group to arise in case (i) with $D_G\geqs 1$ we need $G$ to be as in case U1 of Table \ref{tab:fullastab1}. That is $G=G_0.[2f]$ where $q=2^f$ and all the relevant number theoretic conditions are satisfied. As highlighted above, we do not anticipate any groups to arise in this case (see for example Remark \ref{r:rmkfullastab1}(b)). However, let us observe that if $G=G_0.C_{2f}$ is a genuine example in case U1, then $D_G\geqs \omega(2f)-1$.}
\item[{\rm (b)}] We do not know if $D_G$ can be arbitrarily large, which seems to depend on some very difficult open problems in number theory. But we have checked computationally that if $G_0=\Li_2(p)$ and $p<2^{1020}$ is a prime of the required form, then $D_G\leqs 10$. 
\end{itemize}
\end{remark}

To conclude the introduction, let us briefly describe the layout of the paper. In Section \ref{s:except} we complete the classification of the almost elusive almost simple primitive groups by proving Theorem \ref{t:main1}.
Here $G$ is an almost simple exceptional group of Lie type with socle $G_0$ and $H$ is a core-free maximal subgroup of $G$. We begin by presenting several preliminary results of a number-theoretic flavour and we briefly discuss the subgroup structure of exceptional groups. We also present Theorem \ref{t:pi}, which classifies the pairs $(G_0,H)$ such that $\pi(G_0)-\pi(H_0)\leqs 1$, where $H_0=H\cap G_0$ and $\pi(X)$ denotes the number of distinct prime divisors of $|X|$. This is an extension of \cite[Corollary 5]{LPS}, and it is the analogue of \cite[Theorem 2]{Hall}, which was stated for classical groups. Since $G$ is almost elusive only if $\pi(G_0)-\pi(H_0)\leqs 1$, this key result significantly reduces the number of cases we need to consider in the proof of Theorem \ref{t:main1}.

Next, in Section \ref{s:affine}, we prove Theorem \ref{t:mainaffine} on the 2-transitive affine groups, which completes the classification of the primitive almost elusive groups.
Here we apply Hering's theorem \cite{Hering}, which divides the 2-transitive affine groups into several infinite families together with a small number of sporadic cases. We use computational methods in {\sc Magma} \cite{Mag} for the sporadic cases that arise and the infinite families are handled case by case.

In Section \ref{s:quasi} we extend the primitive classification to all quasiprimitive groups. We easily reduce to the case where $G$ is almost simple with socle $G_0$ and the point stabiliser $H$ is contained in a core-free maximal subgroup $M$ such that $(G,M)$ is almost elusive. This allows us to use Theorem \ref{t:fullaeprim} to inspect the possibilities that arise. For the pairs of groups $(G,M)$ in Table \ref{tab:fullastab2} we can use computational methods in {\sc Magma}. For the infinite families in Table \ref{tab:fullastab1} we can apply similar techniques used for the primitive groups. For example, we can reduce the problem to the cases with $\pi(M\cap G_0)=\pi(H\cap G_0)$ (see Lemma \ref{l:piMpiH}).
Finally in Section \ref{s:corollaries} we present proofs of Corollaries \ref{c:largestprim} and \ref{c:depth} and then in Section \ref{s:tables} we present the relevant tables referred to in the statements of Theorems \ref{t:fullaeprim} and \ref{t:mainquasi}.

\vs

\noindent \textbf{Notation.} Our group theoretic notation is fairly standard. Let $A$ and $B$ be groups and $n$ a positive integer. We write $C_n$ or $n$ to denote a cyclic group of order $n$ and $[n]$ to denote an unspecified soluble group of order $n$. Additionally we use $D_n$ to denote the dihedral group of order $n$. An unspecified extension of $A$ by $B$ will be denoted as $A.B$, and we sometimes use $A{:}B$ if the extension splits. For simple groups we adopt the notation of Kleidman and Liebeck \cite{KL}. 
For instance, 
$$ {\rm PSL}_n(q)=\Li_n(q)=\Li_n^{+}(q),\,\,\,{\rm PSU}_n(q)=\Un_n(q)=\Li_n^{-}(q)$$
Additionally, let $G$ be a finite group and let $H$ be a core-free subgroup.  For a given property $X$ of a permutation group, we say $(G,H)$ has property $X$ if $G$ has property $X$ with respect to the natural action of $G$ on the cosets of $H$. For example, we say $(G,H)$ is almost elusive if $G$ is almost elusive with respect to the natural action of $G$ on the cosets of $H$. We let $\pi(A)$ and $\pi(n)$ denote the number of distinct prime divisors of $|A|$ and $n$ respectively, while $\alpha(A)$ and $\alpha(n)$ denote the set of distinct prime divisors of $|A|$ and $n$ respectively. For positive integers $a$ and $b$ we use $(a,b)$ to denote the greatest common divisor of $a$ and $b$.
\vs 

\noindent \textbf{Acknowledgments.}  I would like to thank my Ph.D. supervisor Professor Tim Burness for his guidance and helpful
comments. I would also like to acknowledge the financial support of EPSRC and the Heilbronn Institute for
Mathematical Research.

\section{Exceptional Groups}\label{s:except}

In this section we prove Theorem \ref{t:main1}. Throughout this section we let $G$ denote an almost simple permutation group with socle $G_0$, an exceptional group of Lie type over $\mathbb{F}_q$, and let $H$ denote a point stabiliser in $G$. Additionally, throughout the section we write $q=p^f$, with $p$ prime and $f\geqs 1$. We note that the groups with 
\begin{equation}\label{e:smallcases}
G_0\in\{{}^2F_4(2)',G_2(2)',{}^2G_2(3)'\}
\end{equation}
 are very amenable to simple computational methods for the proof of Theorem \ref{t:main1}. In particular, for Theorem \ref{t:main1} the groups $G$ with a socle in \eqref{e:smallcases} have already been handled in \cite[Propositions 4.4, 4.12 and 5.1]{BHall} and the almost elusive cases here can be found in Table \ref{tab:fullastab2}.
We note the groups with socle $G_2(2)'\cong\Un_3(3)$ or ${}^2G_2(3)'\cong\Li_2(8)$ appear in Table \ref{tab:fullastab2} as $\Un_3(3)$ or $ \Li_2(8)$ respectively to avoid repetition due to isomorphisms.  Thus we assume for the remainder of the section that $G_0$ is not one of the groups in \eqref{e:smallcases}. 

\subsection{Preliminary results}\label{s:prelimex}
In this section we provide some preliminary results that will be useful in the proof of Theorem \ref{t:main1}. We will discuss the subgroup structure of the exceptional groups as well as the conjugacy classes of certain prime order elements. However, we will begin by discussing some useful number-theoretic results from the literature on primitive prime divisors. Let $n$ be a positive integer. A prime divisor of $q^n-1$ is said to be a \emph{primitive prime divisor} if it does not divide $q^i-1$ for all $1\leqs i<n$. We define 
$$P_q^n=\{r\mid r \mbox{ is a primitive prime divisor of } q^n-1\}$$ 
and we note that $P^{nf}_p\subseteq P^n_q$. The following result is a famous theorem of Zsigmondy \cite{Zsig} from the 1890s regarding the existence of primitive prime divisors.

\begin{thm}\label{t:zsig}
The set $P_q^n$ is non-empty unless either $(n,q)=(1,2),(6,2)$, or $n=2$ and $q=p$ is a Mersenne prime.
\end{thm}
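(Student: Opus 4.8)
The proof is the classical one via cyclotomic polynomials. Write
\[
q^n - 1 = \prod_{d \mid n} \Phi_d(q),
\]
where $\Phi_d$ is the $d$th cyclotomic polynomial, and observe that a prime $r$ belongs to $P_q^n$ if and only if $r \mid \Phi_n(q)$ and $r \nmid n$ --- equivalently, the multiplicative order of $q$ modulo $r$ equals $n$. Indeed, if $r \in P_q^n$ then $r \mid q^n - 1$ but $r \nmid q^d - 1$ for every proper divisor $d$ of $n$, so $r$ divides the ``new'' factor $\Phi_n(q)$ and cannot divide $n$; conversely a prime factor of $\Phi_n(q)$ not dividing $n$ must have order exactly $n$. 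Thus it suffices to decide for which $(n,q)$ the integer $\Phi_n(q)$ has a prime factor coprime to $n$, and to identify the exceptions.

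First I would prove the standard lemma isolating the possible ``imprimitive'' prime factors: if $n \ge 2$ and a prime $r$ divides $\gcd(\Phi_n(q), n)$, then $r$ is the largest prime divisor of $n$, the order of $q$ modulo $r$ is $n/r^a$ where $r^a \,\|\, n$, and $r \,\|\, \Phi_n(q)$ (exact power), the sole exception being $n = 2$, $r = 2$, $q \equiv 3 \pmod 4$, where $4$ may divide $\Phi_2(q) = q + 1$. This follows from a short computation with $\mathrm{ord}_r(q)$ together with the lifting-the-exponent lemma. The upshot is that if $q^n - 1$ has no primitive prime divisor then, setting aside $n = 1$ (where $\Phi_1(q) = q - 1$, forcing $q = 2$) and the case $n = 2$ with $q + 1$ a power of $2$, we are forced to have $\Phi_n(q) = P(n)$, the largest prime factor of $n$; in particular $\Phi_n(q) \le n$.

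The second ingredient is a lower bound for $\Phi_n(q)$ which contradicts $\Phi_n(q) \le n$ outside a small range. Writing $\Phi_n(q) = \prod_\zeta (q - \zeta)$ over the primitive $n$th roots of unity and using $|q - \zeta| \ge q - 1$ gives $\Phi_n(q) \ge (q-1)^{\phi(n)}$, strict for $n \ge 3$; pairing conjugate roots and bounding $|q - \zeta|^2 = q^2 - 2q\cos\theta + 1$ away from $(q-1)^2$ sharpens this enough that $\Phi_n(q) > n$ whenever $q \ge 3$ and $n$ lies outside a short explicit list, and likewise for $q = 2$ once $n$ is large. Combined with the previous paragraph, only finitely many pairs $(n,q)$ survive, and a direct check disposes of them: a primitive prime divisor always exists except for $(n,q) = (1,2)$ (trivially $2-1 = 1$); $(n,q) = (6,2)$, where $\Phi_6(2) = 3$ and its only prime divides $6$ --- matching $2^6 - 1 = 63 = 7 \cdot 9$ with $7 \mid 2^3 - 1$ and $3 \mid 2^2 - 1$; and $n = 2$ with $q = p$ a Mersenne prime, where $\Phi_2(q) = q + 1 = 2^k$, so every prime of $q^2 - 1$ already divides $q - 1$, while conversely any odd prime factor of $q + 1$ is primitive for $n = 2$.

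The main obstacle is making the lower bound for $\Phi_n(q)$ sharp enough to keep the residual case analysis genuinely finite and short --- above all for $q = 2$, where the crude estimate $(q-1)^{\phi(n)} = 1$ is worthless and one must argue more carefully (this is the content of Bang's theorem, the $q = 2$ special case), and this is precisely where the sporadic exception $(6,2)$ emerges. By comparison the order / lifting-the-exponent lemma, although it needs minor case distinctions (the prime $2$, and the characteristic $p$), is routine.
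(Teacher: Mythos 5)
The paper does not prove this statement at all: it is Zsigmondy's classical theorem, quoted with a citation to \cite{Zsig}, so there is no internal argument to compare against. Your proposal is the standard cyclotomic-polynomial proof and its overall structure is correct: the equivalence $r\in P_q^n \Leftrightarrow r\mid \Phi_n(q)$ and $r\nmid n$, the lemma that any common prime divisor of $\Phi_n(q)$ and $n$ is the largest prime factor of $n$ and divides $\Phi_n(q)$ exactly once (with the genuine exception $n=2$, $r=2$, $q\equiv 3 \pmod 4$), the resulting bound $\Phi_n(q)\leqs n$ in the absence of a primitive prime divisor, and the contradiction with a lower bound for $\Phi_n(q)$ outside a finite list that is then checked directly, yielding exactly $(n,q)=(1,2)$, $(6,2)$ and $n=2$ with $q+1$ a $2$-power. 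Two points would still need to be written out to make this a complete proof. First, the quantitative step you flag yourself: for $q=2$ the estimate $(q-1)^{\phi(n)}$ is vacuous, and one needs something like $\Phi_n(2)=2^{\phi(n)}\prod_{d\mid n}(1-2^{-d})^{\mu(n/d)}>2^{\phi(n)}\prod_{k\geqs 1}(1-2^{-k})>2^{\phi(n)-2}$, after which $2^{\phi(n)-2}>n$ fails only for an explicit short list of $n$ that is checked by hand (this is where $(6,2)$ survives); for $q\geqs 3$ the small values $n\in\{2,4,6\}$ also need a direct look since $(q-1)^{\phi(n)}>n$ is not automatic there. Second, a small verification tying your exceptional case to the statement as formulated in the paper: since $q=p^f$ is a prime power, $q+1=2^k$ forces $f=1$ (for $f$ even one gets $q+1\equiv 2\pmod 4$, and for odd $f\geqs 3$ the factor $q+1=(p+1)(p^{f-1}-\dots+1)$ has an odd factor exceeding $1$), so the exception is exactly $q=p$ a Mersenne prime. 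With those details filled in, your argument is a complete and correct proof of the quoted theorem.
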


\begin{lem}
\label{l:Lemma A.1}
Assume $r\in P_q^n$ is an odd prime and let $m$ be a positive integer. Then $r$ divides $q^m-1$ if and only if $n$ divides $m$. Additionally, $r=nd+1$ for some $d\geqs1$.
\end{lem}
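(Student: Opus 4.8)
The statement to prove is Lemma~\ref{l:Lemma A.1}: if $r\in P_q^n$ is an odd prime and $m\geqs 1$, then $r\mid q^m-1$ iff $n\mid m$, and moreover $r=nd+1$ for some $d\geqs 1$. This is a completely standard fact about primitive prime divisors; let me sketch a proof.

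\textbf{The plan} is to work in the multiplicative group $(\Z/r\Z)^\times$ and study the order of $q$ modulo $r$. Write $e = \mathrm{ord}_r(q)$ for the multiplicative order of $q$ in $(\Z/r\Z)^\times$; this is well-defined since $r\nmid q$ (as $r\mid q^n-1$ forces $r\nmid q$). The key elementary fact is that for any positive integer $m$, $r\mid q^m-1$ if and only if $q^m\equiv 1\imod r$ if and only if $e\mid m$. So the whole statement reduces to showing $e=n$.

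\textbf{First} I would show $e\mid n$: since $r\in P_q^n$ we have $r\mid q^n-1$, so $q^n\equiv 1\imod r$, hence $e\mid n$ by the standard property of multiplicative order. \textbf{Next}, suppose for contradiction that $e<n$, so in particular $e$ is a proper divisor of $n$ with $1\leqs e<n$. Then $q^e\equiv 1\imod r$, i.e. $r\mid q^e-1$, contradicting the defining property of a primitive prime divisor (which says $r\nmid q^i-1$ for all $1\leqs i<n$). Hence $e=n$, and the first claim follows immediately: $r\mid q^m-1\iff e\mid m\iff n\mid m$.

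\textbf{For the congruence $r=nd+1$}: by Fermat's little theorem $q^{r-1}\equiv 1\imod r$ (using again $r\nmid q$), so by the first part $n\mid r-1$, i.e. $r-1=nd$ for some integer $d\geqs 0$; and $d\geqs 1$ because $r>1$ forces $r-1\geqs 1>0$ (in fact $r$ odd gives $r\geqs 3$, so $nd=r-1\geqs 2$). Thus $r=nd+1$ with $d\geqs 1$.

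There is essentially \textbf{no serious obstacle} here — the only thing to be careful about is making sure $r\nmid q$ so that the order $e$ is defined, which is immediate from $r\mid q^n-1$, and noting where (if at all) the hypothesis that $r$ is odd is used. In fact the odd hypothesis is not needed for the stated conclusions (it is presumably included because in the intended applications $r$ will be a large primitive prime divisor, automatically odd, or to guard against the degenerate interplay with the prime $2$); I would simply carry it along without using it, or remark that it is not required. The argument is two or three short lines once $e=n$ is established.
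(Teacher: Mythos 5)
Your proof is correct and follows essentially the same route as the paper: the paper cites \cite[Lemma A.1]{BG_book} for the first part (whose proof is precisely your multiplicative-order argument) and invokes Fermat's Little Theorem for the congruence $r=nd+1$, exactly as you do. Your side remark that the oddness of $r$ is not actually needed is also accurate.
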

\begin{proof}
A proof of the first part can be seen in \cite[Lemma A.1]{BG_book} for example, and the second is an easy consequence of Fermat's Little Theorem. 
\end{proof}

In this paper we will often be interested in the situation where $|P_q^n|=1$ for certain values of $n$ and $q=p^f$. However, in general, it is extremely difficult to give exact conditions on $n$, $p$ and $f$ such that $|P_q^n|=1$. This is in part due to the complexity of the Diophantine equations that arise when trying to solve this problem. For example, suppose $P_q^n=\{r\}$ and $n$ is an odd prime that does not divide $q-1$. Then $(q,n,r)$ must be a solution to 
$$\frac{q^n-1}{q-1}=r^l$$
and currently all possible integer solutions to this Diophantine equation are not known. However, the following result provides some restrictions on the possible solutions in the case when $n$ is divisible by 3, and in most cases this will be sufficient for our purposes. 

\begin{prop}\label{p:blnag}
Let $x,y$ and $b$ be integers such that $|x|,|y|>1$ and $b\geqs 2$. Suppose $(x,y,b)$ is a solution to 
\[
\frac{x^3-1}{x-1}=x^{2}+x+1=y^b.
\]
Then $(x,y,b)=(18,7,3)$ or $(-19,7,3)$.
\end{prop}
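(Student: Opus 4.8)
The plan is to recognize this as an instance of a classical Diophantine problem --- finding perfect powers among the ``repunit''-type values $x^2+x+1$ --- and to reduce it to known results of Nagell, Ljunggren and Bugeaud--Mignotte--Shorey on the equation $\frac{x^n-1}{x-1}=y^b$. First I would note that the naming of the proposition (``blnag'' suggesting Bugeaud--Ljunggren--Nagell) signals that the intended proof simply cites the literature. Concretely, $x^2+x+1=y^b$ with $b\geqs 2$ is equivalent to $\frac{x^3-1}{x-1}=y^b$, i.e. $x^3-1=(x-1)y^b$. One would first dispose of the case $\gcd(x-1,x^2+x+1)$: since $x^2+x+1\equiv 3\pmod{x-1}$, the gcd is $1$ or $3$.

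Next I would split according to whether $3\mid y$. If $3\nmid y$, then $\gcd(x-1,y^b)=1$, and looking at $x^3-1=(x-1)y^b$ one deduces that $x-1$ itself is determined by the factorization --- more cleanly, one works directly with $x^2+x+1=y^b$. Multiplying by $4$ gives $(2x+1)^2+3=4y^b$, so $(2x+1)^2 = 4y^b-3$; setting $u=2x+1$ this is a Lebesgue--Nagell type equation $u^2+3=4y^b$. For $b=2$ this is elementary (a difference of squares $4y^2-u^2=3$ forces $(2y-u)(2y+u)=3$, giving only small solutions, none with $|x|>1$). For $b=3$ one invokes the classical result that $x^2+x+1$ is a perfect cube only for $x=18$ and $x=-19$ (Nagell); and for $b\geqs 4$, or reducing to the prime cases $b=2,3$ and then $b\geqs 5$ prime, one appeals to the theorem of Bugeaud, Mignotte and Shorey (or Ljunggren's work on $\frac{x^n-1}{x-1}=y^b$) which shows there are no solutions with $|x|>1$. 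The case $3\mid y$ is handled similarly, noting $9\mid x^2+x+1$ is impossible (since $x^2+x+1\equiv 0\pmod 3$ forces $x\equiv 1\pmod 3$, and then $x^2+x+1\equiv 3\pmod 9$), so $3\| y^b$ forces $b=1$, a contradiction.

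I would organize the write-up as: (1) reduce to $x^2+x+1=y^b$ and record the gcd observation; (2) cite Nagell's theorem for the exponent $b=3$ case to produce exactly $(18,7,3)$ and $(-19,7,3)$; (3) handle $b=2$ by an elementary factorization; (4) for $b\geqs 4$ reduce to a prime exponent $b\geqs 5$ and cite \cite{BMS} (Bugeaud--Mignotte--Shorey) or the relevant Ljunggren/Nagell result to rule out all such solutions with $|x|>1$; (5) confirm $(18,7,3)$ and $(-19,7,3)$ indeed satisfy the equation.

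The main obstacle --- and the reason this is stated as a quotable proposition rather than proved from scratch --- is step (4): ruling out perfect $b$-th powers of the form $x^2+x+1$ for all exponents $b\geqs 4$ with $|x|>1$ genuinely requires deep tools (linear forms in logarithms, or the theory behind the Bugeaud--Mignotte--Shorey analysis of $\frac{x^n-1}{x-1}=y^b$), which is well beyond an elementary argument. So in practice the ``proof'' is a careful citation: the bulk of the work is matching the hypotheses ($|x|,|y|>1$, $b\geqs2$) to the exact statement in the literature and verifying no edge cases (small $|x|$, $b=2,3$, the sign of $x$) are overlooked. I would expect the actual proof in the paper to be just a few lines pointing to \cite{BMS} and Nagell, with the elementary $b=2$ case dispatched by hand.
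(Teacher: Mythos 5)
Your proposal takes essentially the same approach as the paper: the paper's entire proof is a one-line citation of \cite[Proposition 1]{BL} (Bennett--Levin on the Nagell--Ljunggren equation via Runge's method), whose statement is for $|x|,|y|>1$ and hence covers the negative solution $x=-19$ directly. Your patchwork of an elementary $b=2$ argument, Nagell for $b=3$ and Bugeaud--Mignotte--Shorey for prime exponents $b\geqs 5$ would also work, but those classical statements are formulated for positive $x$, so the $x<0$ branch (equivalently $m^2-m+1=y^b$) requires exactly the extra care you flagged --- which is presumably why the paper cites Bennett--Levin rather than the older literature.
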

\begin{proof}
This is a special case of \cite[Proposition 1]{BL}.
\end{proof}

Although we cannot give exact conditions on $n$, $p$ and $f$ such that $|P_q^n|=1$, \cite[Lemma 2.4]{Hall} provides necessary conditions on $f$.
For example, if $n\geqs7$ then $|P_q^n|=1$ only if $\alpha(f)\subseteq\alpha(n)$, where $\alpha(x)$ denotes the set of prime divisors of a positive integer $x$. 
Additionally, if we assume $P_q^n=\{dn+1\}$, then for certain values of $n$ we can provide conditions on the positive integer $d$, see \cite[Lemmas 2.5, 2.9, 2.11 and 2.12]{Hall} for example. In particular, \cite[Lemma 2.9]{Hall} considers the possibilities for $d$, in the case $n=2^a3$ for some $a\geqs 0$. Here we provide an extension of this result for $n=3$ and $6$. 

\begin{lem}\label{l:ppd2a3}
Suppose $n\in\{3,6\}$ and $P_q^n=\{r\}$. Then either $r\geqs 8nf+1$ or $r=dnf+1$ and one of the following holds:
\begin{itemize}
\item[{\rm (i)}] $d=1$ and $(n,q)=(3,4),(6,3),(6,4),(6,5),(6,8)$ or $(6,19)$.
\item[{\rm (ii)}] $d=2$ and $(n,q)=(3,2)$ or $(6,23)$.
\item[{\rm (iii)}] $d=4$ and $(n,q)=(3,3)$.
\item[{\rm (iv)}] $d=6$ and $(n,q)=(3,7), (6,9)$ or $(6,11)$.
\item[{\rm (v)}] $d=7$ and $(n,q)=(6,7)$.
\end{itemize}
\end{lem}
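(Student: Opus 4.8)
\textbf{Proof proposal for Lemma \ref{l:ppd2a3}.}

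The plan is to reduce the statement to a finite verification by bounding $q$ in terms of $d$ using the congruence $r = dnf+1$ together with the fact that $r \mid q^n - 1$. Write $q = p^f$ and suppose $P_q^n = \{r\}$ with $r = dnf+1$ for some $d\geqs 1$; assume $d\leqs 7$, so that we are not in the case $r\geqs 8nf+1$. By Lemma \ref{l:Lemma A.1}, $r$ is the unique primitive prime divisor of $q^n-1$, and since $r\in P_q^n$ we have $r\leqs q^n-1 < q^n$, which on its own is too weak. The key extra input is that $r$ divides $\Phi_n(q)$ (the $n$-th cyclotomic polynomial evaluated at $q$), and in fact $\Phi_n(q)/(n,\Phi_n(q))$ is a power of $r$ when $P_q^n=\{r\}$ --- here $(n,\Phi_n(q))$ is $1$ or a single prime dividing $n$. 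For $n=3$ this gives $q^2+q+1 = 3^\delta r^l$ with $\delta\in\{0,1\}$, and for $n=6$ it gives $q^2-q+1 = 3^\delta r^l$ with $\delta\in\{0,1\}$.

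The main case-split is on $l$. First I would dispose of $l\geqs 2$: when $n=3$ and $\delta=0$ we have $q^2+q+1 = r^l$, and Proposition \ref{p:blnag} (applied with $x=q$, $y=r$, $b=l$) forces $(q,r,l)=(18,7,3)$ --- but $q=18$ is not a prime power, so this does not occur; the case $\delta=1$, i.e. $q^2+q+1 = 3r^l$, needs a short separate argument (e.g. $3\mid q^2+q+1$ forces $q\equiv 1\imod 3$, and then one either cites the relevant case of \cite{BL} or bounds $q$ directly). For $n=6$ one reduces to the $n=3$ situation via $\Phi_6(q)=\Phi_3(-q)=\Phi_3(q^2)/\Phi_3(q)$, or more simply notes $q^2-q+1 = r^l$ combined with the substitution $x=-q$ in Proposition \ref{p:blnag} yields $(-q,r,l)=(-19,7,3)$, giving the genuine solution $(n,q,d)=(6,19,1)$ in part (i). So after handling $l\geqs 2$ we are left with $l=1$, i.e. $\Phi_n(q) = 3^\delta r = 3^\delta(dnf+1)$.

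Now with $l=1$ the equation $\Phi_n(q) \leqs 3(dnf+1)$ gives, since $\Phi_n(q)\geqs (q-1)^{\varphi(n)} = (q-1)^2$ for $n\in\{3,6\}$, the bound $(q-1)^2 \leqs 3(7nf+1) \leqs 3(42f+1)$ (using $d\leqs 7$, $n\leqs 6$). Writing $q=p^f$, this forces $f$ to be small and then $p$ to be small: for instance $f=1$ gives $(p-1)^2\leqs 3(42\cdot 1+1)=129$, so $p\leqs 12$, i.e. $p\in\{2,3,5,7,11\}$; $f=2$ gives $(p^2-1)^2\leqs 3(85)=255$, so $p=2$ (and $q=4$); and $f\geqs 3$ is impossible already for $p=2$ since $(2^3-1)^2=49$ while $3(42\cdot 3+1)=381$ --- wait, that is not a contradiction, so one must push the bound a little: in fact for $p=2$, $\Phi_n(q)=\Phi_n(2^f)$ grows like $4^f$ while $3(7nf+1)$ grows linearly, so $f\leqs 6$ or so suffices, and one checks these finitely many $(p,f)$ directly. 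For each surviving pair $(n,q)$ with $q$ a prime power one computes $\Phi_n(q)$, extracts its prime factorisation, checks whether it has a unique primitive prime divisor $r$, and if so reads off $d=(r-1)/(nf)$; this yields exactly the list in parts (i)--(v), and one also records that the remaining pairs either have $|P_q^n|>1$ or satisfy $r\geqs 8nf+1$. I expect the main obstacle to be organising the $l=1$ finite check cleanly --- in particular making the bound on $f$ fully rigorous for $p=2$ (and $p=3$), and correctly handling the factor $3^\delta$ and the Mersenne/small exceptions of Theorem \ref{t:zsig} so that no case is overlooked; the $l\geqs 2$ case is essentially immediate from Proposition \ref{p:blnag} once the cyclotomic reformulation is in place.
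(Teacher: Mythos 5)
The paper states this lemma without proof (it is offered as an extension of \cite[Lemma 2.9]{Hall}), so there is no in-paper argument to compare against; judging your proposal on its own terms, the cyclotomic reformulation $\Phi_n(q)=3^{\delta}r^{l}$ with $\delta\in\{0,1\}$, the split on $l$, and the bound-plus-finite-check for $l=1$ form a sensible skeleton. However, your treatment of $l\geqs 2$ with $\delta=1$ contains a genuine gap, and it is not one that ``a short separate argument'' will close. For $n=6$ the equation $q^{2}-q+1=3r^{2}$ is equivalent to the Pell equation $(2q-1)^{2}-12r^{2}=-3$ and has infinitely many integer solutions, so no appeal to Proposition \ref{p:blnag} (which treats only $x^2+x+1=y^{b}$, not $3y^{b}$) can dispose of it. Worse, one of these solutions is an actual entry of the lemma: $q=23$ gives $\Phi_6(23)=507=3\cdot 13^{2}$, $P_{23}^{6}=\{13\}$ and $r=13=2\cdot 6\cdot 1+1$, i.e.\ case (ii) with $(n,q)=(6,23)$. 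Your structure (``after handling $l\geqs 2$ we are left with $l=1$'', with only $(6,19)$ emerging from $l\geqs 2$) would therefore omit $(6,23)$, and your $l=1$ bound (e.g.\ $p\leqs 12$ when $f=1$) cannot recover it. The case $\delta=1$, $l\geqs 2$ also genuinely occurs for $n=3$: $\Phi_3(313)=3\cdot 181^{2}$ with $P_{313}^{3}=\{181\}$ (there it lands harmlessly in the $r\geqs 8nf+1$ branch). The correct way to handle $l\geqs 2$, $\delta=1$ is to note $r\geqs\sqrt{\Phi_n(q)/3}>(q-1)/\sqrt{3}$, which forces $r\geqs 8nf+1$ once $q=p^{f}$ exceeds an explicit small bound, and then to check the remaining small $q$ directly --- which is precisely where $(6,23)$ is found.

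A second, smaller issue: you assume at the outset that $r=dnf+1$, i.e.\ $nf\mid r-1$, but Lemma \ref{l:Lemma A.1} only gives $n\mid r-1$ (with respect to $q$), and a primitive prime divisor of $q^{n}-1$ need not be a primitive prime divisor of $p^{nf}-1$ in general (e.g.\ $q=4$, $n=3$, $r=7$ has order $3$, not $6$, as a divisor of $2^{m}-1$). The congruence is part of the lemma's conclusion; it does hold under the uniqueness hypothesis, but that requires an argument (uniqueness of the primitive prime divisor restricts the prime divisors of $f$, after which $r\in P_{p}^{nf}$, with $q=4$ handled by hand). Alternatively, run your bounding step from the weaker hypothesis $r\leqs 8nf$ --- it yields essentially the same finite list of pairs $(p,f)$ --- and verify the congruence numerically for the survivors. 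As written, the dichotomy ``either $r\geqs 8nf+1$ or $r=dnf+1$ with $d\leqs 7$'' is assumed rather than proved.
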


Before we begin our discussion on exceptional groups and their subgroup structure we first state the following elementary observation which is an application of \cite[Lemma 2.2]{BTV}. We provide the details for completeness. We note that $x\in G$ is a derangement if and only if $x^G\cap H=\emptyset$, where $x^G$ denotes the conjugacy class of $x$.

\begin{lem}\label{l:classes}
Let $G\leqs {\rm Sym}(\Omega)$ be an almost simple quasiprimitive permutation group with socle $G_0$, and point stabiliser $H$. Define $H_0=H\cap G_0$. Let $r$ be a prime divisor of $|\Omega|$ and let $a_r$ and $b_r$ denote the number of $G_0$-classes and $H_0$-classes of elements of order $r$ respectively. Assume $a_r>b_r$. Then there exists a derangement of order $r$ in $G$. 
\end{lem}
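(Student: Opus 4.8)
The plan is to count the derangements of order $r$ directly by comparing the conjugacy classes in $G_0$ with those meeting the point stabiliser, exploiting the hypothesis $G = G_0 H$ that comes with quasiprimitivity. First I would recall the setup from \cite[Lemma 2.2]{BTV}: for a prime $r$ dividing $|\Omega| = |G:H|$, the element $x \in G$ of order $r$ is a derangement precisely when $x^G \cap H = \emptyset$, so it suffices to exhibit one $G_0$-class of elements of order $r$ that is disjoint from $H$. Since $G_0 \trianglelefteqslant G$ and $G = G_0 H$, the fusion of $G_0$-classes is controlled by $H$ acting on them; in particular, a $G_0$-class $C$ of order-$r$ elements fails to be a derangement class only if $C$ meets $H$, and in that case $C \cap H$ is a union of $H_0$-classes (where $H_0 = H \cap G_0$), because $C \cap H \subseteq C \cap G_0 = C$ is normalised by $H_0$.

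The key counting step is then the following. Let $\mathcal{C}_1, \dots, \mathcal{C}_{a_r}$ be the $G_0$-classes of elements of order $r$. Every element of order $r$ in $H_0$ lies in $H_0 \leqslant G_0$, hence in exactly one $\mathcal{C}_i$; thus the $H_0$-classes of order-$r$ elements partition into those $\mathcal{C}_i$ they meet. Since there are only $b_r$ such $H_0$-classes, at most $b_r$ of the $\mathcal{C}_i$ can meet $H_0$. I claim that a $G_0$-class meeting $H$ already meets $H_0$: indeed if $x \in \mathcal{C}_i \cap H$ then $x \in G_0 \cap H = H_0$, so $x \in \mathcal{C}_i \cap H_0$. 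Therefore at most $b_r$ of the $a_r$ classes $\mathcal{C}_i$ meet $H$ at all. Under the hypothesis $a_r > b_r$, there is some $\mathcal{C}_j$ with $\mathcal{C}_j \cap H = \emptyset$, and any $x \in \mathcal{C}_j$ is a derangement of order $r$ in $G$ (note $\mathcal{C}_j$ is nonempty since $a_r \geqslant 1$ forces $r \mid |G_0|$, so such $x$ exists).

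The only genuinely substantive point — and the one I would be most careful about — is the claim that a $G_0$-class meeting $H$ must in fact meet $H_0 = H \cap G_0$; this is where the identity $x \in G_0$ (forced by $x$ lying in a $G_0$-class) combines with $x \in H$ to give $x \in H_0$, and it is really just a set-theoretic triviality rather than a deep fact, but it is the crux of why one compares $a_r$ with $b_r$ rather than with the number of $H$-classes. I would phrase the argument in terms of the natural surjection from $\{$order-$r$ elements of $H_0\}$ to $\{$$G_0$-classes meeting $H_0\}$, whose image has size at most $b_r$ because it factors through the set of $H_0$-classes. Everything else is bookkeeping: $|\Omega| = |G:H|$ and $r \mid |\Omega|$ is used only implicitly (it is subsumed in the hypothesis that there are $a_r \geqslant 1$ relevant classes and $a_r > b_r$), and no deeper structure theory is needed. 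This matches the stated role of the lemma as an ``elementary observation'' applied repeatedly later to produce derangements of a prescribed prime order in the exceptional-group analysis.
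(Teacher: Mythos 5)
Your proposal is correct and takes essentially the same route as the paper: the count $a_r>b_r$ yields a $G_0$-class of order-$r$ elements disjoint from $H_0$ (equivalently from $H$), and the factorisation $G=G_0H$ coming from quasiprimitivity — your ``fusion of $G_0$-classes is controlled by $H$'' remark, which the paper makes explicit by writing $g=uh$ and deducing $y^u\in H\cap G_0=H_0$ — shows that the elements of that class are derangements in $G$. The one comment I would add is that this fusion step, not the set-theoretic identity $C\cap H=C\cap H_0$ you single out as the crux, is where the hypothesis $G=G_0H$ is genuinely needed; you do state and use it correctly, but it deserves the one-line verification rather than the triviality.
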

\begin{proof}
Since we are assuming $a_r>b_r$, there exists an element $y\in G_0$ of order $r$ such that $y^{G_0}\cap H_0 =\emptyset$. Assume for contradiction that there are no derangements of order $r$ in $G$. Then in particular, $y^G\cap H\neq\emptyset$, say $y^g\in H$ for some $g\in G$. We note $G=G_0H$, since $G$ is quasiprimitive, so we may write $g=uh$ where $u\in G_0$ and $h\in H$. Thus $(y^u)^h\in H$, which implies that $y^u\in H^{h^{-1}}=H$. However $y^u\in G_0$, contradicting the fact that $y^{G_0}\cap H_0 =\emptyset$. Thus $y$ is a derangement of order $r$ in $G$.
\end{proof}

The structure and classification of the maximal subgroups, up to conjugacy, of many of the exceptional groups of Lie type are well documented. 
For the groups with socle $G_2(q)$, the maximal subgroups were determined up to conjugacy by Kleidman \cite{Kleid-g2q} for $q$ odd and Cooperstein \cite{Coop} for $q$ even (see also \cite[Tables 8.30, 8.41 and 8.42]{BHR}). 
The maximal subgroups for groups with socle ${}^2F_4(q)$ and ${}^3D_4(q)$ were determined in \cite{Mal} and \cite{Kleid-3d4} respectively.
Finally a convenient source for the maximal subgroups of groups with socle ${}^2G_2(q)$ and ${}^2B_2(q)$ is \cite[Tables 8.43 and 8.16]{BHR}, which are reproduced from \cite{Kleid-g2q} and \cite{Suz} respectively. 
In addition, we note that the maximal subgroups of groups with socle $F_4(q)$, $E_6(q)$ and ${}^2E_6(q)$ have recently been fully determined up to conjugacy by Craven in \cite{Crav}. For the remaining exceptional groups, those in which $G_0\in\{E_7(q), E_8(q)\}$, we provide the following theorem. In this theorem and for the remainder of this section we write $G=(\bar{G}_{\sigma})'$, where $\bar{G}$ is a simple algebraic group of adjoint type over $\bar{\mathbb{F}}_p$ and $\sigma$ is an appropriate Steinberg endomorphism of $\bar{G}$. Additionally, we note this theorem also holds true, with minor adjustments, for the cases $G_0\in\{G_2(q), F_4(q), {}^2E_6(q), E_6(q)\}$. This is a version of \cite[Theorem 8]{LSE}. 

\begin{thm}\label{t:redsub}
Let $G$ be an almost simple group with socle $G_0=(\bar{G}_{\sigma})'\in\{E_7(q), E_8(q)\}$. Let $H$ be a core-free maximal subgroup of $G$ and set $H_0=H\cap G_0$. Then one of the following holds:
\begin{itemize}
\item[{\rm (I)}] $H$ is a maximal parabolic subgroup;
\item[{\rm (II)}] $H = N_G(\bar{H}_{\sigma})$ and $\bar{H}$ is a $\sigma$-stable non-parabolic maximal rank subgroup of $\bar{G}$: the possibilities for $H$ are determined in \cite[Tables 5.1 and 5.2]{LSS}.
\item[{\rm (III)}] $H = N_G(\bar{H}_{\sigma})$, where $\bar{H}$ is a maximal closed $\sigma$-stable positive dimensional subgroup of $\bar{G}$ (not parabolic nor maximal rank). 
\item[{\rm (IV)}] $H$ is of the same type as $G$ over a subfield of $\mathbb{F}_q$.
\item[{\rm (V)}] $H$ is an exotic local subgroup (determined in \cite{CLSS}).
\item[{\rm (VI)}] $G_0=E_8(q)$, $p\geqs 7$ and $H_0=(A_5\times A_6).2^2$.
\item[{\rm (VII)}] $H$ is almost simple and is not of type ${\rm (III)}$ or ${\rm (IV)}$.
\end{itemize}
\end{thm}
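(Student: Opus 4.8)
The plan is to deduce this from the reduction theorem for maximal subgroups of finite exceptional groups of Lie type, in the form given by Liebeck and Seitz as \cite[Theorem 8]{LSE}, together with the classification of maximal closed positive-dimensional subgroups of exceptional simple algebraic groups. Recall the setup: $\bar{G}$ is simple of adjoint type over $\bar{\mathbb{F}}_p$, $\sigma$ is a Steinberg endomorphism with $G_0=(\bar{G}_\sigma)'$, and $G$ is almost simple with socle $G_0$. First I would invoke the reduction theorem to split into two broad cases: either $H$ is of \emph{geometric type}, meaning $H=N_G(\bar{H}_\sigma)$ for some maximal closed $\sigma$-stable positive-dimensional subgroup $\bar{H}$ of $\bar{G}$, or $H$ lies on a short explicit list of non-geometric possibilities --- subfield subgroups (case (IV)), exotic local subgroups classified in \cite{CLSS} (case (V)), the isolated family with $G_0=E_8(q)$, $p\geqs 7$ and $H_0=(A_5\times A_6).2^2$ (case (VI)), and almost simple subgroups whose socle does not occur among the reductive maximal-rank or other positive-dimensional subgroups (case (VII)).

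Next I would subdivide the geometric case using the algebraic-group classification: such an $\bar{H}$ is either a maximal parabolic subgroup, a reductive subgroup of maximal rank, or a reductive subgroup not of maximal rank. The first possibility yields the maximal parabolic subgroups of $G$ (case (I)); here one needs the standard fact, valid for $G_0\in\{E_7(q),E_8(q)\}$, that each maximal parabolic of $\bar{G}$ restricts to a genuinely maximal subgroup of $G$ (unlike in certain small-rank or small-field cases, no exceptions occur for $E_7$ and $E_8$). The second gives case (II), with the precise possibilities for $H=N_G(\bar{H}_\sigma)$ tabulated in \cite[Tables 5.1 and 5.2]{LSS}. The third gives case (III). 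I would then check that the families are mutually exclusive exactly as stated: parabolic subgroups are distinguished by containing a Borel subgroup, non-parabolic maximal-rank subgroups by containing a maximal torus but no Borel, and the almost simple subgroups in (VII) are by construction not the normaliser of a positive-dimensional subgroup (so not of type (III)) and not subfield subgroups (so not of type (IV)).

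The remaining work is the bookkeeping behind the ``minor adjustments'' alluded to in the statement: \cite[Theorem 8]{LSE} is stated for a particular isogeny type and in places only for $G_0$ itself, so I would need to track the outer automorphism group of $G_0$ --- purely field automorphisms for $E_8(q)$, and field automorphisms together with the diagonal automorphism of order $(2,q-1)$ for $E_7(q)$ --- to confirm that normalisers in the full almost simple group $G$ still fall into the listed families, and that passing from $G_0$ to $G$ introduces no new type of maximal subgroup. I expect the structural dichotomy itself to cause no trouble, since the reduction theorem and the algebraic-group classification are quoted wholesale; the hard part will be the careful matching of conventions --- in particular verifying that the maximal-rank list extracted from \cite[Tables 5.1 and 5.2]{LSS} is complete and non-redundant for $E_7$ and $E_8$ and agrees with the notation used elsewhere in the paper --- rather than any new group theory.
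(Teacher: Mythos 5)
Your proposal is correct and matches the paper's treatment: the paper gives no independent proof, simply stating that the theorem ``is a version of \cite[Theorem 8]{LSE}'', i.e.\ it quotes the Liebeck--Seitz reduction theorem wholesale with the same minor adjustments for the almost simple group $G$ that you describe. The additional detail you supply (the subdivision of the geometric case and the outer-automorphism bookkeeping) is exactly the implicit content of that citation.
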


The conjugacy classes of maximal parabolic subgroups (type (I)) for $G_0=E_7(q)$ and $E_8(q)$ are in bijective correspondence with the nodes of the corresponding Dynkin diagrams. In this paper we are interested in the prime divisors of the orders of these subgroups. To obtain the prime divisors of a maximal parabolic subgroup $P$, it is convenient to use the Levi decomposition $P=QL$, where $Q$ is the unipotent radical of $P$ and $L$ is a Levi subgroup. From here we can easily read off the prime divisors of $|L|$ using the Dynkin diagram (note $p$ is the only prime divisor of $|Q|$). For example, if $G=E_7(q)$ and $P=P_5$, then the prime divisors of $|L|$ must divide $|{\rm SL}_5(q)||{\rm SL}_3(q)|$.

Following \cite[Theorem 8]{LSS} the subgroups of type (III) can be partitioned into three cases as shown below:
\begin{prop}
Let $G$ and $H$ be as in Theorem \ref{t:redsub}, with $H$ of type {\rm (III)}. Then one of the following holds:
\begin{itemize}
\item[{\rm (i)}] $G_0=E_7(q)$, $p\geqs 3$ and $H_0=(2^2\times{\rm P}\Omega^+_8(q).2^2). S_3$ or ${}^3D_4(q).3$,
\item[{\rm (ii)}] $G_0=E_8(q)$, $p\geqs 7$ and $H_0={\rm PGL}_2(q)\times S_5$,
\item[{\rm (iii)}] $(G_0,{\rm soc}(H_0))$ is one of the cases listed in \cite[Table 3]{LSE}.
\end{itemize}
\end{prop}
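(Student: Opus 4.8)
The plan is to deduce the statement directly from the Liebeck--Seitz classification of maximal positive-dimensional closed subgroups of exceptional algebraic groups, as packaged in \cite[Theorem 8]{LSS}. By definition of type (III), we have $H = N_G(\bar{H}_\sigma)$ for some maximal closed $\sigma$-stable positive-dimensional subgroup $\bar{H}$ of $\bar{G}$ which is neither parabolic nor of maximal rank. For $\bar{G}$ of type $E_7$ or $E_8$ this classification gives an explicit finite list of candidates for $\bar{H}$: in almost every case the connected component $\bar{H}^{\circ}$ is a simple subgroup of small rank, and for each entry the precise restrictions on $p$ needed for maximality are recorded. Passing to $\sigma$-fixed points and taking normalisers in $G$ then turns this into a finite list of possibilities for $H_0 = H\cap G_0$, and one reads off ${\rm soc}(H_0)$ together with the conditions on $p$ in each instance.

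I would then organise the resulting list into the three advertised families. The two configurations in $G_0 = E_7(q)$ arising from $\bar{H}$ of type $D_4$ are special, since here $N_{\bar{G}}(\bar{H})/\bar{H}$ realises the full triality group $S_3$: a split Steinberg endomorphism produces $H_0 = (2^2\times {\rm P}\Omega^+_8(q).2^2).S_3$, while a triality-twisted one produces $H_0 = {}^3D_4(q).3$, giving case (i) (with $p \geqs 3$, as dictated by the classification). A single further entry, occurring in $G_0 = E_8(q)$ with $p \geqs 7$, gives $H_0 = {\rm PGL}_2(q)\times S_5$, which is case (ii). For every remaining candidate, $\bar{H}^{\circ}$ is simple, and the corresponding pair $(G_0, {\rm soc}(H_0))$ is one of those listed in \cite[Table 3]{LSE}; this is case (iii). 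Since these three families together account for every candidate produced by the classification, the proposition follows.

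There is no serious conceptual obstacle here, as the structural content is entirely contained in \cite{LSS} and \cite{LSE}; the work is bookkeeping. The points requiring care are: checking that no entry of the algebraic-group classification is overlooked and that each genuinely gives a maximal subgroup of the finite group $G$ under the stated hypotheses on $p$ (some positive-dimensional maximal subgroups of $\bar{G}$ fail to be maximal in $G_0$, or are maximal only for $p$ outside a small range); correctly computing the finite component groups and extensions ($2^2$, $2^2.2^2$, $S_3$, and the field--graph factors); and, most importantly, matching up the Steinberg endomorphisms with the twisted forms --- in particular keeping the split $D_4$-configuration distinct from the triality-twisted ${}^3D_4$-configuration inside $E_7(q)$.
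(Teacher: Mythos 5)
Your proposal is correct and takes essentially the same route as the paper: the paper offers no independent argument, presenting this proposition purely as a repackaging of the Liebeck--Seitz classification (cited via \cite[Theorem 8]{LSS} together with \cite[Table 3]{LSE}), which is exactly the bookkeeping you describe, including the separation of the split $\mathrm{P}\Omega_8^+(q)$ form from the triality-twisted ${}^3D_4(q)$ form in $E_7(q)$ and the $A_1\times S_5$ configuration in $E_8(q)$. One small descriptive caveat: the relevant maximal closed subgroup of $E_7$ is $(2^2\times D_4).S_3$, the normaliser of the Klein four-group (so $\bar{H}^{\circ}=D_4$ but $\bar{H}\neq N_{\bar{G}}(D_4)$, which contains the subsystem subgroup $D_4A_1^3$); this does not affect your conclusion.
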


We present a similar proposition for the subgroups of type (VII). Note that we use ${\rm Lie}(p)$ to denote the set of finite simple groups of Lie type defined over fields of characteristic $p$. The possibilities for $S={\rm soc}(H)$ have been significantly refined in recent years. By combining the main results in recent work of Craven \cite{Crav1,Crav2}, we get the following:
\begin{prop}\label{p:vii}
Let $G$ and $H$ be as in Theorem \ref{t:redsub}, with $H$ of type ${\rm (VII)}$ and ${\rm soc}(H)=S$. Then one of the following holds:
\begin{itemize}
\item[{\rm (i)}] $S\not\in {\rm Lie}(p)$ and the possibilities for $S$ are described in \cite[Tables 10.1-10.4]{LSF}; or 
\item[{\rm (ii)}] $S\in {\rm Lie}(p)$ and one of the following holds:
\begin{itemize}
\item[{\rm (a)}] $G_0=E_8(q)$ and either $S=\Li_2(q_0)$ with $q_0\leqs (2,q-1).1312$ or $$S\in\{\Li^{\epsilon}_3(3),\Li^{\epsilon}_3(4), \Un_3(8),{\rm PSp}_4(2)^{'}, \Un_4(2), {}^2B_2(8)\};$$
\item[{\rm (b)}] $G_0=E_7(q)$ and $S=\Li_2(q_0)$ with $q_0\in\{7,8,25\}$. 
\end{itemize}
\end{itemize}
\end{prop}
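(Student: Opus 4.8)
The statement to be proven is Proposition \ref{p:vii}, which classifies the possibilities for $S = \mathrm{soc}(H)$ when $H$ is a maximal subgroup of type (VII) of an almost simple group $G$ with socle $E_7(q)$ or $E_8(q)$. The text explicitly says this proposition follows ``by combining the main results in recent work of Craven \cite{Crav1,Crav2}'', so the proof is essentially a citation-and-assembly argument rather than a from-scratch construction.

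\medskip

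\noindent\textbf{Proof proposal.} The plan is to separate the two cases according to whether $S \in \mathrm{Lie}(p)$ or not, and in each case cite the appropriate structural classification. First, if $S \notin \mathrm{Lie}(p)$, then $S$ is either an alternating group, a sporadic group, or a group of Lie type in the cross characteristic (i.e.\ defined over a field of characteristic $\ell \neq p$). These possibilities have been tabulated completely by Liebeck and Seitz; one appeals directly to \cite[Tables 10.1--10.4]{LSF}, which list all candidate socles $S$ for almost simple maximal subgroups of exceptional groups with $S \notin \mathrm{Lie}(p)$. This gives part (i) immediately, with no further work. Second, if $S \in \mathrm{Lie}(p)$, one invokes the recent refinements of Craven. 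Here I would recall that the a priori possible groups of Lie type in the defining characteristic that can embed maximally have been dramatically narrowed; in particular, for $G_0 = E_8(q)$ one cites \cite{Crav1} (and/or \cite{Crav2}) to conclude that $S$ must be $\Li_2(q_0)$ with $q_0 \leqslant (2,q-1)\cdot 1312$, or one of the small groups $\Li_3^{\epsilon}(3)$, $\Li_3^{\epsilon}(4)$, $\Un_3(8)$, $\mathrm{PSp}_4(2)'$, $\Un_4(2)$, or ${}^2B_2(8)$; and for $G_0 = E_7(q)$ that $S = \Li_2(q_0)$ with $q_0 \in \{7,8,25\}$. This yields part (ii).

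\medskip

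\noindent The key point to get right is the bookkeeping: one must make sure that the union of the cases in \cite{Crav1,Crav2} (and the earlier literature they build on, e.g.\ \cite{LSF} for the bound on $q_0$ and for eliminating large subfield-type embeddings of $\Li_2$) is exactly the list stated, with no omissions and no spurious entries. In particular one should double-check that the type (VII) hypothesis --- $H$ almost simple and \emph{not} of type (III) or (IV) --- correctly excludes the positive-dimensional and subfield cases, so that the only remaining maximal almost simple subgroups are those covered by the Liebeck--Seitz and Craven analyses; and that the case $G_0 = E_6(q)$ or ${}^2E_6(q)$ is genuinely outside the scope here (Theorem \ref{t:redsub} is stated for $E_7, E_8$, with the earlier exceptional types handled separately via \cite{Crav}). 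One should also confirm the small-rank exclusions built into $\mathrm{Lie}(p)$ conventions (e.g.\ that $\mathrm{PSp}_4(2)'$ appears rather than the non-simple $\mathrm{Sp}_4(2)$, consistent with the notational conventions fixed in the introduction).

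\medskip

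\noindent The main obstacle is not mathematical depth but precise attribution and compatibility of hypotheses: Craven's theorems in \cite{Crav1,Crav2} are stated under their own running hypotheses (e.g.\ on $q$, on the ambient algebraic group being of adjoint type, on $p$), and one must verify that the setup of Theorem \ref{t:redsub} --- $G = (\bar{G}_\sigma)'$ with $\bar{G}$ adjoint and $\sigma$ a Steinberg endomorphism --- matches those hypotheses so that the cited classifications apply verbatim. Once that compatibility is confirmed, the proof is a one-line appeal: ``This follows by combining \cite[Tables 10.1--10.4]{LSF} with the main theorems of \cite{Crav1} and \cite{Crav2}.'' No case-by-case elimination or explicit computation is required on our part.
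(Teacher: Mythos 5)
Your proposal matches the paper exactly: the paper gives no separate proof of Proposition \ref{p:vii}, simply stating that it follows ``by combining the main results in recent work of Craven \cite{Crav1,Crav2}'' together with the Liebeck--Seitz tables \cite[Tables 10.1--10.4]{LSF} for the case $S\not\in{\rm Lie}(p)$. Your citation-and-assembly argument, including the case split on whether $S\in{\rm Lie}(p)$, is precisely the intended justification.
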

 
The list of possibilities for $S$ in (i) of Proposition \ref{p:vii} has been refined further, see Craven \cite{Crav3} and Litterick \cite{Lit}. However for our work the tables in \cite{LSF} are sufficient. 

The following result regarding $G_2(q)$ will be useful for the analysis of both the exceptional and the affine groups. Let $G=G_2(q)$ with $q\geqs 3$ and let $V$ denote the minimal module for $G_2(q)$ (recall we write $q=p^f$ where $p$ is a prime and $f$ is a positive integer). By minimal module for $G_2(q)$ we mean $V$ is an irreducible module of dimension $7-\delta_{2,p}$, where $\delta_{2,p}$ is the Kronecker delta. 
We note that $M={\rm SL}^{\epsilon}_3(q){:}2$ is a maximal subgroup of $G$ and so the index 2 subgroup ${\rm SL}^{\epsilon}_3(q)$ of $M$ naturally embeds in $G$. Additionally, we note that for matricies $A_1,\dots,A_n$ we use ${\rm diag}(A_1,\dots,A_n)$ to denote a block diagonal matrix with blocks $A_1,\dots,A_n$ and $A_i^{-T}$ to denote the inverse transpose of $A_i$.

\begin{lem}\label{l:g2onmin}
Let $G=G_2(q)$ with $q\geqs 3$ and let $V$ be the minimal module of $G$. Take $x\in H={\rm SL}^{\epsilon}_3(q)$ such that $x$ acts as the matrix $A$ on the natural $H$-module. Then up to conjugacy, $x$ acts on $V$ as
\[
\begin{cases}
{\rm diag}(A,A^{-T}) & \mbox{if } q \mbox{ is even}\\
{\rm diag}(A,A^{-T},1) & \mbox{otherwise}
\end{cases}.
\]
\end{lem}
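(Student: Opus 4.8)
The plan is to produce an explicit model for $G = G_2(q)$ acting on its minimal module and to trace through how the subgroup $H = \mathrm{SL}_3^\epsilon(q)$ sits inside it. Recall that $G_2$ is the automorphism group of the split (resp. anti-split for the unitary twist) octonion algebra, and the standard construction realises $V$ as the trace-zero subspace of the octonions, which decomposes under an $A_2$-subsystem subgroup. Concretely, fix the long-root $A_2$ subalgebra: the $7$-dimensional (or $6$-dimensional in characteristic $2$) module restricts to $\mathrm{SL}_3(\bar{\mathbb{F}}_p)$ as $U \oplus U^* \oplus (\text{trivial})$, where $U$ is the natural $3$-dimensional module and $U^*$ its dual, and the trivial summand collapses when $p = 2$ because the Killing-type form degenerates and the Lie algebra module $L(\varpi_1)$ has dimension $6$ rather than $7$. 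This decomposition is classical; I would cite it from the representation theory of $G_2$ (for instance it follows from the weight-space analysis, or from the octonion description in which the $A_2$ acts on the two "quaternion-minus-scalar" pieces as $U$ and $U^*$ and fixes the identity). Once the module structure of the restriction is pinned down, the lemma is essentially immediate: if $x \in H$ acts as $A \in \mathrm{SL}_3^\epsilon(q)$ on $U$, then it acts as $A$ on the $U$-summand and as the contragredient $A^{-T}$ on the $U^*$-summand (and trivially on the $1$-dimensional summand when $q$ is odd), so up to the appropriate choice of basis $x$ has the block form $\mathrm{diag}(A, A^{-T})$ or $\mathrm{diag}(A, A^{-T}, 1)$.

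First I would set up the embedding carefully, distinguishing the two values of $\epsilon$: for $\epsilon = +$ one takes $M = \mathrm{SL}_3(q){:}2 = N_G(A_2)$ where the outer $2$ is the graph automorphism swapping $U$ and $U^*$, and for $\epsilon = -$ one takes $M = \mathrm{SU}_3(q){:}2$, the twisted analogue; in both cases the index-$2$ subgroup $H = \mathrm{SL}_3^\epsilon(q)$ is the connected part and acts as stated. For the unitary case one should note that $\mathrm{SU}_3(q) \leqslant \mathrm{SL}_3(q^2)$ and $V$ is defined over $\mathbb{F}_q$, so the restriction is still $U \oplus U^* \oplus (\text{triv})$ as an $\mathbb{F}_{q^2}$-module with a compatible $\mathbb{F}_q$-structure, and the same matrix description goes through after choosing a hyperbolic-type basis adapted to the unitary form. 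Second, I would verify the characteristic-$2$ exception: here $\dim V = 6$, the invariant symmetric form on the $7$-dimensional module has a radical which in characteristic $2$ is itself a submodule isomorphic to the would-be trivial summand, and the genuine irreducible quotient is $U \oplus U^*$; thus the "$1$" block disappears. Third, I would record that "up to conjugacy" here means up to conjugacy in $\mathrm{GL}(V)$ by an element that can even be taken inside $N_G(H)$ or a fixed change of basis, which is all that is needed for the counting arguments (the lemma is used only to read off cycle-type / eigenvalue data of prime-order elements).

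The main obstacle, such as it is, is pinning down the module decomposition cleanly and with a citable source, especially the interaction with the characteristic-$2$ collapse and the unitary twist; once that is in hand the matrix computation is purely formal. I expect the proof to be short: state the restriction $V\!\downarrow_H \cong U \oplus U^* \oplus (q \text{ odd})\,\mathbf{1}$ with a reference (e.g. to the octonion model or to tables of restrictions of small-rank exceptional modules, such as in Liebeck--Seitz or the $G_2$ literature already cited in the paper), observe that on a complement to the natural module a linear map acts as its inverse transpose, and conclude. I would also remark that the choice of $\pm T$ versus $\pm 1$ eigenvalue bookkeeping is exactly why both the split and unitary forms of $\mathrm{SL}_3$ occur as subgroups of the single group $G_2(q)$, and that this is consistent with $\mathrm{SL}_3^\epsilon(q){:}2$ being maximal in $G_2(q)$ for $q \geqslant 3$, as recorded in the maximal subgroup tables referenced earlier in this section.
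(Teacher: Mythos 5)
Your proposal is correct and follows essentially the same route as the paper: both rest on the restriction of the minimal module to the $A_2$-subgroup being $W\oplus W^{*}$ (plus a trivial summand when $p$ is odd), from which the block form ${\rm diag}(A,A^{-T})$ or ${\rm diag}(A,A^{-T},1)$ is immediate. The paper simply phrases this at the level of the algebraic groups $G_2(k)$ and ${\rm SL}_3(k)$ over $k=\bar{\mathbb{F}}_p$, which absorbs the unitary twist and the characteristic-$2$ collapse in one stroke, whereas you carry out the same decomposition more explicitly via the octonion model and treat $\epsilon=-$ separately.
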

\begin{proof}
We work with the algebraic groups $\bar{G}=G_2(k)$ and $\bar{H}={\rm SL}_3(k)$, where $k=\bar{\mathbb{F}}_p$. Let $\bar{V}=V\otimes k$ denote the minimal module of $\bar{G}$ and $\bar{W}$ the natural $\bar{H}$-module. Then ${\rm dim}\bar{V}= 7 -\delta_{2,p}$ and 
\[
\bar{V}\downarrow \bar{H} =
\begin{cases}
\bar{W}\oplus \bar{W}^* & \mbox{if } q \mbox{ is even}\\
\bar{W}\oplus \bar{W}^* \oplus 0 & \mbox{otherwise}
\end{cases},
\] 
where $\bar{W}^*$ denotes the dual of $\bar{W}$ and $0$ denotes the trivial $\bar{H}$-module. The result now follows immediately since $x$ acts as the matrix $A$ on $\bar{W}$ and so $x$ acts as $A^{-T}$ on $\bar{W}^*$.
\end{proof}

We now present some useful results regarding the conjugacy classes of certain prime order elements in exceptional groups of Lie type.

\begin{prop}\label{p:classesG2}
Let $G=G_2(q)$ with $q\geqs 3$ and for $i\in\{3,6\}$ let $s_i$ denote the largest primitive prime divisor of $q^i-1$. Suppose that $s_i$ is the unique primitive prime divisor of $q^i-1$. Then $G$ contains at least $\frac{s_i-1}{6}$ distinct classes of elements of order $s_i$.
\end{prop}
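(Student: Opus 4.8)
The plan is to exhibit the required classes of elements of order $s_i$ explicitly inside a suitable maximal subgroup, using Lemma~\ref{l:g2onmin} to control how these elements act on the minimal module $V$. Since $s_i$ is a primitive prime divisor of $q^i-1$ for $i\in\{3,6\}$, we have $s_i\mid q^3-1$ in both cases (as $3\mid 6$), and hence $s_i$ divides $|{\rm SL}_3^{\epsilon}(q)|$ for an appropriate sign $\epsilon$: for $i=3$ take $\epsilon=+$ since $s_3\mid q^3-1$ but $s_3\nmid q-1$, and for $i=6$ take $\epsilon=-$ since $s_6\mid q^3+1$. So first I would fix this sign and work inside $H={\rm SL}_3^{\epsilon}(q)\leqslant G$, which embeds in $G$ as the index-$2$ subgroup of the maximal subgroup ${\rm SL}_3^{\epsilon}(q){:}2$. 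Inside $H$ there is a cyclic maximal torus of order $(q^3-\epsilon)/((q-\epsilon)\gcd(3,q-\epsilon))$ which contains elements of order $s_i$; more concretely, $s_i$ divides the order of a Singer-type cyclic subgroup, so $H$ contains elements of order $s_i$ acting irreducibly (over $\mathbb{F}_q$) on the natural $3$-dimensional module $\bar W$.

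Next I would count the $G$-classes these elements fall into. The key point is that if $x\in H$ has order $s_i$ and acts on the natural module as a matrix $A$, then by Lemma~\ref{l:g2onmin}, $x$ acts on $V$ as ${\rm diag}(A,A^{-T})$ (or ${\rm diag}(A,A^{-T},1)$ if $p$ is odd). The eigenvalues of $A$ on $\bar W\otimes k$ are a Galois-orbit $\{\mu,\mu^q,\mu^{q^2}\}$ of primitive $s_i$-th roots of unity (for $i=3$) or the analogous orbit for $i=6$; the eigenvalues of $x$ on $\bar V$ are then this set together with its inverses. Two semisimple elements of the exceptional group $G$ of order coprime to $p$ are $G$-conjugate only if they have the same eigenvalues on $V$ (this is the standard fact that $G$-conjugacy of semisimple elements is controlled by the action on a faithful module, up to the finitely many ambiguities coming from $|N_{\bar G}(\bar T)/\bar T|$ being bounded). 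So the number of $G$-classes of such elements is at least the number of distinct eigenvalue-multisets on $V$ arising this way, divided by a bounded Weyl-group factor. For $G_2$ the Weyl group has order $12$, but the relevant subtlety is that the set of eigenvalues $\{\mu^{\pm 1},\mu^{\pm q},\mu^{\pm q^2}\}$ (union $\{1\}$) depends on $\mu$ only through the orbit of $\mu$ under the group generated by $q$ and $-1$ acting on $(\mathbb{Z}/s_i)^\times$ — an orbit of size dividing $2i/\gcd(\text{something})$, which for $i=3$ has size dividing $6$ and for $i=6$ again the relevant stabiliser computation gives orbits whose size divides $6$. Hence the number of distinct such eigenvalue-multisets is at least $(s_i-1)/6$, and since $s_i$ being the \emph{unique} primitive prime divisor means there is no collapsing from other primes, we get at least $(s_i-1)/6$ distinct $G$-classes.

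So the skeleton is: (1) identify the sign $\epsilon$ and the copy of ${\rm SL}_3^{\epsilon}(q)$ in $G$ containing elements of order $s_i$; (2) parametrise those elements by their eigenvalue $\mu$ on the natural module, a primitive $s_i$-th root of unity; (3) use Lemma~\ref{l:g2onmin} to compute the eigenvalues on $V$, namely the orbit of $\mu$ under $\langle q,-1\rangle$ inside $(\mathbb{Z}/s_i)^\times$ (plus $1$ when $p$ odd); (4) bound the size of that orbit by $6$, so that the $s_i-1$ primitive roots yield at least $(s_i-1)/6$ distinct eigenvalue-multisets; (5) invoke the standard fact that distinct semisimple eigenvalue-multisets on $V$ give distinct $G$-classes, using that $s_i$ is the unique primitive prime divisor to rule out any identification coming from elements outside this torus.

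The main obstacle I anticipate is step (4)–(5): carefully justifying that the orbit of $\mu$ under $\langle q, -1\rangle$ on $(\mathbb{Z}/s_i\mathbb{Z})^\times$ has size at most $6$ in \emph{both} cases $i=3$ and $i=6$, and — more delicately — that two elements of order $s_i$ with the same multiset of eigenvalues on $V$ really are $G$-conjugate rather than merely $\bar G$-conjugate. For the latter one wants the fact that for $x$ of order $s_i$ a primitive prime divisor, the centraliser $C_{\bar G}(x)$ is a (connected) maximal torus, so the component group issues that usually obstruct passing from $\bar G$-conjugacy to $G$-conjugacy are controlled; combined with a Lang–Steinberg argument this pins down the number of $G$-classes fusing into a single $\bar G$-class as a divisor of $|W(G_2)|=12$, and a parity/sign refinement improves $12$ to $6$. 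Everything else is a routine root-of-unity bookkeeping exercise.
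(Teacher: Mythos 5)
Your proposal follows essentially the same route as the paper: realise the elements of order $s_i$ inside ${\rm SL}_3^{\epsilon}(q)\leqs{\rm SL}_3^{\epsilon}(q){:}2<G$ (with $\epsilon=+$ for $i=3$, $\epsilon=-$ for $i=6$), use Lemma \ref{l:g2onmin} to read off their eigenvalues on the minimal module $V$, and count the resulting eigenvalue sets, which fall into orbits of size $6$ (the paper phrases this as $(s_i-1)/3$ classes in $L={\rm SL}_3^{\epsilon}(q)$ via \cite[Proposition 3.3.2]{BG_book}, fused in pairs $[\Lambda_j]\leftrightarrow[\Lambda_j^{-1}]$ by the inverse--transpose automorphism in $M=L.2$, giving $(s_i-1)/6$ $M$-classes), exactly your orbit count under $\langle q,-1\rangle$ in $(\mathbb{Z}/s_i\mathbb{Z})^{\times}$.

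The one point to fix is your step (5) and the ``main obstacle'' you anticipate: none of that machinery is needed, and the phrase ``divided by a bounded Weyl-group factor'' would actually destroy the bound if taken literally. For a \emph{lower} bound on the number of $G$-classes you only need the trivial direction: $V$ is a faithful $G$-module, so elements conjugate in $G$ are conjugate in ${\rm GL}(V)$ and hence have the same eigenvalue multiset on $V$; therefore the $(s_i-1)/6$ distinct eigenvalue multisets already give $(s_i-1)/6$ distinct ${\rm GL}(V)$-classes, hence at least that many $G$-classes. You never need the converse (same multiset $\Rightarrow$ $G$-conjugate), so the worries about $\bar{G}$- versus $G$-conjugacy, connected centralisers, Lang--Steinberg and the vague ``parity/sign refinement improving $12$ to $6$'' can all be deleted; this is precisely how the paper closes the argument. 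With that simplification your outline matches the paper's proof.
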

\begin{proof}
The proof for $i=6$ and $i=3$ are similar, so we only provide details in the case $i=6$. 
Assume that $s_6$ is the unique primitive prime divisor of $q^6-1$. We note that we may view $G$ as a subgroup of ${\rm GL}(V)$ where $V$ is the minimal module for $G_2(q)$. Let $M={\rm SU}_3(q){:}2$, which is a maximal subgroup of $G$ (see \cite[Tables 8.30, 8.41 and 8.42]{BHR} for example). Define $L:={\rm SU}_3(q)<M$ with natural module $W$ and take $x\in M$ to be an element of order $s_6$. Then $x\in L$ since $s_6$ does not divide $|M:L|$. The conjugacy of semisimple prime order elements in $L$ is uniquely determined by the set of eigenvalues of the elements acting on the natural module of $L$ (over an appropriate extension field). By \cite[Proposition 3.3.2]{BG_book}, there are $(s_6-1)/3$ distinct $L$-classes of elements of order $s_6$, each represented by an eigenvalue set $[\Lambda_j]$, where $\Lambda_j=\{\lambda_j,\lambda_j^{q^2},\lambda_j^{q^4}\}$ and $\lambda_j\in\mathbb{F}_{q^6}$ is an $s_6^{{\rm th}}$ root of unity (note $\Lambda_j\neq\Lambda_j^{-1}$). We note that $M=L.\langle \psi \rangle$ where $\psi$ is an automorphism acting as the inverse transpose. Thus the classes represented by $[\Lambda_j]$ and $[\Lambda_j^{-1}]$ are fused in $M$. In particular, there are $(s_6-1)/6$ distinct $M$-classes of elements of order $s_6$. By applying Lemma \ref{l:g2onmin}, it follows that there are at least $(s_6-1)/6$ distinct ${\rm GL}(V)$-classes of such elements and the result follows.
\end{proof}

\begin{prop}\label{p:classes}
Let $G\in\{F_4(q),{}^3D_4(q)\}$ and assume that $q^4-q^2+1=r$ is prime. Then there are $(q^4-q^2)/\alpha$ distinct $G$-classes of elements of order $r$ in $G$, where $\alpha= 4$ if $G={}^3D_4(q)$ and $\alpha=12$ if $G=F_4(q)$.
\end{prop}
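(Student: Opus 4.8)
The plan is to follow the strategy of the proof of Proposition~\ref{p:classesG2}: confine all elements of order $r$ to a single maximal torus, and then count orbits of the associated relative Weyl group. Write $r = q^4 - q^2 + 1 = \Phi_{12}(q)$. Since $r$ is prime it is a primitive prime divisor of $q^{12}-1$, so by Lemma~\ref{l:Lemma A.1} we have $r \equiv 1 \pmod{12}$ (in particular $r \geqs 13$), and from the factorisation of $|G|$ into cyclotomic polynomials evaluated at $q$ one checks that $\Phi_{12}(q)$ divides $|G|$ exactly once, so every Sylow $r$-subgroup of $G$ is cyclic of order $r$.

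First I would invoke the classification of maximal tori of $G$ (see, e.g., \cite{BG_book} and the references given there) to fix a maximal torus $T \leqs G$ with $|T| = \Phi_{12}(q) = r$, unique up to $G$-conjugacy -- the Coxeter torus when $G = F_4(q)$ -- for which $C_G(T) = T$ and $N_G(T)/T$ is cyclic of order $\alpha$, where $\alpha = 12$ (the Coxeter number of $F_4$) if $G = F_4(q)$ and $\alpha = 4$ if $G = {}^3D_4(q)$. The equality $C_G(T) = T$ can also be obtained directly: for a generator $x$ of $T$, the centraliser $C_{\bar G}(x)$ is a connected reductive subgroup of maximal rank (connectedness holds on taking $\bar G$ simply connected) whose associated finite group has order divisible by $\Phi_{12}(q)$; running through the maximal-rank reductive subgroups in types $F_4$ and $D_4$ one finds that only the maximal torus of the relevant type has order divisible by $\Phi_{12}(q)$, so $C_{\bar G}(x) = \bar T$ and hence $C_G(x) = T$. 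Since $|T| = r$, the torus $T$ is itself a Sylow $r$-subgroup of $G$.

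By Sylow's theorem every element of $G$ of order $r$ is $G$-conjugate into $T$, and since $T$ is abelian, Burnside's fusion theorem shows that two elements of $T$ of order $r$ are $G$-conjugate precisely when they are $N_G(T)$-conjugate. Hence the $G$-classes of elements of order $r$ are in bijection with the orbits of $\overline{W} := N_G(T)/C_G(T) = N_G(T)/T$ on $T \setminus \{1\}$, a set of size $r - 1 = q^4 - q^2$. Now $\overline{W}$ is cyclic of order $\alpha$ and, since $C_G(T) = T$, embeds into $\Aut(T) \cong \Z/(r-1)\Z$; as $\alpha \mid r-1$ this embedding identifies $\overline{W}$ with the subgroup of order $\alpha$ of $(\Z/r\Z)^\times$ acting on $T \cong \Z/r\Z$ by multiplication. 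Because $r$ is prime, every nonidentity element of $\overline{W}$ acts on $T \setminus \{1\}$ fixed-point-freely, so all $\overline{W}$-orbits on $T \setminus \{1\}$ have size exactly $\alpha$; therefore there are $(r-1)/\alpha = (q^4 - q^2)/\alpha$ of them, which is the required count.

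The hard part is the structural input isolated in the second paragraph: identifying the maximal torus of order $\Phi_{12}(q)$ and, above all, the order $\alpha$ of its relative Weyl group (the values $12$ for $F_4(q)$ and $4$ for ${}^3D_4(q)$). For this I would lean on the explicit descriptions of the maximal tori and their normalisers in $F_4(q)$ and ${}^3D_4(q)$ available in the literature; granted those facts, the remaining argument is routine Sylow theory together with the observation that a cyclic group acting faithfully on a group of prime order acts freely on its nonidentity elements.
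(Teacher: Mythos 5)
Your argument is correct in substance but follows a genuinely different route from the paper. The paper's proof is a direct table lookup: it identifies the semisimple classes of order $r=q^4-q^2+1$ in the published class lists for ${}^3D_4(q)$ (Deriziotis--Michler \cite{DM}) and for $F_4(q)$ with $q$ even and odd (Shinoda \cite{shin} and Shoji \cite{sho}), and simply reads off that there are $(q^4-q^2)/4$, respectively $(q^4-q^2)/12$, such classes. You instead derive the count structurally: since $r=\Phi_{12}(q)$ is a primitive prime divisor of $q^{12}-1$ and $\Phi_{12}$ divides the order polynomial of both groups exactly once, the maximal torus $T$ of order $\Phi_{12}(q)$ is a cyclic Sylow $r$-subgroup; Burnside fusion then reduces the class count to counting orbits of $N_G(T)/C_G(T)$ on $T\setminus\{1\}$, and faithfulness of the action of a cyclic group of order $\alpha$ on a group of prime order forces all orbits to have size $\alpha$, giving $(r-1)/\alpha$. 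Granting the standard facts that $C_G(T)=T$ and $N_G(T)/T$ is cyclic of order $12$ (Coxeter torus of $F_4$) or $4$ (torus normaliser $(q^4-q^2+1).4$ in ${}^3D_4(q)$, as in Kleidman's list \cite{Kleid-3d4}), this is a complete and arguably more illuminating proof: it explains the shape of the answer and avoids any dependence on the correctness of long character-theoretic tables, at the cost of importing the torus-normaliser data from the literature. It is also closer in spirit to the eigenvalue-counting proof of Proposition \ref{p:classesG2} than the paper's own proof of Proposition \ref{p:classes} is.

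One step in your second paragraph is stated too strongly: it is not true that among the maximal-rank reductive subgroups of $F_4$ only the maximal torus has order divisible by $\Phi_{12}(q)$, since the subsystem subgroup of type $D_4$, suitably twisted, yields ${}^3D_4(q)<F_4(q)$, whose order is divisible by $\Phi_{12}(q)$. This does not break your argument, because an element $x$ of order $r$ is central in $C_{\bar G}(x)$, and the centre of a semisimple group of type $D_4$ has order at most $4<r$, so this case cannot occur; alternatively you can simply quote the known structure $C_G(T)=T$, $N_G(T)/T\cong C_\alpha$ for these two tori, as you propose. With that repair (or with the citation) the proof is sound.
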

\begin{proof}
We inspect the relevant tables in \cite{DM,shin,sho}. Assume first $G={}^3D_4(q)$. By inspection of \cite[Table 2.1]{DM} we see that the only semisimple elements of order $r$ are those labeled $s_{14}$. Then turning to \cite[Table 4.4]{DM} we see that $G$ contains precisely $\frac{1}{4}(q^4-q^2)$ distinct $G$-classes of elements of order $r$. Next let us assume that $G=F_4(q)$ and $q$ is even. Then \cite[Table II]{shin} shows that the elements labeled $h_{76}$ are the only semisimple elements of order $r$ and there are $\frac{1}{12}(q^4-q^2)$ such $G$-classes. Similarly, for $G=F_4(q)$ and $q$ odd, inspection of \cite[Tables 8 and 9]{sho} shows the required elements are labeled by $h_{99}$ and the result follows.
\end{proof}

\subsection{A Reduction Theorem}
We remind the reader that $H_0=H\cap G_0$ and $\pi(X)$ denotes the number of distinct prime divisors of $|X|$. Additionally recall that $x\in G$ is a derangement if and only if $x^G\cap H=\emptyset$. From this definition it is clear to see that $G$ is almost elusive only if $\pi(G_0)-\pi(H_0)\leqs 1$. In work by Liebeck, Praeger and Saxl, \cite[Corollary 5]{LPS}, the subgroups $M$ of a simple group $G$ such that $\pi(G)=\pi(M)$ are described. Here we present an extension of this result, which is an analog of \cite[Theorem 2]{Hall} for classical groups. This result is a key tool in the proof of Theorem \ref{t:main1}. 

\begin{thm}\label{t:pi}
Let $G\leqs {\rm Sym}(\Omega)$ be an almost simple primitive permutation group with point stabiliser $H$ and socle $G_0$, an exceptional group of Lie type over $\mathbb{F}_q$. Then $\pi(G_0)-\pi(H_0)\leqs 1$ if and only if one of the following holds:
\begin{itemize}
\item[{\rm (i)}] $\pi(G_0)=\pi(H_0)$ and $(G_0,H_0)=(G_2(2)',\Li_2(7)),(G_2(3),\Li_2(13))$ or $({}^2F_4(2)', \Li_2(25))$.
\item[{\rm (ii)}] $\pi(G_0)=\pi(H_0) +1$ and $(G_0,H_0)$ is found in Table \ref{tab:maintab2}.
\end{itemize}
\end{thm}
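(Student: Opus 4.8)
The plan is to mirror the strategy of \cite[Theorem 2]{Hall} for classical groups, proceeding by a case analysis over the maximal subgroups $H$ of $G$ according to the structural classification recalled above (Kleidman--Cooperstein for $G_2(q)$, \cite{Mal} for ${}^2F_4(q)$, \cite{Kleid-3d4} for ${}^3D_4(q)$, Craven \cite{Crav} for $F_4(q)$, $E_6(q)$, ${}^2E_6(q)$, Theorem \ref{t:redsub} for $E_7(q)$, $E_8(q)$, and \cite[Tables 8.16, 8.43]{BHR} for the Suzuki and small Ree groups). The backward direction is a finite check: for each pair listed in (i) and in Table \ref{tab:maintab2} one simply computes $\pi(G_0)$ and $\pi(H_0)$ and verifies the inequality, which for the small sporadic-looking cases $G_2(2)'$, $G_2(3)$, ${}^2F_4(2)'$ can be done directly (indeed computationally), and for the infinite families follows from Zsigmondy's theorem (Theorem \ref{t:zsig}) and Lemma \ref{l:Lemma A.1}, which let us exhibit enough primitive prime divisors of $|G_0|$ inside $|H_0|$. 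So the substance is the forward direction.

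For the forward direction I would first invoke \cite[Corollary 5]{LPS}: if $\pi(G_0)=\pi(H_0)$ then $(G_0,H_0)$ already appears on the (short) Liebeck--Praeger--Saxl list, and intersecting that list with the exceptional groups yields exactly the three pairs in (i). This disposes of the equality case essentially for free, so the bulk of the work is the case $\pi(G_0)=\pi(H_0)+1$, i.e. $H_0$ misses exactly one prime divisor of $|G_0|$. The key lever is that $|G_0|$ is divisible by primitive prime divisors $P_q^n$ for the various $n$ attached to $G_0$ (for instance $n\in\{3,6\}$ for $G_2(q)$, the torsion primes for $q^4-q^2+1$ in $F_4(q)$, ${}^3D_4(q)$, ${}^2F_4(q)$, and so on up to $n\in\{18,30\}$ and beyond for $E_8(q)$); by Zsigmondy these sets are non-empty apart from the listed exceptions. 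If $H_0$ misses two of these primitive primes (for distinct $n$ that do not ``collide''), then $\pi(G_0)-\pi(H_0)\geqs 2$ and $H$ is excluded. So for each family of maximal subgroups $H$ one reads off from the order formula $|H_0|$ which cyclotomic values $\Phi_n(q)$ it can absorb and checks that at most one relevant Zsigmondy prime is omitted; in the surviving cases one then pins down the congruence/divisibility conditions on $q$ (and records them in Table \ref{tab:maintab2}), using Lemma \ref{l:Lemma A.1} and, where cube-like cyclotomic values intervene, Proposition \ref{p:blnag} and Lemma \ref{l:ppd2a3} to control the remaining Diophantine possibilities.

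Concretely the case division runs: parabolic subgroups (type (I)) — here the Levi factor is a product of smaller-rank groups of Lie type times a torus, and one checks directly which $\Phi_n(q)$ survive in $|L|$, as in the $E_7$, $P_5$ example in the text; maximal-rank subgroups (type (II)) — handled via the tables \cite[Tables 5.1, 5.2]{LSS} together with the orders of the relevant finite reductive groups; positive-dimensional non-parabolic non-maximal-rank subgroups (type (III)) and their refinement into cases (i)--(iii); subfield subgroups (type (IV)) — here $|H_0|$ is a value of the same polynomial at $q_0<q$, so it misses every Zsigmondy prime of $q^n-1$ whose order $n$ it cannot reproduce over $\mathbb{F}_{q_0}$, which almost always forces $\pi(G_0)-\pi(H_0)\geqs 2$; exotic locals (type (V)), the $E_8$ exception (VI), and almost simple subgroups (type (VII)) — for the last, the socle $S$ runs over the bounded lists of Proposition \ref{p:vii} and \cite[Tables 10.1--10.4]{LSF}, so $|S|$, and hence $\pi(H_0)$, is explicitly bounded and compared against $\pi(G_0)$, which grows with $q$. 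I expect the main obstacle to be the bookkeeping in type (II) and the larger parabolics for $E_7(q)$ and $E_8(q)$, where $|G_0|$ carries many cyclotomic factors $\Phi_1,\Phi_2,\dots,\Phi_{30}$ and one must be careful about ``collisions'' — distinct $n$ whose primitive primes can coincide for special small $q$, or small $q$ (e.g. $q=2,3,4,5$) where Zsigmondy fails and extra ad hoc or computational checks are needed — together with extracting the precise conditions on $q$ that belong in Table \ref{tab:maintab2}.
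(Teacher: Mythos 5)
Your proposal is correct and follows essentially the same route as the paper: a direct comparison of $|G_0|$ (from \cite[p.208]{MT}) with $|H_0|$, run case by case over the classified maximal subgroups, using Zsigmondy primitive prime divisors to show that two primes of $|G_0|$ are missed except in the surviving cases, with Proposition \ref{p:blnag} and related lemmas handling the Diophantine refinements that produce the conditions in Table \ref{tab:maintab2}. Your explicit appeal to \cite[Corollary 5]{LPS} for the equality case $\pi(G_0)=\pi(H_0)$ is consistent with the paper, which presents the theorem precisely as an extension of that result.
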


\begin{table}
\[
\begin{array}{clllcc} \hline
\mbox{Case}& G_0 & H_0 & \mbox{Conditions} & i & r \\ \hline\rule{0pt}{2.5ex} 
{\rm R1} & {}^2G_2(q) & 2\times\Li_2(q) &  & 6 & 7\\
{\rm R1'} & {}^2G_2(3)' & 2^3{:}7 &  & &3\\
{\rm R2'} &  & D_{14}&  & &3\\
{\rm R3'} & & D_{18} &  & 6 &7\\
{\rm G1 }&G_2(q) & {\rm J}_1 & q=11 & 6 &37  \\
{\rm G2} & & {\rm J}_2  & q=4 & 6 &13\\
{\rm G3} & & \Li_2(13)  & q=4 & 2 & 5\\
{\rm G4} & & 2^3.\Li_3(2)  & q=3 & 3 &13\\
{\rm G5} & &{\rm SL}_3(q){:}2 & & 6 &r\\
{\rm G6} &           &{\rm SU}_3(q){:}2 & & 3 &r \\
{\rm G7} &           &{}^2G_2(q) & q=3^f, f \mbox{ is odd}  &3 &r  \\
{\rm G1' }&G_2(2)' & [3^{1+2}]{:}(3^2-1) & & 3 &7  \\
{\rm G2' }& & 4.{\rm PGU}_2(3) & &3&7  \\
{\rm G3' }& & 4^2{:}S_3 & & 3 &7  \\
{\rm D1} & {}^3D_4(q) & [q^9]{:}({\rm SL}_2(q^3)\circ (q-1)).(2,q-1)&  &12  &r \\
{\rm D2} & & G_2(q) &  & 12 &r \\
{\rm D3} &  & \Li_2(q^3)\times\Li_2(q) & q \mbox{ even }  &12 &r \\
{\rm D4} & & ({\rm SL}_2(q^3)\circ{\rm SL}_2(q)).2& q \mbox{ odd}  &12 &r \\
{\rm D5} & & [2^{11}]{:}(7\circ{\rm SL}_2(q))& q=2  &12 &13 \\
{\rm D6} & & (7\circ {\rm SL_3(2)}).7.2 &   q=2  &12 &13 \\
{\rm D7} &  & 7^2.{\rm SL}_2(3) &   q=2 &12 &13 \\
{\rm F1} & F_4(q) & (2,q-1).\Omega_9(q) &  &12&r \\
{\rm F1'} &{}^2F_4(2)' & \Li_3(3){:}2 &   &4 &5 \\
{\rm F2'} & & A_6.2^2 &   & 12 &13 \\
{\rm F3'} & & 5^2{:}4A_4 &   & 12 &13 \\
\hline
\end{array}
\]
\caption{Cases in which $\pi(G_0)-\pi(H_0)\leqs 1$}
\label{tab:maintab2}
\end{table}

\begin{rmk}
The column in Table \ref{tab:maintab2} labeled $i$ indicates the extra condition that there exists a unique primitive prime divisor $r_i$ of $q^i-1$. The column labeled $r$ indicates the unique prime that divides $|G_0|$ and not $|H_0|$. If there is an entry in column $i$ then $r$ is precisely the unique primitive prime divisor of $q^i-1$. In particular, we note that if $i=6$ and $q\neq 19$ then $r=q^2-q+1$ and if $i=12$ then $r=q^4-q^2+1$ (this can be shown using Proposition \ref{p:blnag}).
\end{rmk} 

\begin{proof}[Proof of Theorem \ref{t:pi}]
This can be shown by a direct comparison of the orders of $|G_0|$ (see \cite[p.208]{MT}) and $|H_0|$ (see Section \ref{s:prelimex} for the relevant references).  
The analysis is similar in most instances, so we only provide details for a handful cases:
\begin{itemize}
\item[{\rm (a)}] $G_0={}^2F_4(q)$ and $H_0=a^{\pm}{:}12$ where $a^{\pm}=(q^2\pm\sqrt{2q^3}+q\pm\sqrt{2q}+1)$,
\item[{\rm (b)}] $G_0=E_6(q)$ and $H_0=E_6(q_0).((3,q-1),k)$ where $q=q_0^k$ with $k$ prime,
\item[{\rm (c)}] $G_0={}^3D_4(q)$ and $H_0=G_2(q)$,
\item[{\rm (d)}] $G_0=E_7(q)$ and $H$ is a $P_7$ parabolic subgroup,
\item[{\rm (e)}] $G_0=E_8(q)$ and ${\rm soc}(H)=S=\Li_2(q_0)\in{\rm Lie}(p)$ with $q_0\leqs (2,q-1)1312$.
\end{itemize}

First consider case (a) and note that $|G_0|=q^{12}(q^{6}+1)(q^4-1)(q^3+1)(q-1)$. It is easy to show that both $a^+$ and $a^-$ divide $q^{12}+1$. Thus all prime divisors of $|H_0|$ divide $6(q^{12}+1)$. Take $s_4$ and $s_{12}$ to be the largest primitive prime divisors of $q^4-1$ and $q^{12}-1$ respectively (these both exist by Theorem \ref{t:zsig} and note that $s_{12}$ divides $q^6+1$). By Lemma \ref{l:Lemma A.1} we have $s_4=4d_4+1$ and $s_{12}=12d_{12}+1$ for some positive integers $d_i$, so $s_4,s_{12}\geqs 5$. Additionally, by Lemma \ref{l:Lemma A.1} both $s_4$ and $s_{12}$ are prime divisors of $q^{12}-1$. Thus neither $s_4$ nor $s_{12}$ divide $|H_0|$, implying that $\pi(G_0)-\pi(H_0)\geqs 2$.

Now let us assume we are in case (b). Here we have $|G_0|=\frac{1}{d} q_0^{36k}\prod_{i\in I}(q_0^{ik}-1)$ with $I=\{2,5,6,8,9,12\}$ where $d=(3,q-1)$ and every prime divisor of $|H_0|$ must divide $q_0.\prod_{j\in J}(q_0^{j}-1)$ where $J=\{5,9,12\}$. 
It is clear that primitive prime divisors of $q_0^{12k}-1$ and $q_0^{9k}-1$ divide $|G_0|$ and do not divide $|H_0|$ since $12k,9k\geqs 18$.

Next consider the case (c). Here 
$$|G_0|=q^{12}(q^6-1)^2(q^4-q^2+1)\,\,\mbox{ and }\,\, |H_0|=q^6(q^2-1)(q^6-1).$$
First observe that $r$ is a primitive prime divisor of $q^{12}-1$ if and only if $r$ divides $q^4-q^2+1$. Thus the only prime divisors of $|G_0|$ that do not divide $|H_0|$ are the prime divisors of $q^4-q^2+1$. Therefore $\pi(G_0)-\pi(H_0)=1$ if and only if $q^4-q^2+1=r^l$ for some odd prime $r$ and some positive integer $l$. Using the substitution $x=-q^2$ in Proposition \ref{p:blnag} we deduce there are no solutions for $l\geqs 2$. Thus we must assume $q^4-q^2+1=r$, which leads to case D2 in Table \ref{tab:maintab2}. We note that there are solutions to the equation $q^4-q^2+1=r$ with $r$ prime. For example, we can take $q\in\{2,3,4,9\}$.

Next let us assume we are in case (d). In this case $|G_0|=\frac{1}{d}q^{63}\prod_{i\in I}(q^{i}-1)$ with $I=\{2,6,8,10,12,14,18\}$ and where $d=(2,q-1)$ and $H_0=QL$ where $Q$ is a $p$-group and the prime divisors of $|L|$ divide $|E_6(q)|$. Thus all prime divisors of $|H_0|$ divide $q(q^5-1)(q^8-1)(q^9-1)(q^{12}-1)$. Therefore primitive prime divisors of $q^{14}-1$ and $q^{18}-1$ divide $|G_0|$ and not $|H_0|$. 

Finally assume we are in case (e) and let $q_0=p^t$. Here
$$|G_0|=p^{120f}\prod_{i\in I}(p^{if}-1)\,\,\mbox{ and }\,\,|S|=\frac{1}{d}p^t(p^{2t}-1),$$
where $I=\{2,8,12,14,18,20,24,30\}$ and $d=(2,p^t-1)$. We note that $|H_0|$ divides $|S|dt$. The largest powers of 2 and 3 less than $(2,q-1)1312$ are $2^{10}$ and $3^8$ respectively. Therefore we may assume $t\leqs 10$, so $2t< 24f, 30f$. Thus primitive prime divisors of $p^{24f}-1$ and of $p^{30f}-1$ divide $|G_0|$ and not $|H_0|$. 
\end{proof}

\subsection{Proof of Theorem \ref{t:main1}}\label{ss:ProofOf1}
We are now in a position to prove Theorem \ref{t:main1}. First we note that the cases with $G_0={}^2G_2(q)$, ${}^2G_2(3)'$, ${}^2B_2(q)$, $G_2(2)'$ and ${}^2F_4(2)'$ were handled previously in \cite[Propositions 4.4, 4.12, 5.1 and Sections 4.3 and 4.4]{BHall}. Thus for the remainder of this section we may assume that $G_0$ is not one of these groups. We begin by handling some small groups with the aid of {\sc Magma} \cite{Mag}.

\begin{prop}\label{p:smalldim}
Let $G\leqs {\rm Sym}(\Omega)$ be an almost simple primitive permutation group with point stabiliser $H$ and socle $G_0=G_2(3),G_2(4),G_2(5)$ or ${}^3D_4(2).$ Then $G$ is almost elusive if and only if $(G,H)=(G_2(4).2, J_2.2)$.
\end{prop}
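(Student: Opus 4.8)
The claim concerns four specific socles, $G_2(3)$, $G_2(4)$, $G_2(5)$ and ${}^3D_4(2)$, so the natural plan is a computational case-check in {\sc Magma}, but organised so that Theorem \ref{t:pi} does almost all the work first. The overall strategy is: for each such $G_0$, run through the almost simple groups $G$ with socle $G_0$ (there are only finitely many, indexed by subgroups of the outer automorphism group), and for each $G$ consider the core-free maximal subgroups $H$ up to conjugacy. By the remark preceding the Reduction Theorem, $G$ is almost elusive only if $\pi(G_0)-\pi(H_0)\leqs 1$, so the first step is to extract from Theorem \ref{t:pi} and Table \ref{tab:maintab2} exactly which pairs $(G_0,H_0)$ among these four socles survive this necessary condition. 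From the table the surviving candidates are: for $G_0=G_2(3)$, the cases with $(G_0,H_0)=(G_2(3),\Li_2(13))$ (case (i) of Theorem \ref{t:pi}), together with $\mathrm{G4}$ ($2^3.\Li_3(2)$), $\mathrm{G5}$ (${\rm SL}_3(3){:}2$), $\mathrm{G6}$ (${\rm SU}_3(3){:}2$) and $\mathrm{G7}$ (${}^2G_2(3)$); for $G_0=G_2(4)$, the cases $\mathrm{G2}$ (${\rm J}_2$), $\mathrm{G3}$ ($\Li_2(13)$), $\mathrm{G5}$ and $\mathrm{G6}$; for $G_0=G_2(5)$, only $\mathrm{G5}$ and $\mathrm{G6}$; and for $G_0={}^3D_4(2)$, the cases $\mathrm{D2}$ (${}^2G_2(2)'=G_2(2)$, via the identification $G_2(q)$), $\mathrm{D3}$, $\mathrm{D5}$, $\mathrm{D6}$ and $\mathrm{D7}$. (One should also keep track of whether the relevant $H_0$ is actually maximal in the given $G$ and whether the extra primitive-prime-divisor condition in the $i$-column of the table is met for these tiny $q$.) Every other maximal subgroup is discarded immediately.

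The second step is to test the surviving candidates directly. For each such pair $(G,H)$ one builds the permutation action of $G$ on the cosets of $H$ in {\sc Magma}, computes the conjugacy classes of $G$, and for each prime $r$ dividing $|\Omega|=|G:H|$ counts the number of $G$-classes of elements of order $r$ that meet $H$ versus those that do not; $G$ is almost elusive precisely when there is exactly one $G$-class of derangements of prime order in total. Equivalently, and more cheaply, one can use Lemma \ref{l:classes}: compare $a_r$, the number of $G_0$-classes of elements of order $r$, with $b_r$, the number of $H_0$-classes, for each prime divisor $r$ of $|\Omega|$; whenever $a_r>b_r$ there is a derangement of order $r$, and one must then check no two distinct primes both yield derangement classes and that the unique prime yielding derangements contributes exactly one class. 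For the $G_2(q)$ cases with $H_0={\rm SL}_3^\epsilon(q){:}2$ one can additionally invoke Propositions \ref{p:classesG2} and \ref{p:classes} to see very quickly that the primitive prime divisor $r$ of $q^6-1$ or $q^3-1$ produces many classes of derangements, forcing failure unless the arithmetic is extremely tight — which it is not for $q\in\{3,4,5\}$. This should eliminate all of $\mathrm{G5}$, $\mathrm{G6}$, $\mathrm{G7}$ and the ${}^3D_4(2)$ cases, leaving only the handful of genuinely sporadic candidates $(G_2(3),\Li_2(13))$, $(G_2(3),2^3.\Li_3(2))$, $(G_2(4),{\rm J}_2)$, $(G_2(4),\Li_2(13))$, $(^3D_4(2), G_2(2))$, etc., each of which is small enough to settle by direct computation, with only $(G_2(4).2, {\rm J}_2{:}2)$ surviving.

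The main obstacle is bookkeeping rather than mathematics: one must be careful that (a) the subgroup recorded in Table \ref{tab:maintab2} is being tested inside the correct overgroup $G$ (e.g. ${\rm J}_2$ sits in $G_2(4)$ but almost elusivity is detected only after passing to $G_2(4).2$ with $H={\rm J}_2{:}2$, which is why the statement singles out $G_2(4).2$), and (b) for $G_0={}^3D_4(2)$ the maximal subgroup ${}^2G_2(q)$ in case D2 really means the appropriate small group, and one must not accidentally drop a conjugacy class of subgroups that are not $G$-conjugate but are $\Aut(G_0)$-conjugate. A clean way to avoid pitfall (a) is, for each $G_0$, to loop over \emph{all} $G$ with $G_0\trianglelefteqslant G\leqs\Aut(G_0)$ and all classes of core-free maximal $H\leqs G$, rather than fixing $G=G_0$; since $|\Aut(G_0):G_0|\leqs 2$ here and the maximal subgroups of all these groups are explicitly listed (Kleidman, Cooperstein, and the {\sc Magma} library), this loop is entirely finite and fast. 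The verification that $(G_2(4).2,{\rm J}_2{:}2)$ \emph{is} almost elusive — i.e. that its unique class of prime-order derangements really is unique — is then a single {\sc Magma} computation, and will be recorded as such.
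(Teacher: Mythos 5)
Your proposal is essentially the paper's own proof: the paper settles this proposition by exactly the brute-force {\sc Magma} loop you describe in your final paragraph, constructing all $G$ with $G_0\leqs G\leqs \Aut(G_0)$ via \texttt{AutomorphismGroupSimpleGroup} and \texttt{LowIndexSubgroups}, taking all core-free maximal subgroups $H$, and testing almost elusivity through conjugacy classes and fusion, without any pre-filtering by Theorem \ref{t:pi} (which the paper only invokes afterwards, for the larger groups). Your preliminary filtering is a harmless optimisation, though as written it omits case D1 of Table \ref{tab:maintab2} for $q=2$ (the parabolic $[2^9]{:}{\rm SL}_2(8)$ of ${}^3D_4(2)$, whose primitive-prime-divisor condition holds since $P_2^{12}=\{13\}$); this slip is immaterial because your recommended computation loops over all core-free maximal subgroups anyway.
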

\begin{proof}
In {\sc Magma} \cite{Mag} we use the command \verb|AutomorphismGroupSimpleGroup| to obtain the group $\Aut(G_0)$ as a permutation group (not necessarily on $\Omega$). Using \verb|LowIndexSubgroups| we can construct the groups $G$ such that $G_0\leqs G\leqs\Aut(G_0)$. For each group $G$ we call \verb|MaximalSubgroups| which returns a set of conjugacy classes of maximal subgroups in $G$. Then for each of these maximal subgroups $H$ we check if it is core-free using the command \verb|Core|. We can additionally identify $H$ by checking its order and structure using commands such as \verb|GroupName| and \verb|NormalSubgroups| for example. We can then use the inbuilt {\sc Magma} functions \verb|Classes| and \verb|IsConjugate| to construct a function to determine if a group $G$ acting on the cosets of a subgroup $H$ is almost elusive. For this we compute the conjugacy classes of elements of prime order in $G$ and $H$ using the \verb|Classes| command. Finally, we repeatedly use \verb|IsConjugate| to determine the fusion of $H$-classes in $G$, which allows us to output the classes of prime order derangements. If a unique such class is outputted then we conclude that $(G,H)$ is almost elusive. 
\end{proof}

In the remainder of the section we show that there are no further examples of almost elusive groups. We recall that $(G,H)$ is almost elusive only if $\pi(G_0)-\pi(H_0)\leqs 1$. Thus Theorem \ref{t:pi} reduces the proof of Theorem \ref{t:main1} to the cases G1-G7, D1-D7 and F1 listed in Table \ref{tab:maintab2}. In addition, we note that cases G2-G4 and D5-D7 were handled in Proposition \ref{p:smalldim}.

In the following proposition we prove Theorem \ref{t:main1} for cases G1, G5-G7 in Table \ref{tab:maintab2}. First we state our notation for unipotent elements in ${\rm GL}(V)={\rm GL}_n(q)$ where $q=p^f$. Take $M={\rm GL}_n(q)$ and let $x\in M$ be an element of order $p$. The Jordan form of $x$ on $V$ is denoted as 
$$x=[J_p^{a_p},\dots,J_1^{a_1}],$$
a block diagonal matrix, where $J_k$ denotes a standard unipotent Jordan block of size $k$ and $a_k$ is its multiplicity (note $n=\sum_{i=1}^pia_i$). In particular, if $x,y\in M$, then $x$ and $y$ are conjugate in $M$ if and only if they have the same Jordan form on $V$ (see \cite[Lemma 3.1.14]{BG_book} for example).  

\begin{prop}\label{p:G2}
Let $G\leqs {\rm Sym}(\Omega)$ be an almost simple primitive permutation group with point stabiliser $H$ and socle $G_0$ as in case G1, G5, G6 or G7 in Table \ref{tab:maintab2}. Then $G$ is not almost elusive.
\end{prop}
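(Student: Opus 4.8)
The plan is to show, for each of the cases G1, G5, G6, G7, that $G$ contains at least two conjugacy classes of derangements of prime order, by exhibiting a prime $r$ (the prime $r$ recorded in the relevant row of Table~\ref{tab:maintab2}, which is the unique prime dividing $|G_0|$ but not $|H_0|$) together with many $G_0$-classes of elements of order $r$, all of which are automatically derangements since no element of order $r$ lies in any conjugate of $H$. By Lemma~\ref{l:classes}, it then suffices to show $a_r > b_r = 0$, i.e. that $G_0$ has at least two classes of elements of order $r$ — or more precisely at least two $G$-classes, which requires a little care about fusion under the outer automorphisms in $G/G_0$. Since a single class of derangements of prime order would still permit almost elusivity, the point is to produce genuinely more than one.

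The core input for the $G_2(q)$ cases is Proposition~\ref{p:classesG2}: when $s_i$ is the unique primitive prime divisor of $q^i-1$ (for $i=3$ or $6$), $G=G_2(q)$ has at least $(s_i-1)/6$ classes of elements of order $s_i$. For case G5 we take $i=6$, $r=s_6$ (a p.p.d.\ of $q^6-1$, which by the remark after Table~\ref{tab:maintab2} equals $q^2-q+1$ unless $q=19$); for case G6 we take $i=3$, $r=s_3$; for case G7 (${}^2G_2(q)\leqs G_2(q)$ with $q=3^f$, $f$ odd) we again use $i=3$. First I would note that $r$ does not divide $|H_0|$ (this is exactly why these rows appear in Table~\ref{tab:maintab2}), so every element of order $r$ in $G_0$ is a derangement; I would then need $(s_i-1)/6 \geqs 2$, i.e.\ $s_i \geqs 13$, together with an argument that these $G_0$-classes do not all fuse to a single $G$-class. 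Since $|G:G_0|$ divides $2$ for $G_0=G_2(q)$ (well, $f$ times $2$ for field automorphisms, but field automorphisms permute the classes in a controlled way — here one uses that $r$ being a p.p.d.\ constrains how Frobenius acts on $r$-th roots of unity, as in Lemma~\ref{l:Lemma A.1}), the number of $G$-classes is at least $(s_i-1)/(12f)$ or similar, and one checks this exceeds $1$; the small residual cases where $s_i$ is small (or where $(s_i-1)/6 = 1$) are handled by invoking Lemma~\ref{l:ppd2a3}, which lists precisely the pairs $(n,q)$ with $r = dnf+1$ for small $d$, and in each such case either the case has already been dealt with in Proposition~\ref{p:smalldim} (e.g.\ $q=3,4,5$) or $|P_q^n|>1$ so the hypothesis of Proposition~\ref{p:classesG2} fails and $r$ must be rechosen as the largest p.p.d., which is then large enough.

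For case G1 ($G_0=G_2(11)$, $H_0={\rm J}_1$, $r=37$, $i=6$): here $q=11$, and $37 = q^2-q+1 = 111/3$... wait, $11^2-11+1 = 111 = 3\cdot 37$, so $r=37$ is the unique p.p.d.\ of $11^6-1$; by Proposition~\ref{p:classesG2}, $G_2(11)$ has at least $(37-1)/6 = 6$ classes of elements of order $37$, and since $37 \nmid |{\rm J}_1|$ these are all derangements. As $|{\rm Aut}(G_2(11)):G_2(11)| = 2$ and no field automorphisms arise ($f=1$), at least $3$ $G$-classes of derangements of order $37$ remain, so $G$ is not almost elusive. I expect the main obstacle to be the fusion bookkeeping: making sure that the $(s_i-1)/6$ classes supplied by Proposition~\ref{p:classesG2} really do break into more than one $G$-class once the graph and field automorphisms in $G/G_0$ are taken into account. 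This is handled by the standard fact that an automorphism of $r$-power-free order acts on the $\sim s_i$-many classes of semisimple $r$-elements with orbits of bounded size (the graph automorphism acts as inversion, identified in the proof of Proposition~\ref{p:classesG2}; a field automorphism of order dividing $f$ acts via $\lambda\mapsto\lambda^p$, with orbits of size dividing $i$), so the number of $G$-classes is still linear in $s_i$ and hence $\geqs 2$ for all $q$ not already excluded, completing the proof.
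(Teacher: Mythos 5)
Your general strategy is exactly the paper's: take the prime $r=r_i$ from Table \ref{tab:maintab2} (so every element of order $r_i$ in $G_0$ is a derangement), use Proposition \ref{p:classesG2} to get at least $(r_i-1)/6$ classes of such elements in $G_0$, divide by the index of $G_0$ in ${\rm Aut}(G_0)$ to control fusion, and invoke Lemma \ref{l:ppd2a3} to see that the resulting bound is at least $2$ outside a short list of small $(i,q)$. (Minor quibbles: for $p\neq 3$ there is no graph automorphism of $G_2(q)$, so $|{\rm Aut}(G_2(11)):G_2(11)|=1$, not $2$ — harmless here — and the ``inversion'' fusing $\Lambda$ with $\Lambda^{-1}$ in the proof of Proposition \ref{p:classesG2} comes from the inverse-transpose automorphism inside ${\rm SU}_3(q){:}2$, not from a graph automorphism of $G_2$.)

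The genuine gap is in your treatment of the residual cases, which are precisely G5 with $q=8$ and $q=19$ (case G5 is the only one where Lemma \ref{l:ppd2a3} leaves $\beta=1$ after the small values $q\leqs 5$ are removed by Proposition \ref{p:smalldim}). Your proposed fallback — that these cases are either covered computationally or have $|P_q^6|>1$, forcing a larger re-chosen primitive prime divisor — fails on both counts: $q=8$ and $q=19$ are not among the groups treated in Proposition \ref{p:smalldim}, and in both cases the primitive prime divisor of $q^6-1$ \emph{is} unique (it is $19$ for $q=8$, since $q^2-q+1=57=3\cdot 19$ with $3$ dividing $q^2-1$, and it is $7$ for $q=19$, since $q^2-q+1=343=7^3$), so the hypothesis of Proposition \ref{p:classesG2} holds but the count only guarantees one $G$-class of derangements of order $r_6$. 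To finish one must exhibit a \emph{second} class of prime-order derangements by other means, which is what the paper does: for $q=19$ it compares Jordan forms on the minimal module via Lemma \ref{l:g2onmin} and \cite[Table 1]{Law} to show the unipotent class $\tilde{A}_1$ (Jordan form $[J_3,J_2^2]$) of elements of order $19$ misses $H_0={\rm SL}_3(19){:}2$, and for $q=8$ it compares the single $H_0$-class of elements of order $3$ (via \cite[Proposition 3.2.1]{BG_book}) with the two $G_0$-classes and applies Lemma \ref{l:classes}. Without an argument of this kind your proof does not dispose of these two groups.
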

\begin{proof}
 Let $V$ denote the minimal $G_0$-module. Here $G_0=G_2(q)$ and by Proposition \ref{p:smalldim} we may assume $q\geqs 7$. We note that in all cases there exists a unique primitive prime divisor, $r_i$, of $q^i-1$ where $i=6$ for cases G1 and G5, and $i=3$ for cases G6 and G7. We note that $r_i$ is also a primitive prime divisor of $p^{fi}-1$, so by Lemma \ref{l:Lemma A.1} we may write $r_i=ifd_i+1$, where $d_i$ is a positive integer. Additionally, we note that every element in $G_0$ of order $r_i$ is a derangement.

By Proposition \ref{p:classesG2}, $G_0$ contains at least $ifd_i/6$ conjugacy classes of derangements of order $r_i$. Note $|{\rm Out}(G_0):G_0|=(1+\delta_{(3,p)})f$. Thus $G$ contains at least $\beta$ distinct classes of derangements of order $r_i$ where 
\[
\beta :=
\begin{cases}
id_i/12 & \mbox{if } p=3\\
id_i /6 & \mbox{otherwise}.
\end{cases}
\] 
By Lemma \ref{l:ppd2a3}, $\beta\geqs 2$ if and only if $(q,i)\neq (8,6),(19,6)$. Thus we conclude $G$ is not almost elusive for cases G1, G6, G7 and for case G5 with $q\neq 8, 19$. It remains to handle the final cases for G5, where $H_0={\rm SL}_3(q){:}2$ and $q=8$ or 19. 

Assume first that $q=19$. Suppose $x\in H_0$ is an element of order $19$. Then $x\in{\rm SL}_3(q)$ must have Jordan form $[J_2,J_1]$ or $[J_3]$ on the natural 3-dimensional module. Thus by Lemma \ref{l:g2onmin}, $x$ has Jordan form $[J_2^2,J_1^3]$ or $[J_3^2,J_1]$ on $V$ respectively. By inspecting \cite[Table 1]{Law}, the elements in $G_0$ of order 19 are in the class labeled $\tilde{A}_1$ and have Jordan form $[J_3,J_2^2]$ on $V$. Therefore there exist derangements of order 19.

Finally assume $q=8$, so 3 is the unique primitive prime divisor of $q^2-1$. By \cite[Proposition 3.2.1]{BG_book} there is a unique class of elements of order 3 in $H_0$. It is easy to check using {\sc Magma} that there are two distinct classes of elements of order 3 in $G_0$. Thus by Lemma \ref{l:classes}, $G_0$ contains derangements of order 3. 
\end{proof}

\begin{prop}
Let $G\leqs {\rm Sym}(\Omega)$ be an almost simple primitive permutation group with point stabiliser $H$ and socle $G_0$ in cases D1-D4 in Table \ref{tab:maintab2}. Then $G$ is not almost elusive. 
\end{prop}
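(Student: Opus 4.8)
The plan is to show, uniformly across cases D1--D4, that $G$ contains at least two conjugacy classes of derangements of order $r$, where $r$ is the prime recorded in the final column of Table~\ref{tab:maintab2}; this immediately gives that $G$ is not almost elusive. First I would dispose of the case $q=2$, where $G_0={}^3D_4(2)$ and the statement follows from Proposition~\ref{p:smalldim}, so from now on assume $q\geqs 3$. In each of D1--D4 the condition attached in Table~\ref{tab:maintab2} (the entry $i=12$) is that $q^{12}-1$ has a unique primitive prime divisor $r$, and, as observed in the remark following Theorem~\ref{t:pi}, this forces $r=q^4-q^2+1$; in particular $r$ is prime. Since $r$ is a primitive prime divisor of $q^{12}-1$, and hence of $p^{12f}-1$, Lemma~\ref{l:Lemma A.1} gives $r=12fd+1$ for some positive integer $d$, so $r\geqs 12f+1$. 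As $r$ is, by the table, the unique prime dividing $|G_0|$ but not $|H_0|$, and as $|H|/|H_0|$ divides $|{\rm Out}(G_0)|=3f<r$, it follows that $r\nmid|H|$; hence no conjugate of $H$ contains an element of order $r$, i.e.\ every element of $G$ of order $r$ is a derangement.

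Next I would count the $G$-classes of elements of order $r$. By Proposition~\ref{p:classes}, which applies precisely because $r=q^4-q^2+1$ is prime, the subgroup $G_0={}^3D_4(q)$ has exactly $(q^4-q^2)/4$ distinct $G_0$-classes of elements of order $r$. Two such elements are conjugate in $G$ exactly when their $G_0$-classes lie in a common orbit of $G/G_0$ acting (by conjugation) on the set of $G_0$-classes, and each such orbit has size dividing $|G/G_0|$, which divides $3f$. Hence $G$ contains at least $(q^4-q^2)/(12f)$ classes of elements of order $r$, all of them derangements by the previous paragraph.

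It then remains to verify the elementary inequality $(q^4-q^2)/(12f)\geqs 2$ for all $q=p^f\geqs 3$: when $f=1$ this is clear since $q^4-q^2\geqs 72$, and when $f\geqs 2$ one has $q\geqs 2^f$, so that $q^2(q^2-1)\geqs 2^{2f}(2^{2f}-1)\geqs 24f$, using $2^{2f}\geqs 4f$ and $2^{2f}-1\geqs 6$. Thus $G$ has at least two classes of derangements of prime order and so is not almost elusive. I do not expect any serious obstacle: the only point needing care is that the hypothesis of Proposition~\ref{p:classes} (primality of $q^4-q^2+1$) is guaranteed by the table conditions, and the small cases are already handled by Proposition~\ref{p:smalldim}; everything else is bookkeeping with field automorphisms together with the above estimate.
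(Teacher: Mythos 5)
Your proposal is correct and follows essentially the same route as the paper: reduce to $q\geqs 3$ via Proposition \ref{p:smalldim}, note that primality of $r=q^4-q^2+1$ makes every element of order $r$ a derangement, invoke Proposition \ref{p:classes} to get $(q^4-q^2)/4$ classes in $G_0$, divide by $|{\rm Out}(G_0)|=3f$ to bound the number of $G$-classes below by $(q^4-q^2)/12f$, and check this is at least $2$. Your extra remarks (that $r\nmid |H|$ and the explicit verification of the inequality) are just fuller bookkeeping of steps the paper leaves implicit.
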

\begin{proof}
By Proposition \ref{p:smalldim} we may assume $q\geqs 3$. Here we are assuming $q^4-q^2+1=r$ is prime and in all cases any element in $G_0$ of order $r$ is a derangement (since $r$ divides $|G_0|$ but not $|H_0|$). By Proposition \ref{p:classes} there are $(q^4-q^2)/4$ distinct $G_0$-classes of derangements of order $r$ in $G_0$. Since $|\Aut(G_0):G_0|=3f$, where $q=p^f$, there are at least $(q^4-q^2)/12f$ distinct $G$-classes of derangements of order $r$. It is easy to check that $(q^4-q^2)/12f\geqs 2$ for all $q\geqs 3$. 
\end{proof}

\begin{prop}
Let $G\leqs {\rm Sym}(\Omega)$ be an almost simple primitive permutation group with point stabiliser $H$ and socle $G_0$ in case F1 in Table \ref{tab:maintab2}. Then $G$ is not almost elusive. 
\end{prop}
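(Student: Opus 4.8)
The plan is to exhibit at least two $G$-classes of derangements of prime order. In case~F1 we have $G_0=F_4(q)$ and $H_0=(2,q-1).\Omega_9(q)$, and by the remark following Table~\ref{tab:maintab2} the unique primitive prime divisor $r$ of $q^{12}-1$ equals $q^4-q^2+1$ (in particular this integer is prime); moreover $r$ divides $|G_0|$ but not $|H_0|$. Since $r$ is also a primitive prime divisor of $p^{12f}-1$, where $q=p^f$, Lemma~\ref{l:Lemma A.1} gives $r\equiv 1\pmod{12f}$, so $r$ is coprime to $|\Aut(G_0):G_0|$, which equals $f$ if $p$ is odd and $2f$ if $p=2$. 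Consequently every element of $G$ of order $r$ lies in $G_0$ and is a derangement (as $r\nmid|H|$), and each $G$-class of such elements is a union of at most $|\Aut(G_0):G_0|\leqs 2f$ distinct $G_0$-classes.

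Next I would apply Proposition~\ref{p:classes}: since $r=q^4-q^2+1$ is prime, $G_0$ has exactly $(q^4-q^2)/12$ distinct $G_0$-classes of elements of order $r$. Combining this with the previous step, $G$ has at least $(q^4-q^2)/(24f)$ distinct $G$-classes of derangements of order $r$. An elementary estimate gives $(q^4-q^2)/(24f)\geqs 2$ for every $q\geqs 3$, since $q^4-q^2=q^2(q^2-1)$ grows far faster than $f$ when $q=p^f$; hence $G$ is not almost elusive whenever $q\geqs 3$.

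The only value left is $q=2$, where $G_0=F_4(2)$ and $H_0=\Omega_9(2)\cong{\rm Sp}_8(2)$, and here Proposition~\ref{p:classes} yields just the single class of derangements of order $r=13$. I would settle this computationally with {\sc Magma}~\cite{Mag}, exactly as in the proof of Proposition~\ref{p:smalldim}: since $|\Aut(F_4(2)):F_4(2)|=2$ there are only the two candidates $G=F_4(2)$ and $G=F_4(2).2$, and computing the prime-order classes of $G$ and of $H$ together with their fusion exhibits a second class of prime-order derangements. (By Lemma~\ref{l:classes} it suffices to find a prime $s\in\{2,3,7\}$ for which $H_0$ has strictly fewer classes of elements of order $s$ than $G_0$.) Thus $G$ is not almost elusive in this case either.

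The main obstacle is not conceptual: the argument is routine given Proposition~\ref{p:classes} and the bound on $|\Aut(G_0):G_0|$, and the only genuinely special point is the degenerate small case $G_0=F_4(2)$, where the generic class count collapses to a single class and a short computer check is required.
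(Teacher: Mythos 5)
Your proposal is correct and follows essentially the same route as the paper: count the $(q^4-q^2)/12$ classes of order-$r$ derangements in $G_0$ via Proposition \ref{p:classes}, divide by the index of $G_0$ in $G$ to get at least two $G$-classes for $q\geqs 3$, and dispose of $q=2$ by a {\sc Magma} computation as in Proposition \ref{p:smalldim}. The only cosmetic difference is that the paper excludes graph automorphisms (so $|G:G_0|\leqs f$) by citing \cite{LSS}, whereas you absorb them into the cruder bound $|G:G_0|\leqs 2f$, which still yields $(q^4-q^2)/(24f)\geqs 2$ for all $q\geqs 3$.
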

\begin{proof}
Here $H_0=(2,q-1).\Omega_9(q)$ and we note we may assume that $G$ contains no graph automorphisms by \cite{LSS} (also see \cite[Tables 7.1 and 7.2]{Crav}).  
For $q=2$ we proceed as in the proof of Proposition \ref{p:smalldim}, so we may assume $q\geqs 3$. 
In this case, $q^4-q^2+1=r$ is a prime and any element in $G_0$ of order $r$ is a derangement. By Proposition \ref{p:classes} there are $(q^4-q^{2})/12$ distinct $G_0$-classes of elements of order $r$ in $G_0$. Since $|G:G_0|\leqs f$, there are at least $(q^4-q^{2})/12f\geqs 2$ distinct $G$-classes of derangements. The result follows.  
\end{proof}

This completes the proof of Theorem \ref{t:main1}. Moreover, in view of \cite{BHall} and \cite{Hall}, this completes the proof of the classification of the primitive almost simple almost elusive groups.  

\section{Affine Groups}\label{s:affine}
 
In this section we prove Theorem \ref{t:mainaffine}, which classifies the primitive almost elusive affine groups. Take $p$ to be a prime and let $V=(\mathbb{F}_p)^d$ be a $d$-dimensional vector space over $\mathbb{F}_p$. An \emph{affine transformation} of $V$ is a map $t_{h,v}:V\longrightarrow V$ with $h\in{\rm GL}(V)$ and $v\in V$ such that $t_{h,v}(u):=hu+v$. These affine transformations form the \emph{affine general linear group}, which is denoted ${\rm AGL}(V)$ or ${\rm AGL}_d(p)$, which we can view as a permutation group on $V$. The socle of ${\rm AGL}(V)$ may be identified with the additive group on $V$, which is isomorphic to $(C_p)^d$, and we say $G$ is an \emph{affine group} on $V$ if 
$$V\trianglelefteqslant  G=V{:}H\leqs {\rm AGL}(V),$$ 
where $H\leqs{\rm GL}(V)$ is the stabiliser of the zero vector in $V$. Additionally, an affine group $G$ is primitive if and only if $H$ is an irreducible subgroup of ${\rm GL}(V)$. For the remainder of this section, we will assume that $G=V{:}H\leqs{\rm AGL}(V)$ is a primitive affine group and we will use $(v,h)$ to denote the elements of $G$ where $v\in V$ and $h\in H$. 

\subsection{Preliminary results}\label{ss:afineprelims}

Throughout this section we let $V^* = V\setminus \{0\}$. We begin by discussing the prime order derangements in affine groups. Note that $|V|=p^d$, so every prime order derangement has order $p$. Any element of the form $(v,1)\in G$ such that $v\in V^*$ is a derangement of order $p$, since it acts as a translation by $v$ on $V$. Additionally, every element in $H$ has fixed points. In fact, we can precisely determine when an element of $G$ is a prime order derangement.

\begin{lem}\label{l:dercond}
An element $(v,h)\in G$ is a derangement of order $p$ if and only if all the following conditions are satisfied:
\begin{itemize}
\item[{\rm (i)}] $h^p=1$,
\item[{\rm (ii)}] $v\in {\ker}(h^{p-1}+\dots +h+1)$,
\item[{\rm (iii)}] $v\not\in {\rm im}(h-1)$.
\end{itemize}
\end{lem}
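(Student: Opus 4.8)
The plan is to analyze directly when the affine transformation $(v,h)\colon u\mapsto hu+v$ has a fixed point, and to couple this with the order condition. First I would observe that $(v,h)$ has order $p$ (a prime, here necessarily equal to $p$ since $|V|=p^d$) precisely when $h^p=1$ together with a condition ensuring the ``translation part'' of $(v,h)^p$ vanishes; a routine induction gives $(v,h)^k = \big((h^{k-1}+\dots+h+1)v,\, h^k\big)$, so $(v,h)^p = \big(\nu(h)v, h^p\big)$ where $\nu(h):=h^{p-1}+\dots+h+1$. Hence $(v,h)$ has order dividing $p$ iff $h^p=1$ and $\nu(h)v=0$, i.e. $v\in\ker\nu(h)$. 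If $v\neq 0$ this forces order exactly $p$; if $v=0$ it has order $p$ iff $h\neq1$, but then it fixes $0$, so it is never a derangement — so for derangements we may as well absorb this into the fixed-point analysis below. This yields conditions (i) and (ii).

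Next I would characterize when $(v,h)$ is fixed-point-free. The element $(v,h)$ fixes $u\in V$ iff $hu+v=u$, i.e. $(h-1)u=-v$, which is solvable for $u$ iff $-v\in\mathrm{im}(h-1)$, equivalently (since $\mathrm{im}(h-1)$ is a subspace) iff $v\in\mathrm{im}(h-1)$. Therefore $(v,h)$ is a derangement iff $v\notin\mathrm{im}(h-1)$, which is condition (iii). Combining: $(v,h)$ is a derangement of order $p$ iff (i) $h^p=1$, (ii) $v\in\ker\nu(h)$, and (iii) $v\notin\mathrm{im}(h-1)$. One should note (ii) is in fact automatically implied once $h^p=1$ and we are over a field of characteristic $p$, because $\nu(h)=(h-1)^{p-1}$ in $\mathbb{F}_p[h]$ via the Frobenius identity $X^p-1=(X-1)^p$ in $\mathbb{F}_p[X]$, so $\ker\nu(h)\supseteq\mathrm{im}(h-1)$ would... — actually here I would be careful: $\nu(h)v=0$ is genuinely needed to pin down that the order is not larger, and it is not redundant with (iii); rather (iii) says $v\notin\mathrm{im}(h-1)$ while (ii) under $h^p=1$ says $v\in\ker(h-1)^{p-1}$, and these are compatible (the containment $\mathrm{im}(h-1)\subseteq\ker(h-1)^{p-1}$ can be proper only if $h=1$, but for $h\ne1$ Jordan blocks show it is generally proper), so both must be stated.

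The argument is essentially a direct computation, so there is no serious obstacle; the only point requiring mild care is the bookkeeping that (ii) is the correct way to express ``$(v,h)^p=1$'' rather than some weaker or stronger statement, and making sure the $v=0$, $h=1$ edge cases are handled consistently (when $h=1$, (iii) reads $v\notin\{0\}$, recovering that nontrivial translations are exactly the derangements with $h=1$; when $v=0$, (iii) fails, so no derangements, consistent with the fact that elements of $H$ fix $0$). I would present it as: prove the formula for $(v,h)^k$ by induction, deduce (i) and (ii) as equivalent to having order (dividing) $p$, then prove the fixed-point criterion to get (iii), and finally assemble the equivalence, flagging the characteristic-$p$ simplification $\nu(h)=(h-1)^{p-1}$ as a remark if convenient but not relying on it for the statement.
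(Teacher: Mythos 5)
Your proof is correct and follows essentially the same route as the paper: compute $(v,h)^p=(\nu(h)v,h^p)$ to obtain conditions (i)--(ii), and analyse fixed points via $(h-1)u=-v$ to obtain (iii). The only blemish is the digressive aside on whether (ii) is redundant --- the claim that $\mathrm{im}(h-1)\subseteq\ker(h-1)^{p-1}$ can be proper only if $h=1$ is false (it is proper whenever $h$ has a Jordan block of size less than $p$) --- but since you explicitly do not rely on that remark, the argument stands as written.
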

\begin{proof}
Let $g=(v,h)$ be a nontrivial element of $G$. Since $G$ acts on $V$ via affine transformations it is easy to see that $g$ is a derangement if and only if $u\neq hu+v$ for all $u\in V$. That is $v\not\in {\rm im}(h-1)$. 
Now, $g^p=(v+hv+h^2v+\dots +h^{p-1}v,h^p)$, so $g$ has order $p$ if and only if $h^p=1$ and $(h^{p-1}+\dots +h+1)(v)=0$, where $h^{p-1}+\dots +h+1 \in {\rm Hom}(V,V)$. That is, $v\in {\rm ker}(h^{p-1}+\dots +h+1)$. The result follows. 
\end{proof}

By \cite[Theorem 1]{BHall}, every primitive almost elusive affine group is 2-transitive. We provide a proof of this elementary observation for completeness.  

\begin{lem}\label{l:2transred}
$G$ is almost elusive only if $G$ is 2-transitive.
\end{lem}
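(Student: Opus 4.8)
The plan is to prove the contrapositive: if $G = V{:}H$ is a primitive affine group that is \emph{not} $2$-transitive, then $G$ has at least two conjugacy classes of derangements of order $p$, hence is not almost elusive. Since $G$ is primitive, $H$ acts irreducibly on $V$, but by assumption $H$ does not act transitively on $V^* = V \setminus \{0\}$. So $H$ has at least two orbits on $V^*$, say containing vectors $v_1$ and $v_2$ lying in distinct $H$-orbits.

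The key observation is that the translations $(v,1)$ for $v \in V^*$ are always derangements of order $p$ by Lemma~\ref{l:dercond} (conditions (i)--(iii) are trivially satisfied when $h = 1$, since $\ker(0) = V$ and $\mathrm{im}(0) = \{0\}$). So it suffices to show that $(v_1, 1)$ and $(v_2, 1)$ are not conjugate in $G$. First I would check that two translations $(v,1)$ and $(w,1)$ are conjugate in $G = V{:}H$ if and only if $w \in v^H$, the $H$-orbit of $v$: conjugating $(v,1)$ by $(u,h) \in G$ gives $(hv + (\text{something in }\mathrm{im}(h-1)\cdot\text{-type term}), 1)$ — more precisely, a direct computation with affine transformations shows $(u,h)(v,1)(u,h)^{-1} = (hv, 1)$, since the $V$-part $u$ gets cancelled (translations commute with each other). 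Hence the $G$-conjugacy class of $(v,1)$ is exactly $\{(w,1) : w \in v^H\}$.

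Given this, since $v_1$ and $v_2$ lie in distinct $H$-orbits on $V^*$, the elements $(v_1,1)$ and $(v_2,1)$ lie in distinct $G$-conjugacy classes, so $G$ has at least two classes of derangements of order $p$ and is not almost elusive. This proves the contrapositive, and hence the lemma. I do not expect any serious obstacle here — the only point requiring a little care is the conjugacy computation $(u,h)(v,1)(u,h)^{-1} = (hv,1)$, which needs the correct convention for composition in $\mathrm{AGL}(V)$ as fixed earlier in the section (elements written $(v,h)$ with $t_{h,v}(u) = hu+v$), but this is a routine verification. One could alternatively phrase the whole argument by noting that the number of $G$-classes of order-$p$ derangements is at least the number of $H$-orbits on $V^*$, which is $1$ exactly when $G$ is $2$-transitive.
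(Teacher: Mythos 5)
Your proposal is correct and is essentially the paper's argument in contrapositive form: both rest on the facts that the translations $(v,1)$, $v\in V^*$, are derangements of order $p$ and that their $G$-conjugacy classes correspond exactly to the $H$-orbits on $V^*$ (the paper phrases this as $v^G=v^{VH}=v^H$ since $V$ is abelian, which is the same computation as your $(u,h)(v,1)(u,h)^{-1}=(hv,1)$). No gaps; the approaches coincide.
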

\begin{proof}
Assume $G$ is almost elusive. Recall that any element in $G$ of the form $(v,1)$ such that $v\in V^*$ is a derangement of order $p$, that is every nontrivial element of $V$ is a derangement of order $p$. 
Since $G$ is almost elusive the elements of this form must all be conjugate in G, that is $V^*=v^G$ for all $v\in V^*$. Additionally since, $G=VH$ and $V$ is abelian we have $v^G=v^{VH}=v^H$ for all $v\in V^*$. 
This implies that $H$ acts transitively on $V^*$, that is $G$ is 2-transitive. 
\end{proof}

The 2-transitive affine groups have been determined by Hering \cite{Hering}. Other convenient sources for this result are \cite[Appendix 1]{Lieb} and \cite[Section 7.3]{Cam}. 

\begin{thm}[Hering's Theorem]\label{t:Hering}
 Let $G=V{:}H$ be a 2-transitive affine group of degree $n=p^d$. Then $(n,H)$ is one of the cases in Table \ref{tab:2trans}.
\end{thm}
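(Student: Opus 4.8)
The plan is to treat Theorem \ref{t:Hering} as a citation to the literature rather than to reprove it: this is precisely Hering's classification of the transitive linear groups, and a complete proof is given in \cite{Hering}, with streamlined accounts in \cite[Appendix 1]{Lieb} and \cite[Section 7.3]{Cam}. Accordingly, the ``proof'' here amounts to transcribing the list of possibilities for $(n,H)$ from those sources into Table \ref{tab:2trans} and reconciling conventions. Nevertheless, let me indicate the structure of the underlying argument, which rests on the classification of finite simple groups.

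The starting point is that, since $G$ is 2-transitive, $H$ acts transitively on $V^*=V\setminus\{0\}$; thus $n-1=p^d-1$ divides $|H|$ and, more importantly, $H$ is an irreducible (indeed transitive) linear group, so its structure is severely constrained. One analyses $L=H\cap\mathrm{SL}(V)$ and its generalised Fitting subgroup $F^*(L)$. If $F^*(L)$ is an abelian homogeneous normal subgroup, one is quickly pushed into the $\Gamma\mathrm{L}_1$ case (case (i)); otherwise one examines the layer $E(L)$. Either $E(L)=1$ and $L$ is soluble, which after bounding $|L|$ against $p^d-1$ leaves only finitely many $p^d$ and the ``normaliser of an extraspecial (or symplectic-type) group'' configurations, accounting for the sporadic lines of Table \ref{tab:2trans} with $p^d\in\{5^2,7^2,11^2,19^2,23^2,29^2,59^2,2^4,3^4\}$; or $E(L)$ is a single quasisimple group $Q$ (a product of several conjugate components is ruled out by the transitivity of $H$ on $V^*$ via an order and coprimality count). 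In the latter case one runs through the possible quasisimple $Q$ admitting a small-dimensional projective representation over $\bar{\mathbb{F}}_p$ using the known lists of such representations, and checks in each instance whether $Q$ can be transitive on the nonzero vectors; this leaves the generic families $\mathrm{SL}_a(q)$, $\mathrm{Sp}_{2a}(q)$ and $G_2(q)'$ (the latter with $p=2$), together with the exceptional small cases involving $\mathrm{SL}_2(5)$, $\mathrm{SL}_2(13)$, $A_6$, $A_7$ and similar groups occurring for $p^d\in\{3^4,3^6,\dots\}$.

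The hard part is exactly this last step: establishing that for a quasisimple $Q$ the only transitive linear actions on $V^*$ are the stated ones. This requires the full strength of the classification of finite simple groups (to enumerate the candidate quasisimple groups and their small modular representations) together with detailed module-theoretic and arithmetic arguments to eliminate the numerous near-misses. Since all of this is carried out in \cite{Hering,Lieb,Cam}, I do not reproduce it; in the body of the paper I simply invoke Theorem \ref{t:Hering} and then work case by case through Table \ref{tab:2trans}, applying Lemma \ref{l:dercond} within each family to decide which groups are almost elusive.
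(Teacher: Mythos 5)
Your approach matches the paper exactly: Theorem \ref{t:Hering} is stated there without proof, as a citation of Hering's classification \cite{Hering} (with \cite[Appendix 1]{Lieb} and \cite[Section 7.3]{Cam} as convenient sources), and your transcription-plus-reconciliation of Table \ref{tab:2trans} is precisely how the paper uses it. Your additional sketch of the underlying CFSG-based argument is a reasonable summary but is not needed, since the paper likewise defers entirely to the literature.
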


\begin{table}
\[
\begin{array}{clll} \hline
& n & H & \mbox{Conditions}\\ \hline\rule{0pt}{2.5ex} 
{\rm I} & p^d &H\leqs\Gamma{\rm L}_1(p^d) & \\
{\rm II} & q^a & {\rm SL}_a(q)\trianglelefteqslant H\leqs \Gamma{\rm L}_a(q) & a\geqs 2  \\
{\rm III} & q^a & {\rm Sp}_a(q)\trianglelefteqslant H & a\geqs 4 \mbox{ even}\\
{\rm IV} & q^6 & {\rm G}_2(q)' \trianglelefteqslant H & q \mbox{ even} \\
{\rm V} & 5^2, 7^2,11^2,23^2 & {\rm SL}_2(3)\trianglelefteqslant H &\\
{\rm VI}& 3^4 & 2^{1+4}\trianglelefteqslant H &\\
{\rm VII} & 3^4,11^2,19^2,29^2,59^2 & {\rm SL}_2(5)\trianglelefteqslant H &\\
{\rm VIII} & 2^4 & A_6, A_7 & \\
{\rm IX} & 3^6 & {\rm SL}_2(13) & \\
\hline
\end{array}
\]
\caption{Groups arising in Hering's Theorem.}
\label{tab:2trans}
\end{table}

For the remainder of Section \ref{ss:afineprelims} we will establish some results regarding the Jordan form of elements of order $p$ in $H$. We note that will use the same notation for Jordan form as described in Section \ref{ss:ProofOf1}. These results will be particularly useful when $H$ belongs to one of the infinite families in Hering's Theorem (see cases I-IV in Table \ref{tab:2trans}). 
For these infinite families we show that there are no almost elusive examples for cases III and IV in Hering's Theorem (see Propositions \ref{p:CaseIII} and \ref{p:CaseIV}), but that there do exist almost elusive groups for cases I and II (see Propositions \ref{p:CaseI} and \ref{p:CaseII}). 
Note that every subgroup appearing in II of Table \ref{tab:2trans} is 2-transitive (see \cite[pg. 55]{Dix-Mort}), and detailed information on the exact structure of the 2-transitive groups in case I can be found in \cite[Section 15]{Foulser}. Additionally, for the groups in cases V-IX we use computational methods in {\sc Magma}, see Proposition \ref{p:smalldims}.

\begin{prop}
Let $h\in H$ be an element of order $p$ with Jordan form $[J_p^{a_p},\dots,J_1^{a_1}]$ on $V$. Then there exists an element $v\in V^*$ such that $(v,h)\in G$ is a derangement of order $p$ if and only if $a_i>0$ for some $1\leqs i\leqs p-1$.
\end{prop}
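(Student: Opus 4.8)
The plan is to apply Lemma \ref{l:dercond}, which tells us that a derangement of order $p$ of the form $(v,h)$ exists precisely when $h^p=1$ (automatic here since $h$ has order $p$), $v\in\ker(h^{p-1}+\dots+h+1)$, and $v\notin\operatorname{im}(h-1)$. So the task reduces to a purely linear-algebraic question about the single nilpotent-plus-identity operator $h$ on $V$: decide when $\ker(h^{p-1}+\dots+h+1)\not\subseteq\operatorname{im}(h-1)$. First I would reduce to a single Jordan block. Writing $V=\bigoplus_k V_k$ where $V_k$ is the sum of the Jordan blocks of size $k$, both $\operatorname{im}(h-1)$ and $\ker(h^{p-1}+\dots+h+1)$ respect this decomposition, so it suffices to analyse each $J_k$ separately and then observe that a suitable $v$ with a nonzero component in the summand $V_i$ (for the relevant $i$) works iff it works in that summand.

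For a single Jordan block $J_k$ of size $k\leqs p$, set $N=J_k-1$, a nilpotent of index exactly $k$, so $h=1+N$. Then $\operatorname{im}(h-1)=\operatorname{im}(N)$, which is the hyperplane spanned by the first $k-1$ standard basis vectors (equivalently, everything except the ``top'' coordinate), and has codimension $1$ in this block. Next I would compute $h^{p-1}+\dots+h+1=\sum_{j=0}^{p-1}(1+N)^j$. In characteristic $p$ this equals $\frac{(1+N)^p-1}{(1+N)-1}=\frac{N^p}{N}=N^{p-1}$ (using $(1+N)^p=1+N^p$ in characteristic $p$). Hence condition (ii) becomes $N^{p-1}v=0$, i.e. $v\in\ker(N^{p-1})$. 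So on the block $J_k$ we need $\ker(N^{p-1})\not\subseteq\operatorname{im}(N)$. If $k\leqs p-1$ then $N^{p-1}=0$ on the block, so $\ker(N^{p-1})$ is the whole block while $\operatorname{im}(N)$ is a proper subspace — the inclusion fails, giving a derangement. If $k=p$ then $\ker(N^{p-1})=\operatorname{im}(N)$ (both are the span of the first $p-1$ basis vectors), so the inclusion holds and no $v$ supported in that block works; any $v$ with support only in size-$p$ blocks lies in $\operatorname{im}(h-1)$, hence fails (iii). Assembling: a valid $v$ exists iff $V$ has at least one Jordan block of size $\leqs p-1$, i.e. iff $a_i>0$ for some $1\leqs i\leqs p-1$, which is exactly the claim.

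I expect the only mildly delicate point to be the characteristic-$p$ identity $\sum_{j=0}^{p-1}(1+N)^j=N^{p-1}$ and the careful handling of the degenerate case $k=p$, where $\ker(N^{p-1})$ and $\operatorname{im}(N)$ coincide rather than one strictly containing the other — this is what forces the exclusion of the all-size-$p$-blocks case and is the crux of the ``only if'' direction. Everything else is routine bookkeeping with the block decomposition, noting that $h-1$ acts blockwise so $\operatorname{im}(h-1)=\bigoplus_k\operatorname{im}(N|_{V_k})$ and $\ker(\sum h^j)=\bigoplus_k\ker(N^{p-1}|_{V_k})$, and that a vector avoids $\operatorname{im}(h-1)$ iff at least one of its block components avoids the corresponding image.
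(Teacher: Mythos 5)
Your proposal is correct. It rests on the same two pillars as the paper's argument: the reduction via Lemma \ref{l:dercond} to the linear-algebra question of when $\ker(h^{p-1}+\dots+h+1)\not\subseteq{\rm im}(h-1)$, and the eventual observation that the inclusion ${\rm im}(h-1)\leqs\ker(h^{p-1}+\dots+h+1)$ is an equality exactly when every Jordan block has size $p$. Where you differ is in how the kernel is computed. The paper works in explicit coordinates: it writes out the matrices $h^i$ with binomial-coefficient entries and uses the divisibility $\sum_{i=t}^{p-1}\binom{i}{t}=\binom{p}{t+1}\equiv 0\pmod p$ to read off membership in the kernel and in ${\rm im}(h-1)$ coordinate by coordinate. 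You instead identify the operator $h^{p-1}+\dots+h+1$ as $(h-1)^{p-1}$, via the characteristic-$p$ polynomial identity $\sum_{j=0}^{p-1}(1+x)^j=x^{p-1}$ in $\mathbb{F}_p[x]$, and then the block-by-block comparison of $\ker(N^{p-1})$ with ${\rm im}(N)$ is immediate (whole block versus a hyperplane when the block size is at most $p-1$; coincidence of the two when the block has size $p$). Your route is more conceptual and arguably cleaner, since it makes the containment and its failure transparent without any coordinate bookkeeping; the paper's route is more pedestrian but entirely self-contained at the level of matrix entries. One small point of hygiene: you should phrase the identity $\sum_{j=0}^{p-1}(1+N)^j=N^{p-1}$ as cancellation of $x$ in the polynomial ring $\mathbb{F}_p[x]$ (an integral domain) before substituting the nilpotent $N$, rather than as literal division by $N$, which is not invertible; as written, "$N^p/N$" is an abuse of notation, though the underlying identity is correct and easily checked. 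With that phrasing fixed, the blockwise assembly you describe (images and kernels decompose along the block decomposition, and $v\notin{\rm im}(h-1)$ iff some block component escapes the corresponding image) completes the proof exactly as claimed.
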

\begin{proof}
Fix an $\mathbb{F}_p$-basis $\{v_1,\dots,v_d\}$ of $V$ such that $h=[J_p^{a_p},\dots,J_1^{a_1}]$. By Lemma \ref{l:dercond}, there exists a $v\in V^*$ such that $(v,h)\in G$ is a derangement of order $p$ if and only if there exists a $v\in V^*$ such that $v\in{\ker}(h^{p-1}+\dots +h+1)$ and $v\not\in {\rm im}(h-1)$.

We note that we can write any $v\in V^*$ as $v=c_1v_1+\dots+c_dv_d$ for some $c_1,\dots,c_d\in \mathbb{F}_p$, and that $h^i=[(J_p^i)^{a_p},\dots,(J_1^i)^{a_1}]$, with 

$$J_k^i= \begin{pmatrix} 1 & \binom{i}{1} & \binom{i}{2} &\dots & \binom{i}{k-1}\\
0 & 1 & \binom{i}{1} &\dots &\binom{i}{k-2}\\
0 & 0 & 1 & \dots &  \binom{i}{k-3}\\
\vdots &\vdots&  \vdots &\ddots  & \vdots\\
0&0 &0 & \dots&  1\end{pmatrix},$$
where we take $\binom{a}{b}=0$ if $a<b$ or $b\leqs 0$. 
 
Since $\sum_{i=t}^{p-1}\binom{i}{t}=\binom{p}{t+1}$ is divisible by $p$ for all $1\leqs t \leqs p-2$, it is easy to show that $v\in {\rm ker}(h^{p-1}+\dots +h+1)$ if and only if either $a_p=0$, or $c_{kp}=0$ for all $1\leqs k\leqs a_p$.
Similarly, $v\in{\rm im}(h-1)$ if and only if for all $1\leqs i\leqs p$ either $a_i=0$, or $c_{ki+t}=0$ for all $1\leqs k \leqs a_i$, where
\[
t=
\begin{cases}
\sum_{j=i+1}^pa_jj & \mbox{if } i<p\\
0 & \mbox{if } i=p
\end{cases}.
\]
Thus it is clear to see that ${\rm ker}(h^{p-1}+\dots +h+1)\geqs {\rm im}(h-1)$ with equality if and only if $a_i=0$ for all $1\leqs i\leqs p-1$. That is there exists $v\in V^*$ such that $v\in{\ker}(h^{p-1}+\dots +h+1)$ and $v\not\in {\rm im}(h-1)$ if and only if $a_i>0$ for some $1\leqs i\leqs p-1$. Thus the result follows.
\end{proof}

\begin{cor}\label{c:ae}
Let $G=V{:}H$ be a 2-transitive affine group of degree $p^d$. Then $G$ is almost elusive if and only if one of the following holds: 
\begin{itemize}
\item[{\rm (i)}] $|H|$ is indivisible by $p$; or
\item[{\rm (ii)}] Both $|H|$ and $d$ are divisible by $p$, and every $h\in H$ of order $p$ has Jordan form $[J_p^{d/p}]$ on $V$.
\end{itemize}
\end{cor}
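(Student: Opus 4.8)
The plan is to deduce Corollary~\ref{c:ae} directly from the preceding Proposition together with Lemma~\ref{l:2transred} and the elementary observations about derangements in affine groups recorded at the start of Section~\ref{ss:afineprelims}. Recall that every prime order derangement in $G=V{:}H$ has order $p$, that every nontrivial translation $(v,1)$ is such a derangement, and that these translations form a single $H$-orbit precisely because $G$ is $2$-transitive (Lemma~\ref{l:2transred}). So the non-translation prime order derangements, namely the elements $(v,h)$ with $h\neq 1$ satisfying the conditions of Lemma~\ref{l:dercond}, are exactly what governs whether $G$ is almost elusive: $G$ is almost elusive if and only if there are \emph{no} derangements $(v,h)$ of order $p$ with $h\neq 1$. (One should note $(v,h)$ with $h\ne 1$ is never $G$-conjugate to a translation $(w,1)$, since conjugation in $G=V{:}H$ preserves the $H$-component up to $H$-conjugacy.)

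First I would record the forward direction. If $|H|$ is indivisible by $p$, then there is no $h\in H$ of order $p$, so by Lemma~\ref{l:dercond} the only prime order derangements are the translations, which form one class; hence $G$ is almost elusive. This gives case (i). For case (ii): suppose $p\mid|H|$, $p\mid d$, and every $h\in H$ of order $p$ has Jordan form $[J_p^{d/p}]$ on $V$. By the preceding Proposition, such an $h$ admits a $v\in V^*$ with $(v,h)$ a derangement of order $p$ if and only if $a_i>0$ for some $1\leqs i\leqs p-1$; but for Jordan form $[J_p^{d/p}]$ we have $a_i=0$ for all $i<p$, so no such $v$ exists. Thus again the only prime order derangements are the translations, and $G$ is almost elusive.

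For the converse, suppose $G$ is almost elusive but $p\mid|H|$; I must show we are in case (ii). Pick any $h\in H$ of order $p$, say with Jordan form $[J_p^{a_p},\dots,J_1^{a_1}]$. If $a_i>0$ for some $1\leqs i\leqs p-1$, then by the Proposition there is a derangement $(v,h)$ of order $p$ with $h\neq 1$, which (not being conjugate to a translation) gives a second class of prime order derangements, contradicting almost elusivity. Hence $a_i=0$ for all $i\leqs p-1$, i.e.\ $h=[J_p^{a_p}]$ with $a_p=d/p$; in particular $p\mid d$. Since this holds for every $h\in H$ of order $p$, we are exactly in case (ii). Combining the two directions completes the proof.

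The only subtlety—and the one point I would make sure to state explicitly rather than leave implicit—is the claim that a derangement of the form $(v,h)$ with $h\neq 1$ cannot be $G$-conjugate to a translation $(w,1)$; this is immediate from the semidirect product structure (the projection $G\to H$ is a homomorphism, so conjugate elements have conjugate $H$-components), but it is what lets us conclude that the existence of even one non-translation prime order derangement destroys almost elusivity. No serious obstacle is expected beyond bookkeeping: the real content is in the preceding Proposition, and Corollary~\ref{c:ae} is essentially a restatement of it once one knows almost elusivity for $2$-transitive affine groups is equivalent to the absence of non-translation derangements of order $p$.
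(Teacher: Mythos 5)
Your proof is correct and follows the route the paper intends: the corollary is stated as an immediate consequence of the preceding proposition, Lemma \ref{l:dercond} and the observation that the translations form a single class of derangements of order $p$ under $2$-transitivity, and your argument (including the explicit remark that a derangement $(v,h)$ with $h\neq 1$ cannot be conjugate to a translation, via the projection $G\to H$) is exactly this intended deduction. No gaps.
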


Next we briefly discuss the embedding of ${\rm GL}_{d/k}(q^k)$ in ${\rm GL}_d(q)$, where $k\geqs 1$ is a divisor of $d$.
Let $V_{\#}$ be a $d/k$-dimensional vector space over $\mathbb{F}_{q^k}$. Then we may view $V_{\#}$ as a $d$-dimensional vector space $V$ over $\mathbb{F}_q$. Additionally, any $\mathbb{F}_{q^k}$-linear transformation of $V_{\#}$ is also an $\mathbb{F}_{q}$-linear transformation of $V$, which yields an embedding of ${\rm GL}_{d/k}(q^k)$ in ${\rm GL}_d(q)$. 
We state the following lemma, which is a direct consequence of the normal basis theorem (see, for instance, \cite[Theorem 2.35]{LN}). 

\begin{lem}\label{l:nbt}
Let $d$ and $k$ be positive integers, such that $k$ divides $d$. Consider the vector spaces $V=(\mathbb{F}_q)^d$ and $V_\#=(\mathbb{F}_{q^k})^{d/k}$, where $q=p^f$ with $p$ prime and $f\geqs 1$. Fix an $\mathbb{F}_{q^k}$-basis $\{v_1,\dots,v_{d/k}\}$ of $V_{\#}$. Then there exists a scalar $\lambda\in \mathbb{F}_{q^k}\setminus \mathbb{F}_q$ such that
$$\{\lambda v_1, \lambda^q v_1,\dots\dots , \lambda^{q^{k-1}}v_1,\dots,\lambda v_{d/k}, \lambda^q v_{d/k},\dots , \lambda^{q^{k-1}}v_{d/k}\}$$
is an $\mathbb{F}_q$-basis for $V$.
\end{lem}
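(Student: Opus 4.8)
The plan is to deduce Lemma~\ref{l:nbt} directly from the normal basis theorem applied one factor at a time. First I would recall the statement we are allowed to invoke: for the field extension $\mathbb{F}_{q^k}/\mathbb{F}_q$, the normal basis theorem (cf.\ \cite[Theorem 2.35]{LN}) guarantees the existence of an element $\lambda\in\mathbb{F}_{q^k}$ whose Galois conjugates $\lambda,\lambda^q,\dots,\lambda^{q^{k-1}}$ form an $\mathbb{F}_q$-basis of $\mathbb{F}_{q^k}$. In particular such a $\lambda$ cannot lie in $\mathbb{F}_q$ (if it did, all its conjugates would be equal, and $k$ of them could only span a $1$-dimensional space, forcing $k=1$; and even the $k=1$ case is harmless since then $\mathbb{F}_{q^k}=\mathbb{F}_q$ and we may pick $\lambda$ freely — but to keep the statement uniform one just notes $\lambda\notin\mathbb{F}_q$ holds whenever $k\geqs 2$, and for $k=1$ the asserted basis is already $\{v_1,\dots,v_{d/k}\}$, so the lemma is vacuous). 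So the nontrivial content is entirely in the case $k\geqs 2$.

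Next I would fix the given $\mathbb{F}_{q^k}$-basis $\{v_1,\dots,v_{d/k}\}$ of $V_\#$ and set $e_{ij}=\lambda^{q^{j-1}}v_i$ for $1\leqs i\leqs d/k$ and $1\leqs j\leqs k$, which is the claimed list, of total length $k\cdot(d/k)=d=\dim_{\mathbb{F}_q}V$. Since the cardinality of the list already equals $\dim_{\mathbb{F}_q}V$, it suffices to show the $e_{ij}$ are $\mathbb{F}_q$-linearly independent. Suppose $\sum_{i,j}c_{ij}e_{ij}=0$ with $c_{ij}\in\mathbb{F}_q$. Grouping by $i$ and writing $\mu_i=\sum_{j=1}^k c_{ij}\lambda^{q^{j-1}}\in\mathbb{F}_{q^k}$, the relation becomes $\sum_{i=1}^{d/k}\mu_i v_i=0$; since $\{v_i\}$ is an $\mathbb{F}_{q^k}$-basis of $V_\#$, each $\mu_i=0$. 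But $\mu_i=0$ says $\sum_j c_{ij}\lambda^{q^{j-1}}=0$ with $c_{ij}\in\mathbb{F}_q$, and because $\{\lambda,\lambda^q,\dots,\lambda^{q^{k-1}}\}$ is an $\mathbb{F}_q$-basis of $\mathbb{F}_{q^k}$ by the normal basis theorem, we get $c_{ij}=0$ for all $i,j$. This proves linear independence, hence the list is an $\mathbb{F}_q$-basis of $V$.

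There is essentially no obstacle here: the whole argument is a two-step reduction (pass from an $\mathbb{F}_{q^k}$-basis to an $\mathbb{F}_q$-basis by tensoring the basis of $V_\#$ with a normal basis of $\mathbb{F}_{q^k}$ over $\mathbb{F}_q$). The only point worth a sentence of care is the identification of $V_\#$, viewed as a $d/k$-dimensional $\mathbb{F}_{q^k}$-space, with $V=(\mathbb{F}_q)^d$ as an $\mathbb{F}_q$-space via restriction of scalars — but this is exactly the embedding already described in the paragraph preceding the lemma, so I would simply refer back to it rather than re-derive it. I would keep the write-up to a short paragraph mirroring the computation above, citing \cite[Theorem 2.35]{LN} for the existence of $\lambda$ and for the fact that its conjugates form an $\mathbb{F}_q$-basis of $\mathbb{F}_{q^k}$.
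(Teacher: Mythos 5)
Your proof is correct and is exactly the deduction the paper intends: the paper offers no written proof of Lemma~\ref{l:nbt}, merely asserting it is a direct consequence of the normal basis theorem \cite[Theorem 2.35]{LN}, and your argument (take a normal basis element $\lambda$, tensor with the $\mathbb{F}_{q^k}$-basis of $V_\#$, and check linear independence plus cardinality) is the standard way to fill in those details. Your side remark on the degenerate case $k=1$ is also sensible, since the paper only ever applies the lemma with $k\geqs 2$.
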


The following result is \cite[Lemma 5.3.2]{BG_book}.

\begin{lem}\label{l:jf1}
Let $d$ and $k$ be positive integers, such that $k$ divides $d$. Let $h\in{\rm GL}_{d/k}(p^k)$ be an element of order $p$ with Jordan form $[J_p^{a_p},\dots,J_1^{a_1}]$ on $V_{\#}=(\mathbb{F}_{p^k})^{d/k}$. Then $h$ has Jordan form $[J_p^{ka_p},\dots,J_1^{ka_1}]$ on $V=(\mathbb{F}_p)^d$.
\end{lem}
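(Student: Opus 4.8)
The plan is to reduce the statement to a direct computation with the explicit $\mathbb{F}_q$-basis supplied by Lemma \ref{l:nbt} (applied with $q=p$, so $q^k=p^k$). First I would fix an $\mathbb{F}_{p^k}$-basis $\{v_1,\dots,v_{d/k}\}$ of $V_\#$ in which $h$ has Jordan form $[J_p^{a_p},\dots,J_1^{a_1}]$; after reordering we may assume the $v_j$ are grouped into Jordan blocks, so that within a block of size $s$ we have $h v_j = v_j + v_{j-1}$ (with the convention that the "first" vector of the block is sent to itself). The key point is that $h$ is $\mathbb{F}_{p^k}$-linear, hence in particular $\mathbb{F}_p$-linear, so its action on the chain $\lambda v_j, \lambda^p v_j,\dots,\lambda^{p^{k-1}}v_j$ (for any fixed $\lambda\in\mathbb{F}_{p^k}\setminus\mathbb{F}_p$ as in Lemma \ref{l:nbt}) is governed entirely by $h v_j$: one has $h(\lambda^{p^i} v_j) = \lambda^{p^i}(h v_j)$, because $\lambda^{p^i}$ is a scalar.

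Next I would observe that the $\mathbb{F}_p$-subspace $U_j := \mathrm{span}_{\mathbb{F}_p}\{\lambda v_j,\lambda^p v_j,\dots,\lambda^{p^{k-1}}v_j\}$ is simply the $\mathbb{F}_p$-span of the single $\mathbb{F}_{p^k}$-vector $v_j$, i.e. $U_j = \mathbb{F}_{p^k}\, v_j$ viewed as a $k$-dimensional $\mathbb{F}_p$-space (this is exactly what the normal basis theorem gives us: the $k$ conjugates of $\lambda$ form an $\mathbb{F}_p$-basis of $\mathbb{F}_{p^k}$). Consequently, for a Jordan block of $h$ on $V_\#$ spanned by $v_1,\dots,v_s$ (renaming indices within the block), the $\mathbb{F}_p$-space $U_1\oplus\cdots\oplus U_s$ is $h$-invariant of $\mathbb{F}_p$-dimension $ks$, and on it $h$ acts "block-diagonally" with respect to the decomposition into the $k$ subspaces $\mathbb{F}_p\langle \lambda^{p^i}v_1,\dots,\lambda^{p^i}v_s\rangle$ for $i=0,\dots,k-1$; on each such subspace the matrix of $h$ is exactly the single Jordan block $J_s$, since $h(\lambda^{p^i}v_\ell)=\lambda^{p^i}v_\ell + \lambda^{p^i}v_{\ell-1}$. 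Summing over all Jordan blocks of $h$ on $V_\#$, an $s$-block contributes $k$ copies of $J_s$ on $V$, which is precisely the claim $[J_p^{ka_p},\dots,J_1^{ka_1}]$.

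I expect the only mild obstacle to be bookkeeping: making sure the reordering of the basis is compatible across blocks and that the convention for which end of a Jordan chain is the fixed vector is used consistently, so that the matrix one writes down is literally $J_s$ and not its transpose (which is harmless up to conjugacy, but should be stated cleanly). Since the lemma is quoted from \cite[Lemma 5.3.2]{BG_book}, the cleanest writeup is to give this short argument as a sketch and cite the reference for the full details; alternatively one can phrase it representation-theoretically by noting that extension of scalars turns a single Jordan block for $h$ on $V_\#$ into a module whose restriction to $\mathbb{F}_p$ is the direct sum of the $k$ Frobenius-twisted copies, each again a single Jordan block of the same size because $h$ has order $p=\mathrm{char}$ and its Jordan type is a purely $\mathbb{F}_p$-rational invariant.
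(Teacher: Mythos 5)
Your proposal is correct and follows essentially the same route as the paper: fix a Jordan basis of $V_\#$, pass to the $\mathbb{F}_p$-basis $\beta$ given by Lemma \ref{l:nbt}, and use the $\mathbb{F}_{p^k}$-linearity of $h$ (so the scalars $\lambda^{p^i}$ pull out) to read off, after reordering $\beta$, that each size-$s$ block contributes $k$ copies of $J_s$. Your explicit decomposition into the $k$ invariant subspaces $\mathbb{F}_p\langle \lambda^{p^i}v_1,\dots,\lambda^{p^i}v_s\rangle$ simply makes precise the ``appropriate ordering of the basis $\beta$'' that the paper leaves as an easy verification.
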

\begin{proof}
Fix an $\mathbb{F}_{p^k}$-basis $\{v_1,\dots,v_{d/k}\}$ of $V_{\#}$ such that $h=[J_p^{a_p},\dots,J_1^{a_1}]\in{\rm GL}_{d/k}(p^k)$. Then by Lemma \ref{l:nbt} there is an $\mathbb{F}_p$-basis for $V$, 
$$\beta=\{\lambda v_1, \lambda^p v_1,\dots\dots , \lambda^{p^{k-1}}v_1,\dots,\lambda v_{d/k}, \lambda^p v_{d/k},\dots , \lambda^{p^{k-1}}v_{d/k}\},$$
such that $\lambda\in \mathbb{F}_{p^k}\setminus \mathbb{F}_p$. Since we know how $h$ acts on the basis vectors in $\{v_1,\dots,v_{d/k}\}$ and $h$ is linear over $\mathbb{F}_{p^k}$, it is easy to see how $h$ acts on the basis vectors in $\beta$. Then with respect to an appropriate ordering of the basis $\beta$, we have $h=[J_p^{ka_p},\dots,J_1^{ka_1}]$ as required. 
\end{proof}

The final results of this preliminary section concern the general semilinear group and we first recall the structure of this group. Let $d$ and $k$ be positive integers and fix a prime $p$. 
Let $\{u_1,\dots,u_d\}$ be a basis for the natural module $U=(\mathbb{F}_{p^k})^d$ of ${\rm GL}_d(p^k)$. 
We can define a map, $\gamma: U \mapsto U$, where
\begin{equation*}\label{e:phibasis}
\gamma:\sum_i \lambda u_i \mapsto \sum_i \lambda^p u_i.
\end{equation*}
Then $\gamma$ induces a field automorphism, $\phi:{\rm GL}_d(p^k)\mapsto {\rm GL}_d(p^k)$, where $(a_{ij})^\phi=(a_{ij}^p)$ for all $(a_{ij})\in {\rm GL}_d(p^k)$. The general semilinear group is defined to be $\Gamma{\rm L}_d(p^{k})={\rm GL}_d(p^k){:} \langle\phi \rangle$ and we write the elements of $\Gamma{\rm L}_d(p^{k})$ as $(g,\phi^l)$ where $g\in{\rm GL}_d(p^k)$ and $\phi^l \in \langle \phi \rangle$. We refer to the map $\phi$ as a standard field automorphism of order $k$ in ${\rm GL}_d(p^k)$. In general, we say an element in $\Gamma{\rm L}_d(p^{k})\backslash {\rm GL}_d(p^k)$ is a field automorphism and note that they have the form $(g, \phi^l)$ such that $1\leqs l \leqs k-1$. Additionally, we recall that if $k$ divides $d$ then ${\rm GL}_{d/k}(p^k)$ embeds in ${\rm GL}_d(p)$. Therefore since the map $\gamma$ acts as an $\mathbb{F}_p$-linear map, $\Gamma{\rm L}_{d/k}(p^{k})$ embeds in ${\rm GL}_d(p)$. 

\begin{lem}\label{l:fieldjf}
Let $G=\Gamma{\rm L}_{d/k}(p^k)$ where $p$ is a prime and $k=pf$ for some $f\geqs 1$. Let $x=(1,\psi)\in G$ be an element of order $p$. Then $x$ has Jordan form $[J_p^{d/p}]$ on $V$. 
\end{lem}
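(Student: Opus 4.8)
The plan is to exhibit an explicit $\mathbb{F}_p$-basis of $V$ with respect to which $x$ is a permutation matrix, and then read off its Jordan form. First note that since the $\mathrm{GL}$-component of $x=(1,\psi)$ is trivial, $x$ has order $p$ if and only if $\psi\in\langle\phi\rangle$ has order $p$; as $\langle\phi\rangle$ is cyclic of order $k=pf$, we may write $\psi=\phi^s$ where $\gcd(s,k)=f$. Recall that $x$ acts on the natural module $V_\#=(\mathbb{F}_{p^k})^{d/k}$ by applying the field automorphism $\psi$ coordinatewise, with respect to the distinguished basis $\{v_1,\dots,v_{d/k}\}$ used to define $\phi$; this action is $\mathbb{F}_p$-linear, and it is exactly the image of $x$ under the embedding $\Gamma{\rm L}_{d/k}(p^k)\into {\rm GL}_d(p)$.

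Next I would apply Lemma~\ref{l:nbt} with $q=p$ (note $k=pf\geqs 2$, so a suitable $\lambda\in\mathbb{F}_{p^k}\setminus\mathbb{F}_p$ exists): there is an $\mathbb{F}_p$-basis
\[
\mathcal{B}=\{\lambda^{p^i}v_j:\ 0\leqs i\leqs k-1,\ 1\leqs j\leqs d/k\}
\]
of $V$. Since $\phi$ sends $\sum_j\mu_jv_j$ to $\sum_j\mu_j^pv_j$, it sends $\lambda^{p^i}v_j$ to $\lambda^{p^{i+1}}v_j$ with indices read modulo $k$, so $\phi$ permutes $\mathcal{B}$ as a product of $d/k$ disjoint $k$-cycles, one on each set $\{\lambda^{p^i}v_j:0\leqs i\leqs k-1\}$. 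Raising to the power $s$, each such $k$-cycle splits into $\gcd(s,k)=f$ disjoint cycles of length $k/\gcd(s,k)=k/f=p$; hence $\psi=\phi^s$ permutes $\mathcal{B}$ as a product of exactly $(d/k)f=d/p$ disjoint $p$-cycles. Thus, with respect to $\mathcal{B}$, the matrix of $x$ is a permutation matrix consisting of $d/p$ disjoint $p$-cycles.

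Finally I would invoke the standard fact that over a field of characteristic $p$ a single $p$-cycle permutation matrix $P$ has Jordan form $[J_p]$: indeed $P$ affords the regular representation of a cyclic group of order $p$, so it is conjugate to multiplication by $t$ on $\mathbb{F}_p[t]/(t^p-1)=\mathbb{F}_p[t]/((t-1)^p)$, which is uniserial of length $p$; equivalently, $(P-1)^p=P^p-1=0$ while $\dim\ker(P-1)=1$, forcing a single Jordan block of size $p$. Applying this to each of the $d/p$ cycles gives that $x$ has Jordan form $[J_p^{d/p}]$ on $V$, as required. (Alternatively, one can first realise $\psi$ over $\mathbb{F}_{p^f}$ as a single $p$-cycle via the normal basis theorem for $\mathbb{F}_{p^k}/\mathbb{F}_{p^f}$ and then descend to $\mathbb{F}_p$ using Lemma~\ref{l:jf1}; the computation is essentially the same.) The only step requiring genuine care is the bookkeeping of the cycle type of $\phi^s$ together with the characteristic-$p$ fact about $p$-cycles; the rest is routine.
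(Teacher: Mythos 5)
Your proposal is correct and follows essentially the same route as the paper: apply Lemma~\ref{l:nbt} to get the basis $\{\lambda^{p^i}v_j\}$, observe that $\psi$ permutes it as $d/p$ disjoint $p$-cycles, and conclude the Jordan form. The only difference is that you make explicit the final step (that a $p$-cycle permutation matrix in characteristic $p$ is a single block $J_p$), which the paper leaves implicit; this is a welcome addition but not a different argument.
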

\begin{proof}
We begin by noting that the standard field automorphism $\phi$ has order $k=pf$ and so $\psi=\phi^{if}$ for some $1\leqs i\leqs p-1$. 
Fix an $\mathbb{F}_{p^k}$-basis $\{v_1,\dots,v_{d/k}\}$ for $V_\#=(\mathbb{F}_{p^{k}})^{d/k}$, such that it is compatible with $\phi$ as in the discussion above. 
By Lemma \ref{l:nbt} there is an $\mathbb{F}_p$-basis for $V$,
$$\beta=\{\lambda v_1,\lambda^p v_1,\dots \lambda^{p^{k-1}}v_1,\dots,\lambda v_{d/k},\lambda^p v_{d/k},\dots \lambda^{p^{k-1}}v_{d/k}\}$$
such that $\lambda\in\mathbb{F}_{p^{k}}\setminus \mathbb{F}_p$. 
Note that $\psi$ acts on the vectors of $\beta$ as follows, $\psi(\lambda^{p^l}v_m)=\lambda^{p^{l+if}}v_m$ for all $0\leqs l\leqs k-1$ and $1\leqs m\leqs {d/k}$.
We can partition the set $\beta$ into $d/k$ sets of size $k$, namely the sets $\{\lambda v_1,\lambda^p v_1,\dots \lambda^{p^{k-1}}v_1\},\dots,\{\lambda v_{d/k},\lambda^p v_{d/k},\dots \lambda^{p^{k-1}}v_{d/k}\}$. 
On each of these $d/k$ sets $\psi$ acts as $f$ disjoint $p$-cycles. For example, on the first set an example of such a $p$-cycle is $(\lambda v_1,\lambda^{p^{if}}v_1,\dots,\lambda^{p^{(p-1)if}}v_1)$. Thus $\psi$ acts on $\beta$ as a product of $d/p$ disjoint $p$-cycles. It then follows that $(1,\psi)$ has Jordan form $[J_p^{d/p}]$ on $V$. 
\end{proof}

We are now in a position to state results regarding the Jordan form of field automorphisms of order $p$ in the general semilinear group $\Gamma{\rm L}_{d/k}(p^k)$. 
 First we state a well known number theoretic result. 
We remind the reader that for positive integers $a$ and $b$, we use the notation $(a,b)$ to denote the greatest common divisor of $a$ and $b$.

\begin{lem}\label{l:cancel}
Suppose $a,b,k,n,m\in\mathbb{Z}$ such that $k,m\neq 0$ and $(k,m)=d$. Then $ka\equiv kb \imod m$ if and only if $a\equiv b \imod {\frac{m}{d}}$. Additionally, the linear congruence $kx\equiv n\imod m$ has solutions if and only if $d$ divides $n$. Moreover if $d$ divides $n$ then there exist exactly $d$ solutions modulo $m$. 
\end{lem}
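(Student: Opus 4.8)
The plan is to prove Lemma~\ref{l:cancel} by reducing everything to the single statement that the map $x\mapsto kx$ on $\mathbb{Z}/m\mathbb{Z}$ has kernel of size $d=(k,m)$ and image of size $m/d$. First I would prove the cancellation part: if $ka\equiv kb \imod m$, write $k=dk'$, $m=dm'$ with $(k',m')=1$, so $m\mid k(a-b)$ gives $dm'\mid dk'(a-b)$, hence $m'\mid k'(a-b)$, and since $(k',m')=1$ this forces $m'\mid a-b$, i.e. $a\equiv b\imod{\tfrac{m}{d}}$. The converse is immediate: if $a\equiv b\imod{m'}$ then $a-b=m't$ for some $t$, so $k(a-b)=kt m' = dk't m' = k't m$, which is divisible by $m$. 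This step is entirely routine.

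Next I would handle solvability of $kx\equiv n\imod m$. If a solution $x$ exists, then $m\mid kx-n$, and since $d\mid k$ and $d\mid m$ we get $d\mid n$. Conversely, if $d\mid n$, write $n=dn'$, and since $(k',m')=1$ there is an inverse $\overline{k'}$ of $k'$ modulo $m'$; then $x_0:=\overline{k'}n'$ satisfies $k'x_0\equiv n'\imod{m'}$, and multiplying through by $d$ gives $kx_0 = dk'x_0 \equiv dn' = n \imod{dm' = m}$ — actually one should be slightly careful here: $k'x_0\equiv n'\imod{m'}$ means $k'x_0 - n' = m' t$, so $kx_0 - n = d(k'x_0-n') = dm' t = mt$, giving $kx_0\equiv n\imod m$ as needed. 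For the count: if $x_0,x_1$ are two solutions then $kx_0\equiv kx_1\imod m$, so by the cancellation part just proved $x_0\equiv x_1\imod{m'}$; conversely any $x\equiv x_0\imod{m'}$ is also a solution (again by the cancellation part, reading it in reverse). Hence the solution set modulo $m$ is exactly the residue class $x_0 + m'\mathbb{Z}$ reduced mod $m$, which consists of the $d$ values $x_0, x_0+m', x_0+2m',\dots, x_0+(d-1)m'$, all distinct modulo $m=dm'$. So there are exactly $d$ solutions modulo $m$.

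I do not expect any genuine obstacle here — this is a classical fact about linear congruences (essentially the structure of $\mathbb{Z}/m\mathbb{Z}$ as a cyclic group and Bézout's identity), and the only thing to be careful about is bookkeeping the reductions $k=dk'$, $m=dm'$ and tracking which modulus ($m$ versus $m/d$) each congruence lives over. Alternatively, one could cite a standard number theory text (e.g. the theory of linear Diophantine equations / linear congruences) and merely sketch the argument, since the paper uses this only as a ``well known number theoretic result''; but the self-contained two-paragraph proof above is short enough to include in full.
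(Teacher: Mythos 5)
Your proof is correct. Note that the paper itself gives no argument for this lemma at all — it is stated as a ``well known number theoretic result'' with no proof attached — so there is no authorial proof to compare against; your self-contained argument (reduce to $k=dk'$, $m=dm'$ with $(k',m')=1$, cancel $d$, invert $k'$ modulo $m'$, and count the $d$ translates $x_0+tm'$ modulo $m$) is exactly the standard textbook treatment of linear congruences and fills the omission correctly.
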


\begin{lem}\label{l:gamma1}
Let $d$ be a positive integer and let $p$ be a prime divisor of $d$. Let $x\in\Gamma{\rm L}_1(p^{d})$ be a field automorphism of order $p$.
Then $x$ has Jordan form $[J_p^{d/p}]$ on $V=(\mathbb{F}_p)^d$.
\end{lem}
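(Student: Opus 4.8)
The statement is essentially the special case $d/k = 1$, $k = d$ of the combination of Lemmas \ref{l:fieldjf} and \ref{l:jf1}, but since $\Gamma{\rm L}_1(p^d)$ is so concrete I would give a direct argument. The idea is to identify $V = (\mathbb{F}_p)^d$ with the field $\mathbb{F}_{p^d}$, viewed as a $d$-dimensional $\mathbb{F}_p$-vector space, and to observe that a field automorphism of order $p$ in $\Gamma{\rm L}_1(p^d)$ acts on $V$ as $\phi^{d/p \cdot j}$ for some $j$ with $1 \le j \le p-1$, where $\phi$ is the standard Frobenius $\lambda \mapsto \lambda^p$ of order $d$ (the group of field automorphisms is cyclic of order $d$, so its unique subgroup of order $p$ is generated by $\phi^{d/p}$, and the order-$p$ elements are its nontrivial powers). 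Write $\psi = \phi^{d/p \cdot j}$; this is again an $\mathbb{F}_p$-linear map of order $p$ on $V$, so its Jordan form is $[J_p^{a_p}, \dots, J_1^{a_1}]$ with $\sum i a_i = d$, and it suffices to show $a_1 = \dots = a_{p-1} = 0$, equivalently that $\ker(\psi - 1)$ has dimension exactly $d/p$.

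**Key steps.** First, $\ker(\psi - 1)$ is precisely the fixed field of $\psi$ acting on $\mathbb{F}_{p^d}$. Since $\psi$ generates the subgroup of order $p$ in the Galois group $\mathrm{Gal}(\mathbb{F}_{p^d}/\mathbb{F}_p)$, its fixed field is $\mathbb{F}_{p^{d/p}}$, which has dimension $d/p$ over $\mathbb{F}_p$. Hence $\dim \ker(\psi - 1) = d/p$, so $\sum_{i \ge 1} a_i = d/p$. Combined with $\sum_i i a_i = d = p \cdot (d/p)$, a trivial inequality argument forces $a_i = 0$ for $i < p$ and $a_p = d/p$: indeed $d = \sum i a_i \le p \sum a_i = p \cdot d/p = d$ with equality, and equality in $i a_i \le p a_i$ requires $a_i = 0$ whenever $i < p$. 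Alternatively, and perhaps cleaner, I could invoke Lemma \ref{l:nbt}: by the normal basis theorem pick $\lambda \in \mathbb{F}_{p^d}$ so that $\{\lambda, \lambda^p, \dots, \lambda^{p^{d-1}}\}$ is an $\mathbb{F}_p$-basis of $V$; then $\psi$ permutes this basis cyclically in $d/p$ orbits of length $p$ (each orbit being $(\lambda^{p^l}, \lambda^{p^{l + d j/p}}, \dots)$), and a single $p$-cycle permutation matrix over $\mathbb{F}_p$ has Jordan form $J_p$ since its minimal polynomial is $t^p - 1 = (t-1)^p$. So $\psi$ has Jordan form $[J_p^{d/p}]$, as claimed. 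This second route directly parallels the proof of Lemma \ref{l:fieldjf}.

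**Main obstacle.** There is no serious obstacle; the only point requiring mild care is the bookkeeping that shows the permutation of the normal basis really does decompose into exactly $d/p$ cycles of length $p$ — this uses that $\psi = \phi^{dj/p}$ with $\gcd(j,p) = 1$ has order exactly $p$ on the index set $\mathbb{Z}/d\mathbb{Z}$, so each orbit has size $p$ and there are $d/p$ of them, which is a one-line application of Lemma \ref{l:cancel}. I would present the argument via the normal basis theorem (Lemma \ref{l:nbt}) to keep it uniform with the preceding lemmas, remarking that it is the $d/k = 1$ case of Lemma \ref{l:fieldjf}.
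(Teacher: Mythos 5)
There is a genuine gap, and it comes from how you read the hypothesis. In this paper a \emph{field automorphism} of $\Gamma{\rm L}_1(p^d)={\rm GL}_1(p^d){:}\langle\phi\rangle$ is \emph{any} element of $\Gamma{\rm L}_1(p^d)\setminus{\rm GL}_1(p^d)$, i.e.\ an element of the form $x=(a,\psi)$ with $a\in{\rm GL}_1(p^d)$ arbitrary and $\psi=\phi^{j}$ nontrivial; it is not assumed that $a=1$. Since $p\nmid p^d-1$, \emph{every} element of order $p$ in $\Gamma{\rm L}_1(p^d)$ is of this form, and the order-$p$ condition only forces $\psi$ to have order $p$ together with the norm condition $a\psi(a)\cdots\psi^{p-1}(a)=1$; it does not force $a=1$. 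Your opening observation that such an $x$ "acts on $V$ as $\phi^{dj/p}$" is therefore false in general: $x$ acts as the semilinear map $v\mapsto a\,\psi(v)$, and there are $(p-1)\bigl(1+p^{k}+\cdots+p^{(p-1)k}\bigr)$ order-$p$ elements (with $k=d/p$), not just the $p-1$ pure Frobenius powers. Everything after that point — the fixed-field computation, or the normal-basis cycle count — only treats the elements $(1,\psi)$, which is exactly the content of Lemma \ref{l:fieldjf} (in the case $d/k=1$); the paper simply cites that lemma for this step. The actual content of Lemma \ref{l:gamma1}, and the bulk of the paper's proof, is the reduction of a general $(a,\psi)$ of order $p$ to $(1,\psi)$: the paper shows by a class-size count that $|(1,\psi)^{H}|$ equals the number of order-$p$ elements $(b,\psi)$, so all of them are $H$-conjugate to $(1,\psi)$ and hence share its Jordan form. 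Your proposal omits this step entirely, and without it the lemma would not suffice for its application in Proposition \ref{p:CaseI}, where one must control \emph{all} order-$p$ elements of $H\leqs\Gamma{\rm L}_1(p^d)$.

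If you want to keep the spirit of your argument, it can be repaired without the conjugacy count: for $x=(a,\psi)$ of order $p$ one can compute $\ker(x-1)$ directly. Nonzero fixed vectors are the solutions of $v^{p^{jk}-1}=a^{-1}$ in $\mathbb{F}_{p^d}^{\times}$; the norm condition coming from $x^p=1$ guarantees (e.g.\ via Hilbert's Theorem 90, or by noting $a^{(p^d-1)/(p^k-1)}=1$) that solutions exist, and there are exactly $(p^{jk}-1,p^d-1)=p^{k}-1$ of them, so $\dim\ker(x-1)=k=d/p$. Your inequality argument ($\sum_i a_i=d/p$, $\sum_i ia_i=d$, $i\leqs p$) then correctly forces the Jordan form $[J_p^{d/p}]$. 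But as written, the proposal proves only the $a=1$ case and misses the step that the lemma is actually there to supply.
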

\begin{proof}
Recall $H=\Gamma{\rm L}_1(p^d)={\rm GL}_1(p^d){:}\langle \phi\rangle$, where $\phi$ is the standard field automorphism of order $d$. We write $x=(a,\psi)$ where $a\in {\rm GL}_1(p^d)$ and $\psi= \phi^j$ for some integer $1\leqs j< d$. Additionally we write $d=pk$ for some $k\geqs 1$. Since $x$ has order $p$, this implies that $\psi$ has order $p$ and $a\psi(a)\dots\psi^{p-1}(a)=1$. In particular, $j=ik$ with $1\leqs i\leqs p-1$.

We claim that $x$ is $H$-conjugate to $(1,\psi)$ and so the result follows from Lemma \ref{l:fieldjf}. In order to prove this claim we first note that an element $(b,\phi^t)\in H$ is $H$-conjugate to $(1,\psi)$ only if $\phi^t=\psi$. Thus we proceed by showing that $|(1,\psi)^H|$ is equal to the number of order $p$ elements in $H$ of the form $(b,\psi)$.

 We recall that $(b,\psi)\in H$ has order $p$ if and only if 
$$b\psi(b)\dots\psi^{p-1}(b)=b\phi^{ik}(b)\dots\phi^{(p-1)ik}(b)=1.$$ 
Using Lemma \ref{l:cancel} we can show that for each $1\leqs z\leqs p-1$ there exists a unique $1\leqs y\leqs p-1$ such that $zik\equiv yk\imod d$. Thus since $\phi$ has order $d$ the equation above is exactly equivalent to  
$$b\phi^k(b)\dots\phi^{(p-1)k}(b)=b^{1+p^k+\dots+p^{(p-1)k}}=1.$$ Since ${\rm GL}_1(p^d)$ is a cyclic group of order $p^d-1$ there are 
$$(p^d-1,1+p^k+\dots+p^{(p-1)k})=1+p^k+\dots+p^{(p-1)k}$$ 
many elements $b\in{\rm GL}_1(p^d)$ such that $(b,\psi)$ has order $p$. 

An element $(b,\phi^t)\in C_H((1,\psi))$ if and only if $b=\psi(b)$ which is equivalent to $b^{p^{ik}-1}=1$. There are exactly $(p^d-1,p^{ik}-1)=p^k-1$ such elements $b\in {\rm GL}_1(p^d)$. Thus $|C_H((1,\psi))|=(p^k-1)d$, so 
$$|(1,\psi)^H|=(p^d-1)d/(p^k-1)d=1+p^k+\dots+p^{(p-1)k},$$
 and the claim follows. 
The result now follows by applying Lemma \ref{l:fieldjf}.
\end{proof}

\begin{lem}\label{l:gaml2}
Let $p,k$ and $d$ be integers such that $p$ is a prime, $d$ is divisible by $k$ with $d/k\geqs 2$ and $k$ is divisible by $p$. Let $x\in\Gamma{\rm L}_{d/k}(p^{k})$ be a field automorphism of order $p$. Then $x$ has Jordan form $[J_p^{d/p}]$ on $V=(\mathbb{F}_p)^d$.
\end{lem}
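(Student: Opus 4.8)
The plan is to reduce the general case to the special case $g=\mathrm{id}$, which is precisely Lemma~\ref{l:fieldjf}. Concretely, I will show that every order-$p$ field automorphism $x=(g,\psi)\in\Gamma{\rm L}_{d/k}(p^k)$ is conjugate, inside ${\rm GL}_{d/k}(p^k)$, to the ``straight'' field automorphism $(1,\psi)$. Since ${\rm GL}_{d/k}(p^k)$ embeds in ${\rm GL}_d(p)$ (as in the discussion preceding Lemma~\ref{l:fieldjf}) and conjugate elements of ${\rm GL}_d(p)$ have the same Jordan form on $V=(\mathbb{F}_p)^d$, the assertion for $x$ will follow from the assertion for $(1,\psi)$, which is Lemma~\ref{l:fieldjf}.

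In detail, I would write $x=(g,\psi)$ with $g\in{\rm GL}_{d/k}(p^k)$ and, since $|x|=p$ and $x\notin{\rm GL}_{d/k}(p^k)$, the field-automorphism part satisfies $\psi=\phi^{ik/p}$ for some $1\leqs i\leqs p-1$, where $\phi$ is the standard field automorphism of order $k$ (this uses $p\mid k$). Expanding $(g,\psi)^p=\bigl(g\,\psi(g)\cdots\psi^{p-1}(g),\,\psi^p\bigr)$ and using $|x|=p$ yields the norm condition $N_\psi(g):=g\,\psi(g)\cdots\psi^{p-1}(g)=1$. Next one checks that $\psi$ acts on the scalar field $\mathbb{F}_{p^k}$ as an automorphism of order $p$ with fixed field $\mathbb{F}_{p^{k/p}}$, so that $\langle\psi\rangle$ is the Galois group of the cyclic degree-$p$ extension $\mathbb{F}_{p^k}/\mathbb{F}_{p^{k/p}}$. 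The key input is then the generalised form of Hilbert's Theorem~90 for ${\rm GL}_n$ (Speiser's theorem; equivalently a rational consequence of Lang's theorem) applied to this cyclic extension: the norm-$1$ element $g$ is a twisted coboundary, i.e.\ there exists $h\in{\rm GL}_{d/k}(p^k)$ with $g=h^{-1}\psi(h)$. A short computation then gives $(h,1)(g,\psi)(h,1)^{-1}=\bigl(h g\,\psi(h)^{-1},\psi\bigr)=(1,\psi)$, so $x$ is ${\rm GL}_{d/k}(p^k)$-conjugate to $(1,\psi)$; and $(1,\psi)$ has Jordan form $[J_p^{d/p}]$ on $V$ by Lemma~\ref{l:fieldjf}, whose hypotheses are satisfied here since $k=p\cdot(k/p)$ with $k/p\geqs 1$.

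The main obstacle is the application of Hilbert~90 / Lang's theorem, and in particular producing the conjugating element $h$ \emph{rationally}, that is, in ${\rm GL}_{d/k}(p^k)$ rather than merely over $\bar{\mathbb{F}}_p$; this requires identifying the correct extension $\mathbb{F}_{p^k}/\mathbb{F}_{p^{k/p}}$ and its Galois group $\langle\psi\rangle$ precisely, and pairing the norm map $N_\psi$ with it. Everything else is routine book-keeping: verifying that $\psi$ restricts to a generator of the Galois group, extracting $N_\psi(g)=1$ from $|x|=p$, and the final conjugacy identity are all one-line computations. (I note that this argument in fact also covers the case $d/k=1$, i.e.\ Lemma~\ref{l:gamma1}, which is instead proved there by an explicit count.)
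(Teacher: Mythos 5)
Your proof is correct, and it follows the same overall reduction as the paper --- show that $x$ is conjugate in $\Gamma{\rm L}_{d/k}(p^k)$ to the straight field automorphism $(1,\psi)$ and then quote Lemma~\ref{l:fieldjf} --- but the key input used to obtain that conjugacy is genuinely different. The paper invokes Shintani descent (citing \cite[Lemma 3.20]{BGH}): the ${\rm GL}_{d/k}(p^{k}){:}\langle\sigma\rangle$-classes in the coset ${\rm GL}_{d/k}(p^{k})\sigma$ correspond to conjugacy classes in ${\rm GL}_{d/k}(p^{k/p})$, with order-$p$ elements of the coset corresponding to the identity, so there is a unique such class, necessarily that of $(1,\sigma)$. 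You instead extract the norm condition $g\,\psi(g)\cdots\psi^{p-1}(g)=1$ from $|x|=p$ and trivialise the resulting $1$-cocycle by noncommutative Hilbert 90 (Speiser's theorem) for the cyclic extension $\mathbb{F}_{p^k}/\mathbb{F}_{p^{k/p}}$ with Galois group $\langle\psi\rangle$, getting $g=h^{-1}\psi(h)$ with $h\in{\rm GL}_{d/k}(p^k)$ and hence $(h,1)x(h,1)^{-1}=(1,\psi)$; your identification of $\psi$ as $\phi^{ik/p}$ with $(i,p)=1$ and of its fixed field as $\mathbb{F}_{p^{k/p}}$ is exactly what makes the vanishing of $H^1({\rm Gal}(L/F),{\rm GL}_n(L))$ apply rationally, and that vanishing holds for any Galois extension, so there is no gap. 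The two tools are close cousins (Shintani descent is itself established via Lang--Steinberg, of which Speiser's theorem over finite fields is the rational shadow), but your route is somewhat more self-contained and, as you observe, it also subsumes Lemma~\ref{l:gamma1}, which the paper proves separately by an explicit class-size count that amounts to a hand-made Hilbert 90 for ${\rm GL}_1$. What the paper's argument buys is the full class correspondence, a reusable statement; what yours buys is a direct and elementary construction of the conjugating element.
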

\begin{proof}
Let $\phi$ denote the standard field automorphism of order $p$ in $\Gamma{\rm L}_{d/k}(p^{k})$. We note that all field automorphisms of order $p$ are contained a coset ${\rm GL}_{d/k}(p^{k})\phi^i$ for some $1\leqs i\leqs p-1$. Thus $x\in{\rm GL}_{d/k}(p^{k})\sigma$, where $\sigma=\phi^i$ for some $1\leqs i\leqs p-1$.  By the theory of Shintani descent (see \cite[Section 3.4]{BGH} for more details for example) there is a bijective correspondence between the set of ${\rm GL}_{d/k}(p^{k}){:}\langle \sigma \rangle $-classes in the coset ${\rm GL}_{d/k}(p^{k})\sigma$ and the set of conjugacy classes in ${\rm GL}_{d/k}(p^{k/p})$. In particular, the classes of elements of order $p$ in ${\rm GL}_{d/k}(p^{k})\sigma$ correspond to classes of elements of order 1 in ${\rm GL}_{d/k}(p^{k/p})$ (see \cite[Lemma 3.20]{BGH} for a proof of this). We conclude there is a unique ${\rm GL}_{d/k}(p^{k}){:}\langle \sigma \rangle $-class of elements of order $p$ in $\Gamma{\rm L}_{d/k}(p^{k})$. Therefore we may assume that $x=(1,\sigma)=(1,\phi^i)$, so by Lemma \ref{l:fieldjf} the result follows.  
\end{proof}

\subsection{Proof of Theorem \ref{t:mainaffine}}\label{ss:ProofOfAffine}

We now turn to the proof of Theorem \ref{t:mainaffine}. We remind the reader of the notation we will use throughout this section: $G=V{:}H\leqs {\rm AGL}(V)$ with $V=(\mathbb{F}_p)^d$ and $H$ is an irreducible subgroup of ${\rm GL}(V)$. Additionally, we recall that we may also assume that $G$ is 2-transitive (see Lemma \ref{l:2transred}). Thus we approach the proof by inspecting the cases in Hering's Theorem (see Theorem \ref{t:Hering} and Table \ref{tab:2trans}). 

\begin{prop}\label{p:smalldims}
Theorem \ref{t:mainaffine} holds for $G$ as in case V-IX of Table \ref{tab:2trans}. 
\end{prop}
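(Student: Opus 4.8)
The plan is to dispose of cases V--IX of Hering's Theorem computationally in {\sc Magma}, mirroring the argument used in Proposition~\ref{p:smalldim} for the small exceptional groups. The key point is that each of these cases involves only finitely many groups $G = V{:}H$ of small degree $n = p^d$: explicitly $n \in \{2^4, 3^4, 3^6, 5^2, 7^2, 11^2, 19^2, 23^2, 29^2, 59^2\}$, so there is no infinite family to handle by theory. For each such $n$, I would enumerate the relevant 2-transitive affine groups $G$ of degree $n$ from the Database of Primitive Groups in {\sc Magma} (using \verb|PrimitiveGroups| and filtering for affine type with $H$ transitive on $V^*$, equivalently $G$ 2-transitive), making sure the list of candidate $H$ matches the groups permitted in rows V--IX of Table~\ref{tab:2trans}.

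Next, for each candidate pair $(G,H)$ I would apply the almost-elusivity test: by Corollary~\ref{c:ae}, since in these cases $p \mid |H|$ is possible (for instance $\mathrm{SL}_2(3)$, $\mathrm{SL}_2(5)$, $2^{1+4}$, $A_6$, $A_7$, $\mathrm{SL}_2(13)$ all have order divisible by small primes), I need to check whether either $p \nmid |H|$, or else $p \mid d$ and every element of $H$ of order $p$ has Jordan form $[J_p^{d/p}]$ on $V$. This is a finite check that can be carried out directly: compute the conjugacy classes of $H$, pick out those of order $p$, and for each representative compute its Jordan/rational canonical form on $V$ (via the \verb|JordanForm| or \verb|RationalForm| command applied to the matrix). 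Alternatively, and perhaps more robustly, one can simply implement the derangement-counting routine exactly as in Proposition~\ref{p:smalldim}: compute the classes of prime-order elements in $G$, determine which are derangements by testing $x^G \cap H = \emptyset$ via \verb|IsConjugate|, and declare $(G,H)$ almost elusive precisely when there is a unique such class. Both approaches should agree, and running the second as a cross-check against Corollary~\ref{c:ae} gives confidence in the output.

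The output of this computation is exactly the list recorded in Table~\ref{tab:affine}: the surviving pairs in rows V--IX are the groups $\mathcal{P}(n,i)$ with $(n,i)$ as tabulated (for example $2^4 : A_6 = \mathcal{P}(2^4,17)$, and the entries for $3^4, 3^6, 5^2, 7^2, 11^2, 19^2, 23^2, 29^2, 59^2$), while all other pairs fail the Jordan form condition and hence contain at least two classes of prime-order derangements. I would also note that the groups in rows I--IV of Table~\ref{tab:2trans} of sufficiently small degree (e.g.\ $\Gamma\mathrm{L}_1(p^d)$ with small $p^d$, or $\mathrm{SL}_a(q) \trianglelefteqslant H$ with small $q^a$) may reappear in the {\sc Magma} database under the same degrees, so care is needed to attribute each surviving $\mathcal{P}(n,i)$ to the correct Hering case; but for the purposes of this proposition it suffices that every almost elusive $G$ of degree $n \in \{2^4,3^4,3^6,5^2,7^2,11^2,19^2,23^2,29^2,59^2\}$ arising in cases V--IX appears in Table~\ref{tab:affine}, which the computation verifies.

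The main obstacle is not mathematical depth but bookkeeping and correctness of the computation: ensuring that the database enumeration is exhaustive over the groups permitted by Hering's Theorem in these cases (rather than, say, missing an intermediate group $H$ between $\mathrm{SL}_2(5)$ and its normaliser in $\Gamma\mathrm{L}$), correctly identifying the group-theoretic structure of each $H$ so that one can be sure it belongs to row V--IX and not to an earlier row, and reconciling the two almost-elusivity criteria (direct derangement count versus the Jordan form test of Corollary~\ref{c:ae}). Once the enumeration is pinned down, the per-group check is entirely routine, and the proof reduces to: "By the discussion above the relevant pairs $(G,H)$ are finite in number; a direct computation in {\sc Magma} \cite{Mag}, using Corollary~\ref{c:ae} (or equivalently the derangement-counting method of Proposition~\ref{p:smalldim}), shows that $G$ is almost elusive precisely for the pairs recorded in Table~\ref{tab:affine}."
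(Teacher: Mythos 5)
Your proposal is correct and matches the paper's approach: the paper likewise disposes of cases V--IX by a finite computation over the relevant degrees using the Database of Primitive Groups in {\sc Magma} (all degrees here being at most $59^2 < 4095$), checking for a unique class of fixed-point-free prime order elements. The only difference is implementational — the paper uses the \verb|Fix| command directly on class representatives rather than the \verb|IsConjugate| fusion test or the Jordan form criterion of Corollary~\ref{c:ae} — which does not affect the substance of the argument.
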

\begin{proof}
This is a simple calculation using the Database of Primitive Groups in {\sc Magma} \cite{Mag}, which records the primitive groups up to degree 4095. The command \verb|PrimitiveGroups| outputs the groups with our desired degree. We can then use the \verb|Classes| command to obtain all the conjugacy class in $G$ of elements of prime order. Using the \verb|Fix| command, which outputs the set of fixed points of an element of our group, for each $G$-class we can find the number of fixed points of the elements. If there is a unique $G$-class of elements of prime order with no fixed points then we conclude the group is almost elusive. 
\end{proof}

It now remains to handle the infinite families in Hering's theorem, namely cases I-IV in Table \ref{tab:2trans}. We recall that the non-zero vectors in $V$ form a single class of derangements of order $p$. Thus $G$ is almost elusive if there exist no derangements of the form $(v,h)\in G$, where both $v$ and $h$ are nontrivial.

\begin{prop}\label{p:CaseI}
Assume $G$ is a 2-transitive group as in case I of Table \ref{tab:2trans}. Then $G$ is almost elusive.
\end{prop}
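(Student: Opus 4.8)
The plan is to show that in Case I of Hering's Theorem there are no derangements of the form $(v,h) \in G$ with both $v$ and $h$ nontrivial, which by Corollary \ref{c:ae} (and the fact that the nonzero vectors of $V$ form a single class of order-$p$ derangements) will establish that $G$ is almost elusive. By Corollary \ref{c:ae}, it suffices to show that either $p$ does not divide $|H|$, or else $p$ divides both $|H|$ and $d$ and every $h \in H$ of order $p$ has Jordan form $[J_p^{d/p}]$ on $V$. So the whole proposition reduces to understanding the order-$p$ elements of $H \leqslant \Gamma{\rm L}_1(p^d)$.

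First I would recall the structure $\Gamma{\rm L}_1(p^d) = {\rm GL}_1(p^d){:}\langle\phi\rangle$ with $\phi$ the standard field automorphism of order $d$, and note that ${\rm GL}_1(p^d)$ is cyclic of order $p^d - 1$, hence has no element of order $p$. Therefore any $h \in H$ of order $p$ must lie outside ${\rm GL}_1(p^d)$, i.e. it is a field automorphism $h = (a, \phi^j)$ with $\phi^j$ of order $p$; in particular $p \mid d$ (so the hypothesis of Corollary \ref{c:ae}(ii) is automatic once such an $h$ exists), and $h$ generates, together with scalars, a subgroup of $\Gamma{\rm L}_1(p^d)$ whose restriction to the relevant structure is a field automorphism of order $p$ in the sense of Lemma \ref{l:gamma1}. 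The key point is then simply Lemma \ref{l:gamma1}: any field automorphism of order $p$ in $\Gamma{\rm L}_1(p^d)$ has Jordan form $[J_p^{d/p}]$ on $V = (\mathbb{F}_p)^d$. Since $H \leqslant \Gamma{\rm L}_1(p^d)$, every order-$p$ element of $H$ is such a field automorphism and hence inherits this Jordan form.

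Putting this together: if $p \nmid |H|$ we are in case (i) of Corollary \ref{c:ae} and $G$ is almost elusive. If $p \mid |H|$, then since ${\rm GL}_1(p^d)$ has order prime to $p$, any order-$p$ element of $H$ is a field automorphism of order $p$, which forces $p \mid d$, and by Lemma \ref{l:gamma1} it has Jordan form $[J_p^{d/p}]$ on $V$; thus every such element has this Jordan form, so we are in case (ii) of Corollary \ref{c:ae} and $G$ is again almost elusive. Either way, $G$ is almost elusive, completing the proof.

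The only slightly delicate point — and hence the "main obstacle", though it is really just bookkeeping — is to justify that an order-$p$ element $h = (a,\phi^j) \in H \leqslant \Gamma{\rm L}_1(p^d)$ is genuinely covered by Lemma \ref{l:gamma1}, i.e. that it is a "field automorphism of order $p$" in the required sense; this follows because the order-$p$ condition on $h$ forces $\phi^j$ to have order $p$ (as the ${\rm GL}_1(p^d)$-part has $p'$-order), and Lemma \ref{l:gamma1} is stated for exactly such elements. No Jordan-form computation is needed here beyond what Lemma \ref{l:gamma1} already provides.
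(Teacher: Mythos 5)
Your proposal is correct and follows essentially the same route as the paper: observe that ${\rm GL}_1(p^d)$ has order $p^d-1$ coprime to $p$, so either $p\nmid |H|$ (case (i) of Corollary \ref{c:ae}) or every order-$p$ element of $H$ is a field automorphism of order $p$, forcing $p\mid d$ and, by Lemma \ref{l:gamma1}, Jordan form $[J_p^{d/p}]$ on $V$, so case (ii) of Corollary \ref{c:ae} applies. The only cosmetic difference is that the paper splits on whether $p\mid d$ rather than on whether $p\mid |H|$, which amounts to the same thing.
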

\begin{proof}
Here $n=p^d$ and $H\leqs\Gamma{\rm L}_1(p^d)={\rm GL}_1(p^d){:}d\leqs{\rm GL}_d(p)$. If $p$ does not divide $d$, then $p$ does not divide $|H|$ and thus $G$ is almost elusive by Lemma \ref{l:dercond}. Now assume that $p$ divides $d$. Any element of order $p$ in $\Gamma{\rm L}_1(p^d)$ must be a field automorphism, so the result follows by Lemma \ref{l:gamma1} and Corollary \ref{c:ae}.
\end{proof}

\begin{prop}\label{p:CaseII}
Assume $G$ is as in case II of Table \ref{tab:2trans}. Then $G$ is almost elusive if and only if $p=a=2$.
\end{prop}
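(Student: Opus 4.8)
The plan is to reduce everything to Corollary~\ref{c:ae} together with the Jordan form lemmas of the preliminary section. Since here $V=(\mathbb{F}_q)^a$ with $q=p^f$, we have $d=\dim_{\mathbb{F}_p}V=af$, and because $a\geqs 2$ the subgroup ${\rm SL}_a(q)\trianglelefteqslant H$ contains transvections, so $p$ divides $|H|$. Hence by Corollary~\ref{c:ae}, $G$ is almost elusive if and only if $p\mid d$ and every element of order $p$ in $H$ has Jordan form $[J_p^{d/p}]$ on $V$. I will prove necessity of $p=a=2$ by exhibiting a single offending element whenever $(p,a)\neq(2,2)$, and then prove sufficiency by classifying the order-$p$ elements of $\Gamma{\rm L}_2(q)$ for $q$ even.

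For necessity, fix a transvection $t\in{\rm SL}_a(q)\leqs H$. On the natural module $(\mathbb{F}_q)^a$ it has Jordan form $[J_2,J_1^{a-2}]$, so by Lemma~\ref{l:jf1} (applied with $k=f$) it has Jordan form $[J_2^f,J_1^{(a-2)f}]$ on $V$. If $G$ is almost elusive this must equal $[J_p^{d/p}]$, all of whose blocks have size $p$; comparing the largest block size forces $p=2$, and then the absence of trivial blocks in $[J_2^{d/2}]$ forces $(a-2)f=0$, i.e. $a=2$ since $f\geqs 1$. This yields the forward implication.

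For sufficiency, assume $p=a=2$, so $q=2^f$, $d=2f$ and ${\rm SL}_2(q)\trianglelefteqslant H\leqs\Gamma{\rm L}_2(q)$. It is enough to show that every element of order $2$ in the ambient group $\Gamma{\rm L}_2(q)$ has Jordan form $[J_2^f]=[J_p^{d/p}]$ on $V$, since the order-$2$ elements of $H$ form a subset of these and $p\mid d$ automatically. An order-$2$ element lying in ${\rm GL}_2(q)$ is a transvection --- the unique class of involutions in ${\rm GL}_2(q)$ for $q$ even --- and hence has Jordan form $[J_2^f]$ on $V$ by Lemma~\ref{l:jf1}. An order-$2$ element outside ${\rm GL}_2(q)$ is a field automorphism $(g,\phi^l)$ with $1\leqs l\leqs f-1$; such an element of order $2$ exists only when $f$ is even, and in that case Lemma~\ref{l:gaml2}, applied with $k=f$ (divisible by $p=2$) and $d/k=a=2\geqs 2$, shows it has Jordan form $[J_2^{d/2}]=[J_2^f]$ on $V$. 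Corollary~\ref{c:ae} then gives that $G$ is almost elusive, completing the proof.

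The only genuinely delicate point is the sufficiency direction, where the semilinear (field automorphism) involutions must be handled uniformly; this is exactly the content of Lemma~\ref{l:gaml2}, so once that lemma is in hand the argument is essentially bookkeeping. I would also check the edge case $f=1$ (where $\Gamma{\rm L}_2(q)={\rm GL}_2(q)$, so no field automorphisms occur) to confirm it is subsumed by the same reasoning.
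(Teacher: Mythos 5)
Your proof is correct and takes essentially the same route as the paper: both arguments reduce to Corollary \ref{c:ae}, use Lemma \ref{l:jf1} to pass from the Jordan form of a transvection (resp.\ an order-$p$ element of ${\rm GL}_2(q)$) on $(\mathbb{F}_q)^a$ to its form on $V=(\mathbb{F}_p)^d$, and invoke Lemma \ref{l:gaml2} to handle the semilinear involutions when $p=a=2$. The only differences are cosmetic: you treat necessity uniformly via a single transvection where the paper splits into the cases $a\geqs 3$ and $a=2$ with $p\geqs 3$, and you make explicit the (correct) point that order-$2$ field automorphisms occur only when $f$ is even.
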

\begin{proof}
Here ${\rm SL}_a(q)\trianglelefteqslant H\leqs \Gamma{\rm L}_a(q)$ and $n=p^d=q^a$ with $a\geqs 2$. We recall that $\Gamma{\rm L}_a(q)=\Gamma{\rm L}_{d/k}(p^{k})<{\rm GL}_d(p)$, where $k=d/a$. Define $V_\# = (\mathbb{F}_{q})^a$, an $a$-dimensional vector space over $\mathbb{F}_q$ (the natural module of ${\rm GL}_a(q)$). 
Assume first that $a\geqs 3$ and take $h\in {\rm SL}_a(q)\leqs H$ to be an element of order $p$ with Jordan form $[J_2,J_1^{a-2}]$ on $V_\#$. Then by Lemma \ref{l:jf1}, $h$ has Jordan form $[J_2^k,J_1^{k(a-2)}]$ on $V$. Thus Corollary \ref{c:ae} implies $G$ is not almost elusive. 
Finally assume that $a=2$. Take $h\in{\rm GL}_2(q)$ to be an element of order $p$. Then $h$ has Jordan form $[J_2]$ on $V_{\#}$ and $[J_2^{d/2}]$ on $V$. Suppose first $p\geqs 3$. Then $G$ is not almost elusive by Corollary \ref{c:ae}. Finally suppose $p=2$. Then using Lemma \ref{l:gaml2} we see that every element of order $2$ in $\Gamma{\rm L}_2(q)$ has Jordan form $[J_2^{d/2}]$ on $V$. Thus the result follows by Corollary \ref{c:ae}.
\end{proof}

\begin{prop}\label{p:CaseIII}
Assume $G$ is as in case III of Table \ref{tab:2trans}. Then $G$ is not almost elusive.
\end{prop}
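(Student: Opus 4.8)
The plan is to exhibit an element of order $p$ in $H$ whose Jordan form on $V$ does not satisfy the condition in Corollary \ref{c:ae}, thereby producing a prime-order derangement with nontrivial linear part (a second class of prime-order derangements besides the nonzero vectors of $V$).

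First I would fix the set-up. In case III we have the natural module $V_\# = (\mathbb{F}_q)^a$ for ${\rm Sp}_a(q) \trianglelefteqslant H \leqslant \Gamma{\rm L}_a(q)$, with $a \geqslant 4$ even and $q = p^f$; here $d = af$ and, in the notation of Section \ref{ss:afineprelims}, $\Gamma{\rm L}_a(q) = \Gamma{\rm L}_{d/k}(p^k)$ with $k = f$. Since $|{\rm Sp}_a(q)|$ is divisible by $p$, so is $|H|$, and hence condition (i) of Corollary \ref{c:ae} fails; it remains to show that condition (ii) fails as well. For this I would take $h \in {\rm Sp}_a(q) \leqslant H$ to be a symplectic transvection (equivalently, a regular unipotent element of an ${\rm Sp}_2(q)$ factor arising from an orthogonal decomposition $V_\# = V_2 \perp V_{a-2}$ and acting trivially on $V_{a-2}$). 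Then $h$ has order $p$ and Jordan form $[J_2, J_1^{a-2}]$ on $V_\#$. Applying Lemma \ref{l:jf1} with $k = f$, the element $h$ has Jordan form $[J_2^f, J_1^{f(a-2)}]$ on $V$.

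Finally, since $a \geqslant 4$ we have $f(a-2) \geqslant 2 > 0$, so this Jordan form has a nonzero $J_1$-part and is therefore not $[J_p^{d/p}]$. Hence condition (ii) of Corollary \ref{c:ae} also fails, so $G$ is not almost elusive. Concretely, by the Proposition immediately preceding Corollary \ref{c:ae} there exists $v \in V^*$ such that $(v,h) \in G$ is a derangement of order $p$, and since its linear part $h$ is nontrivial, $(v,h)$ cannot be $G$-conjugate to any translation $(w,1)$; thus $G$ has at least two conjugacy classes of derangements of order $p$.

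The argument is short, and the only points requiring care are: (a) confirming that ${\rm Sp}_a(q)$ does contain an order-$p$ element with precisely one Jordan block of size $2$ on the natural module — a transvection works for every $q$, including $q$ even; and (b) the role of the hypothesis $a \geqslant 4$. When $p = 2$ the block $J_2$ coincides with $J_p$, so it is the surviving $J_1$-blocks — present exactly because $a - 2 > 0$ — that force the failure of Corollary \ref{c:ae}(ii); for $a = 2$ the conclusion would be false, in agreement with the almost elusive examples found in case II.
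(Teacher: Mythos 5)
Your proof is correct and takes essentially the same route as the paper: the same order-$p$ element (a transvection with Jordan form $[J_2,J_1^{a-2}]$ on the natural symplectic module), Lemma \ref{l:jf1} to obtain the Jordan form $[J_2^f,J_1^{f(a-2)}]$ on $V$, and Corollary \ref{c:ae} to conclude. Your additional care about which condition of Corollary \ref{c:ae} fails, the $p=2$ case and the role of $a\geqslant 4$ is accurate but simply spells out what the paper leaves implicit.
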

\begin{proof}
In this case ${\rm Sp}_{a}(q)\trianglelefteqslant H$ and $p^d=q^{a}$ with $a\geqs 4$ even. Define $V_\# = (\mathbb{F}_{q})^{a/2}$ and let $h\in {\rm Sp}_{a}(q)$ be an element of order $p$ with Jordan form $[J_2,J_1^{a-2}]$ on $V_{\#}$. Then $h$ has Jordan form $[J_2^k,J_1^{k(a-2)}]$ on $V$, where $d=ak$. Thus by Corollary \ref{c:ae}, $G$ is not almost elusive. 
\end{proof}

\begin{prop}\label{p:CaseIV}
Assume $G$ is as in case IV of Table \ref{tab:2trans}. Then $G$ is not almost elusive.
\end{prop}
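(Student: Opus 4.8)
The plan is to treat case IV exactly as in the proof of Proposition~\ref{p:CaseIII}, finding a suitable element of order $p$ in the socle whose Jordan form on $V$ obstructs almost elusivity via Corollary~\ref{c:ae}. Here $G_2(q)'\trianglelefteqslant H$ with $q$ even and $n=p^d=q^6$, so $p=2$, $d=6f$ where $q=2^f$, and $k=d/6=f$. By Lemma~\ref{l:jf1}, an element $h\in G_2(q)'$ of order $2$ with Jordan form $[J_2^{a_2},J_1^{a_1}]$ on the $6$-dimensional natural module $V_\#=(\mathbb{F}_q)^6$ acquires Jordan form $[J_2^{f a_2},J_1^{f a_1}]$ on $V=(\mathbb{F}_2)^d$. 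So by Corollary~\ref{c:ae}(ii) it suffices to exhibit an involution $h\in G_2(q)$ with $a_2>0$ and $a_1>0$ on $V_\#$ (equivalently, a non-regular involution, i.e.\ one whose Jordan form on $V_\#$ is not $[J_2^3]$); then $h$ has Jordan form $[J_2^{f a_2},J_1^{f a_1}]$ on $V$ with both exponents positive, and $G$ is not almost elusive.

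First I would recall that $q\geqs 2$ is even, so $G_2(q)$ is a genuine exceptional group of Lie type (there is no coincidence to worry about since $G_2(2)'$ is excluded only from the \emph{exceptional} analysis, not here — and in any case we will only need $q\geqs 2$, handling $q=2$ separately by the computational method of Proposition~\ref{p:smalldims} if necessary, since $2^6=64<4095$). Next I would appeal to the known classification of unipotent (involution) classes in $G_2(q)$ for $q$ even: by inspection of Aschbacher's or Liebeck--Seitz's tables — or directly from Lemma~\ref{l:g2onmin} together with the embedding ${\rm SL}_3^\epsilon(q)\hookrightarrow G_2(q)$ — there is an involution in $G_2(q)$ whose action on the $6$-dimensional minimal module $V_\#$ has Jordan form $[J_2^2,J_1^2]$ (equivalently, $[J_2,J_1^4]$). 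Concretely, take a transvection $x\in{\rm SL}_3(q)$, which acts as $[J_2,J_1]$ on the natural $3$-space; by Lemma~\ref{l:g2onmin}, $x$ acts on $V_\#$ as ${\rm diag}(A,A^{-T})=[J_2^2,J_1^2]$ since $A=[J_2,J_1]$ and $A^{-T}=[J_2,J_1]$ too. This $x$ is nontrivial of order $2$, so by Lemma~\ref{l:jf1} it has Jordan form $[J_2^{2f},J_1^{2f}]$ on $V$, which is not $[J_2^{3f}]=[J_2^{d/2}]$.

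Then I would invoke Corollary~\ref{c:ae}: since $|H|$ is divisible by $2=p$ and $d$ is divisible by $2$, $G$ is almost elusive only if \emph{every} element of $H$ of order $2$ has Jordan form $[J_2^{d/2}]$ on $V$; but the element $x$ just constructed violates this. Hence $G$ is not almost elusive, completing the proof. Equivalently, one can phrase it via the preceding proposition: there exists $v\in V^*$ with $(v,x)\in G$ a derangement of order $2$ not $G$-conjugate to any $(w,1)$, so $G$ has at least two classes of prime-order derangements.

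The main obstacle — really the only point requiring care — is pinning down that $G_2(q)$ (for all even $q\geqs 2$) genuinely contains a non-regular involution, i.e.\ one not acting as $[J_2^3]$ on $V_\#$. The clean route is the explicit construction through ${\rm SL}_3(q)\le G_2(q)$ via Lemma~\ref{l:g2onmin} as above, which works uniformly; alternatively one cites the tables of unipotent classes (e.g.\ in \cite{Law} or standard references) where $G_2(q)$ with $q$ even has involution classes of type $A_1$, $\tilde A_1$ and $(\tilde A_1)_2$ (or $A_1\tilde A_1$), of which only the last is regular on the $6$-dimensional module. Either way, once that single element is in hand, Lemma~\ref{l:jf1} and Corollary~\ref{c:ae} finish the argument immediately; the small case $q=2$ can be dispatched by direct computation in {\sc Magma} exactly as in Proposition~\ref{p:smalldims} if one prefers not to rely on the ${\rm SL}_3(q)$ embedding there.
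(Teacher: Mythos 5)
Your proof is correct and follows essentially the same route as the paper: a transvection in ${\rm SL}_3(q)\leqs G_2(q)$ acting as $[J_2,J_1]$ on the natural $3$-space, Lemma \ref{l:g2onmin} giving Jordan form $[J_2^2,J_1^2]$ on the $6$-dimensional minimal module, Lemma \ref{l:jf1} giving $[J_2^{2f},J_1^{2f}]\neq[J_2^{d/2}]$ on $V$, and Corollary \ref{c:ae} to conclude, with the $q=2$ case (where only $G_2(2)'\trianglelefteqslant H$ is available and Lemma \ref{l:g2onmin} requires $q\geqs 3$) dispatched computationally exactly as the paper does.
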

\begin{proof}
Here $p=2$ and $G_2(q)^{'}\trianglelefteqslant H$ with $2^d=q^6$. We note that we can handle the case $d=6$ easily in {\sc Magma} using the same method outlined in the proof of Proposition \ref{p:smalldims}. Thus we may assume that $d\geqs 12$ and $G_2(q)\trianglelefteqslant H$.

Note that ${\rm SL}_3(q){:}2$ is a maximal subgroup of $G_2(q)$ (see \cite[Table 8.30]{BHR} for example). Let $W$ denote the natural ${\rm SL}_3(q)$ module and let $V_{\#}=(\mathbb{F}_q)^6$ denote the minimal module of $G_2(q)$ over $\mathbb{F}_q$. Take $h\in {\rm SL}_3(q)\leqs H$ to be an element of order $2$ with Jordan form $[J_2,J_1]$ on $W$. Then by Lemma \ref{l:g2onmin}, $h$ has Jordan form $[J_2^2,J_1^2]$ on $V_\#$ and thus $h$ has Jordan form $[J_2^{d/3},J_1^{d/3}]$ on $V$. Therefore Corollary \ref{c:ae} implies that $G$ is not almost elusive. 
\end{proof}

This concludes the proof of Theorem \ref{t:mainaffine}.

\section{Quasiprimitive Groups}\label{s:quasi}

In this section we complete the proof of the classification of all quasiprimitive almost elusive groups by proving Theorem \ref{t:mainquasi}. By \cite[Theorem 1]{BHall} a quasiprimitive group is almost elusive only if it is an almost simple or a 2-transitive affine group. Every 2-transitive affine group is primitive, so in view of Theorem \ref{t:fullaeprim} we may assume $G$ is almost simple and imprimitive. 

For the remainder of this section we assume that $G$ is a finite almost simple group with socle $G_0$ and $H$ is a core-free non-maximal subgroup of $G$ such that $G=G_0H$. In particular, we may embed $H$ in a maximal subgroup $M$ of $G$. It is easy to show that $M$ must in fact be core-free.

\begin{lem}\label{l:core-free}
Every maximal overgroup of $H$ in $G$ is core-free. 
\end{lem}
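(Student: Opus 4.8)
The plan is to argue by contradiction: suppose $M$ is a maximal overgroup of $H$ in $G$ that is not core-free, so that $M$ contains a nontrivial normal subgroup $N$ of $G$. First I would recall that $G$ is almost simple with socle $G_0$, so the minimal normal subgroup of $G$ is $G_0$ itself and every nontrivial normal subgroup of $G$ contains $G_0$. Hence $N \geqs G_0$, and in particular $G_0 \leqs M$. Combined with the standing hypothesis that $G = G_0 H$ and $H \leqs M$, this gives $G = G_0 H \leqs M$, so $M = G$, contradicting the fact that $M$ is a (proper) maximal subgroup.

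The one point that needs a word of care is the claim that every nontrivial normal subgroup of an almost simple group $G$ with socle $G_0$ contains $G_0$. This is standard: $G_0$ is the unique minimal normal subgroup of $G$ (since $C_G(G_0) = 1$ for almost simple $G$, any minimal normal subgroup must meet $G_0$ nontrivially and hence, being normal in $G_0$-normalised, equals $G_0$), so any nontrivial normal subgroup $N$ satisfies $N \cap G_0 \neq 1$, and as $N \cap G_0 \trianglelefteqslant G_0$ with $G_0$ simple we get $N \cap G_0 = G_0$, i.e.\ $G_0 \leqs N$.

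There is essentially no obstacle here — the statement is a direct consequence of the definitions of ``almost simple'' and ``core-free'', together with the hypothesis $G = G_0 H$ that was imposed at the start of the section. The only mild subtlety is making sure the argument is phrased so that it applies to \emph{every} maximal overgroup of $H$, not just one: the key input is that any such overgroup $M$ contains $H$, hence contains $G_0 H = G$ once it contains $G_0$, which forces $M = G$ and rules out $M$ being a proper maximal subgroup unless it is core-free. I would present the proof in two or three sentences along these lines.
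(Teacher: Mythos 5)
Your proposal is correct and follows essentially the same argument as the paper: assume $M$ is not core-free, deduce $G_0\leqs M$ since $G_0$ is the unique minimal normal subgroup of the almost simple group $G$, and then use $G=G_0H\leqs M$ to contradict $M<G$. The paper phrases the last step via quasiprimitivity of $G$ (which is where $G=G_0H$ comes from), but this is the same reasoning as your appeal to the standing hypothesis.
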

\begin{proof}
Let $M$ be a maximal subgroup of $G$ such that $H<M$. Suppose for contradiction that $M$ is not core-free. Then $G_0$ is a subgroup of $M$ since $G_0$ is the unique minimal normal subgroup of $G$. It follows that $G_0H\leqs M$. However $G$ is quasiprimitive, so $G_0$ is a transitive subgroup of $G$ and thus $G=G_0H$. This a contradiction since $M<G$.  
\end{proof}

In particular, this means that $G$ acts primitively on the set of right cosets of $M$. Recall that $x\in G$ is a derangement if and only if $x^G\cap H=\emptyset$, where $x^G$ denotes the conjugacy class of $x$ in $G$.

\begin{lem}\label{l:reducquas}
Suppose that $(G,H)$ is almost elusive and $H<M$ with $M$ maximal in $G$. Then $(G,M)$ is almost elusive and is therefore one of the cases in Table \ref{tab:fullastab1} or \ref{tab:fullastab2}.
\end{lem}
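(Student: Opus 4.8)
The plan is to argue by contraposition on the derangement side: a derangement of prime order for the action of $G$ on the cosets of $M$ pulls back to a derangement of prime order for the action on the cosets of $H$, because $H\leqs M$ shrinks the point stabiliser and hence can only enlarge the set of derangements. Concretely, recall $x\in G$ is a derangement for the action on cosets of $M$ precisely when $x^G\cap M=\emptyset$, and similarly $x$ is a derangement for the action on cosets of $H$ precisely when $x^G\cap H=\emptyset$. Since $H\leqs M$, we have $x^G\cap H\subseteq x^G\cap M$, so $x^G\cap M=\emptyset$ forces $x^G\cap H=\emptyset$. Thus every prime order derangement of $(G,M)$ is a prime order derangement of $(G,H)$, and this map is injective on $G$-conjugacy classes since the classes are the same ambient objects $x^G$ in both cases.

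The second step is to transfer the ``at most one class'' condition. Since $(G,H)$ is almost elusive, there is exactly one $G$-class of prime order derangements for the action on cosets of $H$. By the containment just established, the prime order derangement classes for $(G,M)$ inject into this single class, so $(G,M)$ has at most one $G$-class of prime order derangements. On the other hand, $M$ is a core-free maximal subgroup of $G$ by Lemma~\ref{l:core-free}, so the action of $G$ on the cosets of $M$ is primitive, and by Jordan's theorem (or rather the Fein--Kantor--Schacher strengthening, combined with the fact that $G$ is not elusive here: indeed $(G,H)$ is almost elusive and hence has a derangement of prime order, which by the containment is also a derangement of prime order for $(G,M)$) the action on cosets of $M$ has at least one $G$-class of prime order derangements. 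Hence $(G,M)$ has exactly one $G$-class of prime order derangements, i.e.\ $(G,M)$ is almost elusive.

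Finally, since $(G,M)$ is an almost simple primitive almost elusive group with socle $G_0$, Theorem~\ref{t:fullaeprim} applies, placing $(G,M)$ in Table~\ref{tab:fullastab1} or Table~\ref{tab:fullastab2}; note also $M$ is core-free maximal and $G=G_0M$ (the latter holding since $G=G_0H\leqs G_0M$), so all hypotheses of that theorem are met. This gives the claim.

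The only genuinely delicate point is the bookkeeping in the second step: one must be sure that ``$(G,H)$ has a unique class of prime order derangements'' together with ``every prime order derangement of $(G,M)$ is one for $(G,H)$'' really does yield ``$(G,M)$ has a unique class'' rather than possibly zero, and this is exactly where non-elusiveness of $G$ (inherited from the existence of a prime order derangement for $(G,H)$) is needed. Everything else is a direct unwinding of the definitions, so I expect no substantial obstacle.
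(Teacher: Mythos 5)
Your first step (derangements for the action on cosets of $M$ are derangements for the action on cosets of $H$, since $x^G\cap H\subseteq x^G\cap M$) is correct and is exactly the containment the paper uses. The genuine gap is in your second step, where you claim that $(G,M)$ has \emph{at least} one class of prime order derangements because the prime order derangement guaranteed by almost elusivity of $(G,H)$ ``by the containment is also a derangement of prime order for $(G,M)$''. This runs the implication backwards: the containment $x^G\cap H\subseteq x^G\cap M$ shows that derangements for $G/M$ are derangements for $G/H$, not the converse. An element of prime order with $x^G\cap H=\emptyset$ may perfectly well satisfy $x^G\cap M\neq\emptyset$, so the unique derangement class of $(G,H)$ gives no information about non-elusivity of $(G,M)$. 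Nor do Jordan's theorem or Fein--Kantor--Schacher \cite{FKS} rescue this: they only produce derangements, respectively derangements of prime \emph{power} order, and say nothing about prime order -- indeed elusive primitive groups exist, e.g.\ ${\rm M}_{11}$ on $12$ points.

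The correct way to close this gap, and the route the paper takes, is to argue by contradiction that $(G,M)$ cannot have two or more classes of prime order derangements (these would pull back to two classes for $(G,H)$, contradicting almost elusivity), leaving the two possibilities that $(G,M)$ is almost elusive or elusive. The elusive case is then eliminated using Giudici's classification \cite{G}: since the action of $G$ on $G/M$ is primitive almost simple, the only elusive possibility is $(G,M)=({\rm M}_{11},{\rm L}_2(11))$, and a short {\sc Magma} computation shows that no subgroup $H<{\rm L}_2(11)$ makes $({\rm M}_{11},H)$ almost elusive, contradicting the hypothesis on $(G,H)$. Without this classification-plus-computation step (or some substitute argument ruling out elusivity of $(G,M)$), your proof only establishes that $(G,M)$ is almost elusive or elusive, which is weaker than the statement. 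Your final appeal to Theorem \ref{t:fullaeprim}, using Lemma \ref{l:core-free} and $G=G_0H\leqs G_0M$, is fine once almost elusivity of $(G,M)$ is actually in hand.
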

\begin{proof}
Suppose $(G,M)$ is neither almost elusive nor elusive. Then there exist distinct conjugacy classes $x^G$ and $y^G$ of elements of prime order in $G$ such that $x^G\cap M=\emptyset $ and $y^G\cap M=\emptyset$. Since $H<M$, there are at least two distinct conjugacy classes of derangements of prime order in $G$, which is a contradiction. Thus either $(G,M)$ is almost elusive and thus by Theorem \ref{t:fullaeprim} is found in Table \ref{tab:fullastab1} or \ref{tab:fullastab2}, or $(G,M)$ is elusive and thus by \cite{G} $(G,M)= ({\rm M}_{11},\Li_2(11))$. It is a simple calculation in {\sc Magma} to show that there are there are no almost elusive cases if $(G,M)= ({\rm M}_{11},\Li_2(11))$. The result follows. 
\end{proof}

To complete the classification of the quasiprimitive almost elusive groups it now remains to handle the cases in which $(G,M)$ is contained in Table \ref{tab:fullastab1} or \ref{tab:fullastab2}.

\subsection{Preliminary results}\label{ss:quasiprelim}

In this section we present some preliminary results for the proof of Theorem \ref{t:mainquasi}. 
We remind the reader that $G$ is a finite almost simple group with socle $G_0$ and $H$ is a core-free non-maximal subgroup of $G$ such that $G=G_0H$. Additionally, $H<M$ such that $M$ is a core-free maximal subgroup of $G$.
We begin by stating a number theoretic lemma, \cite[Lemma 2.6]{BTV}.

\begin{lem}\label{l:btv}
Let $r$ and $s$ be primes and let $v$ and $w$ be positive integers. If $r^v + 1 =s^w$ then one of the following holds:
\begin{itemize}\addtolength{\itemsep}{0.2\baselineskip}
    \item[{\rm (i)}] $(r,s,v,w)=(2,3,3,2)$.
    \item[{\rm (ii)}] $(r,w)=(2,1)$ and $s=2^v+1$ is a Fermat prime. 
    \item[{\rm (iii)}] $(s,v)=(2,1)$ and $r=2^w-1$ is a Mersenne prime. 
\end{itemize}
\end{lem}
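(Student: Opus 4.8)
The plan is to establish this by an elementary parity analysis combined with a couple of standard factorisations; no appeal to Catalan's conjecture is required. The key opening remark is that exactly one of $r$ and $s$ must equal $2$: if $r$ is odd then $r^v+1$ is even, forcing $s=2$, while if $r=2$ then $r^v+1$ is odd, forcing $s$ odd (and the possibility $r=s$ is excluded since $2^v+1=2^w$ has no solution). This bifurcates the proof.

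\textbf{Case $r$ odd (so $s=2$ and $r^v=2^w-1$).} First I would rule out even $v$: if $v=2m$, then $r^{2m}$ is an odd square, hence $r^{2m}\equiv 1\imod{4}$, so $2^w=r^{2m}+1\equiv 2\imod{4}$, which forces $w=1$ and then $r^{2m}=1$, a contradiction. Next I would rule out odd $v\geqs 3$ using the factorisation $2^w=r^v+1=(r+1)\bigl(r^{v-1}-r^{v-2}+\cdots-r+1\bigr)$: the second factor is a sum of $v$ terms each of odd absolute value, hence odd, and pairing consecutive terms shows it is strictly larger than $1$ once $v\geqs 3$; this exhibits an odd divisor $>1$ of a power of $2$, which is absurd. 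Hence $v=1$, so $r=2^w-1$; since $r$ is prime, $w$ must be prime and $r$ is a Mersenne prime. This is case (iii), with $(s,v)=(2,1)$.

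\textbf{Case $r=2$ (so $s$ is an odd prime and $2^v=s^w-1$).} If $w=1$, then $s=2^v+1$, and a prime of this shape forces $v$ to be a power of $2$ (otherwise $2^v+1$ has a proper divisor of the form $2^b+1$ with $b\mid v$ and $v/b$ odd), so $s$ is a Fermat prime; this is case (ii), with $(r,w)=(2,1)$. If $w$ is odd and $w\geqs 3$, then $2^v=s^w-1=(s-1)\bigl(s^{w-1}+\cdots+s+1\bigr)$, and the second factor is again an odd integer $>1$, contradicting that it divides $2^v$. Finally, if $w=2m$ is even, then $2^v=(s^m-1)(s^m+1)$ is a product of two even integers differing by $2$, so their greatest common divisor is exactly $2$; writing $s^m-1=2a$ and $s^m+1=2b$ with $\gcd(a,b)=1$ and $ab=2^{v-2}$, and using $a<b$, we get $a=1$, hence $s^m=3$, so $s=3$, $m=1$, $w=2$ and $v=3$. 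This is case (i).

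The one step requiring genuine (if routine) care — and the place where an argument can slip — is verifying in the two odd-exponent subcases that the relevant alternating or geometric sum is strictly greater than $1$, not merely odd, since that inequality is precisely what produces the contradiction; this follows by grouping consecutive terms in pairs and bounding below. Apart from that, the argument is just bookkeeping with parities and greatest common divisors, and the case division is exhaustive because one of $r,s$ is forced to be $2$ and, in the surviving branch, the remaining exponent is either $1$, even, or odd and at least $3$.
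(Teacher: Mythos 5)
Your proof is correct, and it is genuinely different from what the paper does: the paper offers no argument at all for this lemma, simply quoting it as \cite[Lemma 2.6]{BTV}, whereas you give a self-contained elementary proof. Your case division is exhaustive (parity forces exactly one of $r,s$ to equal $2$), and the key steps all hold: for $s=2$, an even exponent $v$ is killed by $r^{2m}+1\equiv 2 \imod{4}$, and an odd exponent $v\geqs 3$ by the factorisation $r^v+1=(r+1)\bigl(r^{v-1}-r^{v-2}+\cdots+1\bigr)$, whose second factor is odd (a sum of $v$ odd terms with $v$ odd) and exceeds $1$ after pairing terms, so it cannot divide a power of $2$; symmetrically for $r=2$ with $w$ odd, $w\geqs 3$; and for $r=2$, $w=2m$, the two factors $s^m-1$ and $s^m+1$ are even numbers differing by $2$, whence your coprime splitting forces $s^m=3$ and the exceptional solution $2^3+1=3^2$ (one could shortcut this slightly by noting both factors must themselves be powers of $2$ differing by $2$, hence $2$ and $4$; and the implicit requirement $v\geqs 2$ is automatic since $s^w-1\geqs 8$ here). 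The $w=1$ branch correctly recovers the Fermat-prime condition via the divisor $2^b+1$ when $v$ has an odd factor. What the paper's route buys is brevity — the statement is a classical fact (essentially Catalan's equation with prime bases, going back to Lebesgue/Gerono-type arguments) and the citation outsources it; what your route buys is a complete, purely elementary verification using only parity and the two standard factorisations, with no reliance on the literature.
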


We recall that for any subgroup $K<G$ we define $K_0=K\cap G_0$. Additionally, $\alpha(K)$ denotes the set of distinct prime divisors of $|K|$ and $\pi(K)=|\alpha(K)|$.

\begin{lem}\label{l:piMpiH}
Assume $(G,M)$ is almost elusive with $\pi(G_0)=\pi(M_0)+1$. Then $(G,H)$ is almost elusive only if $\pi(M_0)=\pi(H_0)$.
\end{lem}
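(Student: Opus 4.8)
The plan is to argue the contrapositive: assuming $\pi(M_0)\neq\pi(H_0)$, I will exhibit two distinct conjugacy classes of derangements of prime order in $G$ acting on the cosets of $H$, so that $(G,H)$ is not almost elusive. Since $H_0=H\cap G_0$ is contained in $M_0$, we have $\alpha(H_0)\subseteq\alpha(M_0)\subseteq\alpha(G_0)$, so the inequality $\pi(H_0)\leqs\pi(M_0)$ always holds and it suffices to handle the strict case $\pi(H_0)\leqs\pi(M_0)-1$. Combining this with the hypothesis $\pi(G_0)=\pi(M_0)+1$ gives $\pi(G_0)-\pi(H_0)\geqs 2$, and since $\alpha(H_0)\subseteq\alpha(G_0)$ this means the set $\alpha(G_0)\setminus\alpha(H_0)$ contains at least two distinct primes, say $s_1$ and $s_2$.

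Next I would record that $G=G_0H$ (a standing hypothesis of the section, equivalently a consequence of quasiprimitivity), so that $|\Omega|=|G:H|=|G_0:H_0|$; hence $|\Omega|$ is divisible by every prime in $\alpha(G_0)\setminus\alpha(H_0)$, in particular by $s_1$ and by $s_2$. Then for each $i\in\{1,2\}$ I would apply Lemma \ref{l:classes} with the prime $s_i$: since $s_i\nmid|H_0|$, the number $b_{s_i}$ of $H_0$-classes of elements of order $s_i$ is $0$, while $s_i\mid|G_0|$ forces the number $a_{s_i}$ of $G_0$-classes of such elements to be at least $1$. Thus $a_{s_i}>b_{s_i}$ and $G$ contains a derangement of order $s_i$.

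Finally, since $s_1\neq s_2$, the two derangements produced above have different orders and therefore lie in distinct conjugacy classes of $G$. Hence $(G,H)$ admits at least two conjugacy classes of derangements of prime order, so it is not almost elusive; this contradiction completes the proof. I do not anticipate any real obstacle here: the argument is essentially the quasiprimitive analogue of the elementary observation $\pi(G_0)-\pi(H_0)\leqs 1$ used in the primitive case, the only point requiring care being that we must invoke $G=G_0H$ and Lemma \ref{l:classes} (valid for quasiprimitive groups) rather than quoting the primitive statement verbatim. Note also that the assumption "$(G,M)$ is almost elusive" enters only through the arithmetic constraint $\pi(G_0)=\pi(M_0)+1$ in the hypothesis; the actual work is carried out directly with $H_0$.
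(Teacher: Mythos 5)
Your proposal is correct and follows essentially the same route as the paper: the key point in both is that each prime dividing $|G_0|$ but not $|H_0|$ yields a conjugacy class of prime order derangements, so almost elusivity forces $\pi(G_0)-\pi(H_0)\leqs 1$, which combined with $\pi(G_0)=\pi(M_0)+1$ and $\pi(H_0)\leqs\pi(M_0)$ gives the result. The only cosmetic difference is that you invoke Lemma \ref{l:classes} (with $b_{s_i}=0$) where the paper notes directly that such elements of $G_0$ are derangements since their $G$-conjugates lie in $G_0$ and hence meet $H$ only inside $H_0$.
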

\begin{proof}
Assume that $(G,H)$ is almost elusive. We note that if $r$ is a prime dividing $|G_0|$ but not $|H_0|$ then every element in $G_0$ of order $r$ is a derangement. Thus $\pi(G_0)-\pi(H_0)\leqs 1$. Therefore the result follows easily since $\pi(M_0)\geqs \pi(H_0)$. 
\end{proof}

Before we state our next result, we first provide a brief description of the conjugacy classes of prime order semisimple elements in the group $G={\rm PGL}_2(q)$, where $q=p^f$ is a prime power. 
We first look at the semisimple involutions (in this case we must take $q$ to be odd). In $G$ there are 2 distinct classes of involutions represented by $t_1$ and $t_1'$, using the notation from \cite[Section 3.2.2]{BG_book} which is consistent with \cite[Table 4.5.1]{GLS}. The element $t_1$ lifts to an involution in ${\rm GL}_2(q)$, while $t_1'$ lifts to an irreducible element of order 4. In particular, we note that $G_0$ has a unique class of involutions. Furthermore,  $t_1\in G_0$ if and only if $q\equiv 1\imod 4$ and $t_1'\in G_0$ if and only if $q\equiv 3\imod 4$. See \cite[Section 3.2.2]{BG_book} for more details.

For the remainder of the discussion we look at odd prime order semisimple elements.
We note that $x^{G_0}=x^{G}$, where $x\in G$ is an element of odd prime order $r\neq p$ (see \cite[Theorem 4.2.2(j)]{GLS}) and we will use the notation discussed here in the proof of Lemma \ref{l:conjclassinter}.
Let $r$ be an odd prime divisor of $|G|$ such that $r\neq p$. Then either $r$ divides $q-1$ or $q+1$. 
We define $k=1$ if $r$ divides $q-1$ and $k=2$ otherwise. Take $x\in G$ to be an element of order $r$. Then up to conjugacy $x$ lifts to an element $\hat{x}\in{\rm GL}_2(q)$ that is diagonalisable over $\mathbb{F}_{q^k}$, with eigenvalues $[\lambda^{i},\lambda^{-i}]$ if $k=1$ and $[\lambda^i,\lambda^{qi}]$ if $k=2$, where $\lambda$ is a nontrivial $r^{th}$ root of unity in $\mathbb{F}_{q^k}$ and $1\leqs i \leqs (r-1)/2$. The $G$-classes of elements of order $r$ are uniquely determined by these eigenvalue sets. 
Thus there are $(r-1)/2$ such $G$-classes of elements of order $r$ in $G$ for both $k=1$ and 2. We abuse notation and write the representatives of the $(r-1)/2$ distinct $G$-classes as $[\Lambda]Z$ where $\Lambda=[\lambda^{i},\lambda^{-i}]$ if $k=1$ and $[\lambda^i,\lambda^{qi}]$ otherwise, with $1\leqs i\leqs (r-1)/2$, and $Z$ is the centre of ${\rm GL}_2(q)$. See \cite[Section 3.2.1]{BG_book} for more details.

\begin{lem}\label{l:conjclassinter}
Let $G=\Li_2(q)$ with $q=p^f\geqs 4$ a prime power and let $H$ be a subgroup of $G$. Suppose $r\neq p$ is an odd prime divisor of $|H|$ and take $x\in G$ to be an element of order $r$. Then $x^G\cap H\neq \emptyset$. 
\end{lem}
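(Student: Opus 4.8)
The plan is to prove the slightly stronger statement that for any element $y\in H$ of order $r$, the cyclic subgroup $\langle y\rangle$ already meets \emph{every} $G$-class of elements of order $r$. The existence of such a $y$ is immediate: $r$ is a prime dividing $|H|$, hence $|G|$, so Cauchy's theorem supplies an element $y\in H$ with $|y|=r$.

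Next I would recall the description of the classes of elements of order $r$ in $G=\Li_2(q)$. Since $r$ is odd, every element of order $r$ in ${\rm PGL}_2(q)$ lies in $\Li_2(q)$, and, as recalled before the lemma, the ${\rm PGL}_2(q)$-class and the $\Li_2(q)$-class of an element of odd prime order coincide; hence $G$ has exactly the $(r-1)/2$ classes of elements of order $r$ described there, each determined by the unordered eigenvalue pair $\{\mu,\mu^{-1}\}$ of a lift to ${\rm SL}_2(q)$, where $\mu$ is a primitive $r$-th root of unity, and as this pair runs over all such inverse pairs (equivalently over $\{\lambda^{i},\lambda^{-i}\}$ with $1\leqs i\leqs (r-1)/2$ for a fixed primitive $r$-th root $\lambda$) we obtain all $(r-1)/2$ classes. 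Now, since $r$ is odd and coprime to $p$, our element $y$ lifts to $\hat y\in{\rm SL}_2(q)$ of order $r$, diagonalisable over $\mathbb{F}_{q^k}$ (with $k=1$ if $r\mid q-1$ and $k=2$ if $r\mid q+1$, one of which holds as $r\mid|G|$ with $r\neq p$ odd) and with eigenvalues $\{\mu,\mu^{-1}\}$ for some primitive $r$-th root of unity $\mu$. Then $\hat y^{\,i}$ has eigenvalue pair $\{\mu^{i},\mu^{-i}\}$, and because $\mu$ has order $r$ these pairs are pairwise distinct for $i=1,\dots,(r-1)/2$. Therefore $y,y^{2},\dots,y^{(r-1)/2}$ lie in pairwise distinct $G$-classes of elements of order $r$, and so they represent all of them.

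It follows that $x$ is $G$-conjugate to $y^{i}$ for some $i\in\{1,\dots,(r-1)/2\}$, and since $y^{i}\in\langle y\rangle\leqs H$ we conclude $x^{G}\cap H\neq\emptyset$. The argument is short, and the only points requiring a little care are the passage from ${\rm PGL}_2(q)$ to $\Li_2(q)$ for the class count and the existence of an order-$r$ lift in ${\rm SL}_2(q)$ — both routine consequences of $r$ being odd — so I do not anticipate a genuine obstacle.
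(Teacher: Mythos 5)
Your proposal is correct and follows essentially the same route as the paper: take $y\in H$ of order $r$, use the parametrisation of the $(r-1)/2$ classes of order-$r$ elements by eigenvalue pairs of a lift, and observe that the powers $y,y^2,\dots,y^{(r-1)/2}$ have pairwise distinct eigenvalue pairs and hence represent every such class, so $x^G$ meets $\langle y\rangle\leqs H$. The extra care you take over the ${\rm PGL}_2$ versus $\Li_2$ class count and the order-$r$ lift to ${\rm SL}_2(q)$ is fine and only spells out details the paper leaves implicit in its preliminary discussion.
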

\begin{proof}
We note that $|G|=\frac{q}{(2,q-1)}(q-1)(q+1)$ and so either $r$ divides $q-1$ or $q+1$. The two cases are very similar, so we only provide details in the case where $r$ divides $q-1$. Let $y\in H$ be an element of order $r$. Then with out loss of generality we can assume $y\in ([\lambda,\lambda^{-1}]Z)^G$. This implies that $y^t\in([\lambda^t,\lambda^{-t}]Z)^G$ for all $1\leqs t\leqs (r-1)/2$, so $\langle y\rangle$ intersects all $G$-classes of elements of order $r$. Thus the result follows since $\langle y\rangle\leqs H$.
\end{proof}

\begin{cor}\label{c:L2pOddDer}
Let $G$ be the almost simple group $\Li_2(q)$ or ${\rm PGL}_2(q)$, where $q=p^f\geqs 4$ is a prime power. Let $H$ be a core-free subgroup of $G$ and suppose $r\neq p$ is an odd prime divisor of $|H|$. Then $(G,H)$ contains no derangements of order $r$.
\end{cor}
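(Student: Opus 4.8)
The plan is to deduce this corollary directly from Lemma \ref{l:conjclassinter}, which states that in $\Li_2(q)$ every element of odd order $r \neq p$ meets every subgroup whose order is divisible by $r$. The only genuine content is the reduction of the ${\rm PGL}_2(q)$ case to the $\Li_2(q)$ case, and this is routine once one records how elements of odd prime order sit inside the socle.

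First I would dispose of a trivial overlap: if $q$ is even then ${\rm PGL}_2(q) = \Li_2(q)$, so we may assume either $G = \Li_2(q)$, or that $q$ is odd and $G = {\rm PGL}_2(q)$ with socle $G_0 = \Li_2(q)$ of index $2$. In the first case, since $r \mid |H|$, Lemma \ref{l:conjclassinter} applied to $G$ and $H$ gives $x^G \cap H \neq \emptyset$ for every $x \in G$ of order $r$; hence no element of order $r$ is a derangement, and $(G,H)$ has no derangements of order $r$.

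For the second case let $G = {\rm PGL}_2(q)$ with $q$ odd, and write $H_0 = H \cap G_0$. Since $|G : G_0| = 2$ and $r$ is odd, every $x \in G$ of order $r$ already lies in $G_0 = \Li_2(q)$; likewise $|H : H_0| \leqs 2$, so $r \mid |H_0|$. Applying Lemma \ref{l:conjclassinter} inside $\Li_2(q)$ to the subgroup $H_0$ gives $x^{G_0} \cap H_0 \neq \emptyset$. By the discussion preceding Lemma \ref{l:conjclassinter} (using \cite[Theorem 4.2.2(j)]{GLS}, since $r$ is an odd prime with $r \neq p$) we have $x^{G_0} = x^G$, and as $H_0 \leqs H$ this yields $x^G \cap H \neq \emptyset$. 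So again no element of order $r$ is a derangement, completing the proof.

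There is no serious obstacle here: the argument is a short bookkeeping exercise. The only points needing care are the two facts that, for $r$ an odd prime, elements of order $r$ in ${\rm PGL}_2(q)$ lie in the socle and $r \mid |H_0|$, both immediate from $|G:G_0| = 2$ and the oddness of $r$, together with the class-fusion statement $x^{G_0} = x^G$ recalled just before Lemma \ref{l:conjclassinter}.
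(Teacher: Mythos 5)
Your proposal is correct and follows essentially the route the paper intends: the corollary is stated as an immediate consequence of Lemma \ref{l:conjclassinter}, with the ${\rm PGL}_2(q)$ case handled exactly by the observations that odd-order elements lie in the socle and that $r$ divides $|H\cap G_0|$. The only superfluous step is the appeal to $x^{G_0}=x^{G}$: since $x^{G_0}\subseteq x^{G}$ and $H_0\leqs H$, the inclusion alone already gives $x^{G}\cap H\neq\emptyset$.
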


\subsection{Proof of Theorem \ref{t:mainquasi}} 

We are now ready to prove Theorem \ref{t:mainquasi}. We recall that $G$ is an almost simple group with socle $G_0$ and a non-maximal core-free subgroup $H$ such that $G=G_0H$. Additionally, $H<M$ where $M$ is a core-free maximal subgroup of $G$. By Lemma \ref{l:reducquas}, we may assume that $(G,M)$ is found in Table \ref{tab:fullastab1} or \ref{tab:fullastab2}.  We begin this section by proving Theorem \ref{t:mainquasi} in some small cases. For this let 
\[
\mathcal{A}=\{A_n,\Li_2(q),{\rm L}^{\epsilon}_3(q'),\Un_4(q'),{\rm PSp}_4(q'),\Un_5(2),\,\Un_6(2),\,{\rm PSp}_6(2),\,{}^2F_4(2)',\,G_2(4)\},
\]
where $n\leqs 20$, $q\leqs 49$ and $q'\leqs 8$.

\begin{prop}\label{p:smalldimsquasi}
Theorem \ref{t:mainquasi} holds for $G_0\in\mathcal{A}$.
\end{prop}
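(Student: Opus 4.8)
The plan is to verify Theorem \ref{t:mainquasi} for every socle in the finite list $\mathcal{A}$ by direct computation in {\sc Magma} \cite{Mag}, reusing the machinery from the proof of Proposition \ref{p:smalldim} together with the reduction provided by Lemma \ref{l:reducquas}. Note first that $\mathcal{A}$ is a finite set: the families $A_n$, $\Li_2(q)$, $\Li^{\epsilon}_3(q')$, $\Un_4(q')$ and $\PSp_4(q')$ are all bounded by the stated constraints $n\leqs 20$, $q\leqs 49$ and $q'\leqs 8$.

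For each $G_0\in\mathcal{A}$ I would construct $\Aut(G_0)$ as a permutation group via \verb|AutomorphismGroupSimpleGroup| and run over all intermediate groups $G_0\leqs G\leqs\Aut(G_0)$ using \verb|LowIndexSubgroups|, exactly as in Proposition \ref{p:smalldim}. By Lemma \ref{l:reducquas}, any non-maximal core-free subgroup $H$ with $G=G_0H$ and $(G,H)$ almost elusive is contained in a core-free maximal subgroup $M$ with $(G,M)$ almost elusive, so $(G,M)$ is one of the (finitely many) pairs recorded in Tables \ref{tab:fullastab1} and \ref{tab:fullastab2} with socle in $\mathcal{A}$. Thus for each such $M$ it suffices to enumerate the subgroups $H$ of $M$ up to conjugacy, retain those that are non-maximal in $G$, core-free (checked with \verb|Core|) and satisfy $G=G_0H$ (equivalently $HG_0=G$), and then apply the almost-elusivity test from Proposition \ref{p:smalldim}: compute the conjugacy classes of elements of prime order in $G$ and in $H$, use \verb|IsConjugate| to determine the fusion of $H$-classes in $G$, and record the classes of prime-order derangements, declaring $(G,H)$ almost elusive precisely when exactly one such class occurs. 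Matching the surviving pairs against Tables \ref{tab:imprimitiveAE} and \ref{tab:QuasiInA} yields the conclusion.

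A crucial efficiency ingredient is that $(G,H)$ almost elusive forces $\pi(G_0)-\pi(H_0)\leqs 1$, so I would only enumerate subgroups $H\leqs M$ whose order is divisible by all but at most one prime divisor of $|G_0|$ (and, where useful, prescribe the admissible prime set $\alpha(H_0)$ directly); this keeps the \verb|Subgroups| computation tractable even for the larger members of $\mathcal{A}$ such as ${}^2F_4(2)'$, $G_2(4)$, $\Un_6(2)$ and $\PSp_6(2)$. The main obstacle is precisely this computational cost: a naive enumeration of all subgroups of $M$ is infeasible for the bigger groups, so the $\pi$-restriction above is essential; a secondary point of care is that, when reporting the counts appearing in Table \ref{tab:QuasiInA}, the subgroups isomorphic to a given $H$ must be sorted into $G$-classes (not merely $M$-classes), which requires an explicit $G$-conjugacy test on the relevant representatives.
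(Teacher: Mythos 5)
Your proposal is correct, and it rests on the same two pillars as the paper: Lemma \ref{l:reducquas} reduces to the pairs $(G,M)$ with socle in $\mathcal{A}$ listed in Tables \ref{tab:fullastab1} and \ref{tab:fullastab2}, and the almost-elusivity test from Proposition \ref{p:smalldim} is then applied inside $M$ by machine. The difference is the search strategy within $M$. The paper does not enumerate the full subgroup lattice of $M$: it iterates \texttt{MaximalSubgroups}, checking at each stage that $|L||G_0|/|L\cap G_0|=|G|$ (quasiprimitivity) and that $L$ is core-free, testing $(G,L)$ for almost elusivity, discarding $L$ (and everything below it) whenever $(G,L)$ fails, and recursing into the maximal subgroups of $L$ otherwise. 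The pruning is justified by the same conjugacy-class containment argument as in Lemma \ref{l:reducquas} applied at every level (the derangement set for $(G,L)$ is contained in that for $(G,H)$ whenever $H<L$), together with Giudici's theorem ruling out elusive intermediate actions for these socles; this descent terminates naturally, never requires a full \texttt{Subgroups} call, and produces the subgroup chains needed for the depth data $c=d_G(H)$ in Table \ref{tab:QuasiInA} and Corollary \ref{c:depth} as a by-product. Your one-shot enumeration of subgroups of $M$ with the $\pi$-restriction is logically sound --- the filter is safe because $\pi(G_0)-\pi(H_0)\leqs 1$ forces $|H|$ to be divisible by all but at most one prime divisor of $|G_0|$, so no valid $H$ is discarded, and testing $M$-class representatives suffices since $M$-conjugate subgroups are $G$-conjugate --- but it is computationally heavier for the larger members of $\mathcal{A}$ (e.g.\ $M={\rm J}_2{:}2$, ${\rm Sp}_6(2)$ or $\Un_4(3).2$), and you would still need the extra $G$-conjugacy sorting you mention to recover the class counts $a$ and $b$ of Table \ref{tab:QuasiInA}, which the paper's recursive method organises automatically.
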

\begin{proof}
Let $M$ denote a core-free maximal subgroup of $G$ such that $H<M$. Then by Lemma \ref{l:reducquas}, $(G,M)$ is recorded in Table \ref{tab:fullastab1} or \ref{tab:fullastab2}. As in Proposition \ref{p:smalldim} we can use {\sc Magma} to obtain the groups $G$ and $M$ such that $(G,M)$ is recorded in Tables \ref{tab:fullastab1} or \ref{tab:fullastab2}. Using the \verb|MaximalSubgroups| command we obtain a list of representatives of the conjugacy classes of maximal subgroups of $M$. For each maximal subgroup $L$ we first check that $|L||G_0|/|L\cap G_0| = |G|$, which ensures that $G$ acting on the cosets of $L$ is quasiprimitive, and we use \verb|Core| to check $L$ is a core-free subgroup of $G$. We then check that $(G,L)$ is almost elusive using the same function described in Proposition \ref{p:smalldim}. If $(G,L)$ is not almost elusive then it is discarded. However if $(G,L)$ is almost elusive, then we look at the maximal subgroups of $L$ and repeat this process until we no longer find almost elusive examples. 
\end{proof}

\begin{rmk}
Take $G_0\in\mathcal{A}$. As discussed in Remark \ref{r:quasirmk}, there may be multiple classes of subgroups with representatives isomorphic to $H$ and it may be the case that not all of these classes lead to an almost elusive example. In Table \ref{tab:QuasiInA} for each pair $(G,H)$ there exists a subgroup $K<G$ such that $K\cong H$ and $(G,K)$ is almost elusive. We record the number of classes of subgroups of $G$ that have representatives $K\cong H$ such that $G=G_0K$ and additionally how many of these classes give almost elusive examples. Additionally, if $(G,H)$ is recorded in Table \ref{tab:imprimitiveAE}, then $(G,K)$ is almost elusive for any subgroup $K<G$ such that $K\cong H$. See Section \ref{s:tables} and Remark \ref{r:main} for more details on these cases. 
\end{rmk}

This handles all the cases in which $(G,M)$ is found in Table \ref{tab:fullastab2}. Thus we may now assume that $(G,M)$ is contained in Table \ref{tab:fullastab1} and $G_0\not\in\mathcal{A}$. We go through each of these cases in turn, using the labels in Table \ref{tab:fullastab1} to denote the cases. Recall we use $\alpha(X)$ to represent the set of prime divisors of $|X|$, so $\pi(X)=|\alpha(X)|$. Additionally we remind the reader that $H_0=H\cap G$. We begin with a remark discussing the case U1 in Table \ref{tab:fullastab1}.

\begin{rmk}\label{r:caseU1}
As discussed in Remark \ref{r:rmkfullastab1}(b) and \cite[Section 5.2.3]{Hall} we do not anticipate that any genuine examples of primitive almost elusive groups arise in case U1 of Table \ref{tab:fullastab1}. Thus we do not anticipate any quasiprimitive almost elusive groups in this case either. However, if such a primitive group did arise, then we are able to construct quasiprimitive almost elusive groups such that the point stabiliser $H$ is a core-free non-maximal subgroup of $G$. For instance, assume that $G=\Un_n(2^f).C_{2f}$, $M$ is the stabiliser of a 1-dimensional non-degenerate subspace of the natural module and $(G,M)$ is as in case U1. Take $H=M_0.C_j$, where $M_0=M\cap G_0$ and $j$ divides $2f$. Then $(G,H)$ is almost elusive if and only if $\pi(j)=\pi(2f)$. 
\end{rmk}

\begin{prop}\label{p:L1L2}
Theorem \ref{t:mainquasi} holds for $(G,M)$ as in cases L1 or L2 in Table \ref{tab:fullastab1}. 
\end{prop}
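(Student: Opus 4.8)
The plan is to run the same machinery as in the primitive classification, one layer down. By Lemma~\ref{l:reducquas} we may assume $H<M$ with $(G,M)$ recorded in case L1 or L2 of Table~\ref{tab:fullastab1}; in both cases $G$ is almost simple with socle $G_0=\Li_2(q)$, and $M$ is (up to the relevant outer automorphisms) the stabiliser of a point of the natural projective action, so $M_0=M\cap G_0$ is a Borel subgroup $[q]{:}C_{(q-1)/(2,q-1)}$, the precise arithmetic conditions on $q$ and on $G$ being read off from Table~\ref{tab:fullastab1} (in particular $q=p$ is prime in these entries). Since $\Li_2(q)\in\mathcal{A}$ for $q\leqs 49$, Proposition~\ref{p:smalldimsquasi} lets us assume $q>49$; in particular the sporadic exceptions in the subgroup structure of $\Li_2(q)$ do not occur.

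First I would cut down the possibilities for $H$. Every subgroup of the Borel $M_0$ has the form $Q_1{:}T_1$ with $Q_1$ a subgroup of the unipotent radical $Q$ and $T_1\leqs T$ a subgroup of the torus, and there is an analogous description of $H\leqs M$ allowing for diagonal automorphisms. Since $\pi(G_0)-\pi(M_0)\leqs 1$, Lemma~\ref{l:piMpiH} shows that $(G,H)$ can be almost elusive only if $\pi(H_0)=\pi(M_0)$, that is $\alpha(H_0)=\alpha(M_0)$. As $p$ divides $|M_0|$ this forces $Q\leqs H_0$ (immediate since $q=p$), and it forces $T_1$ to contain an element of order $\ell$ for every prime $\ell$ dividing $|T|$. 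So the surviving candidates are precisely the subgroups $H\leqs M$ with $G=G_0H$ that contain a full unipotent radical and whose order is divisible by every prime dividing $|M_0|$; in particular $H_0=C_p{:}C_e$ with $\alpha(e)=\alpha\big((p-1)/(2,p-1)\big)$.

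It then remains to identify, for such an $H$, exactly which classes of elements of prime order are derangements. Elements of order $p$ lie in $H_0$, and since $H_0$ contains a full unipotent radical it meets every $G_0$-class --- equivalently every $G$-class --- of such elements, so there are no derangements of order $p$. For an odd prime $r\neq p$ dividing $|H_0|$, Corollary~\ref{c:L2pOddDer} gives no derangements of order $r$. This leaves only primes dividing $|G_0|$ but not $|H_0|$, and by the $\pi$-bound there is at most one such prime $r_0$, namely the unique prime with $r_0\mid|G_0|$ and $r_0\nmid|M_0|$. If $r_0$ is odd, the number of $G$-classes of elements of order $r_0$ is read off from the description of the semisimple classes of $\Li_2(q)$ and ${\rm PGL}_2(q)$ recalled before Lemma~\ref{l:conjclassinter}, and it equals one exactly under the arithmetic conditions of Table~\ref{tab:imprimitiveAE}; if $r_0=2$ one argues instead with the two involution classes $t_1,t_1'$, using that $t_1$-type involutions lie in the torus part of $H$ while $t_1'$-type involutions lie in no point stabiliser and so cannot meet $H$, which again leaves a single class of derangements. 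Collecting these facts, $(G,H)$ is almost elusive precisely when $G=G_0H$, $H$ contains a full unipotent radical and $\alpha(H_0)=\alpha(M_0)$, which is exactly cases~I--IV of Table~\ref{tab:imprimitiveAE}; since in $\Li_2(p)$ all subgroups isomorphic to such an $H$ are conjugate, this also gives the statement of Remark~\ref{r:L1L2rmk}. I expect the main obstacle to be the bookkeeping around the prime $2$: keeping track of which of $t_1,t_1'$ lies in $G_0$, in $M$, in $H$ and in $H_0$ as $q$ varies modulo $4$ and $G$ ranges over the groups with $G_0\leqs G\leqs\Aut(G_0)$, together with confirming that any subgroup $Q{:}T_1$ of the Borel with $\alpha(T_1)=\alpha(T)$ genuinely meets every $G_0$-class of elements of order $p$.
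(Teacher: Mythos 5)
Your proposal is correct and follows essentially the same route as the paper: reduce via Lemma \ref{l:piMpiH} to $\alpha(H_0)=\alpha(M_0)$, deduce that $H$ must contain the full unipotent radical and hence be of the form $C_p{:}C_d$ inside the Borel, and then use the involution analysis (a unique class in $G_0$, versus the classes $t_1,t_1'$ in ${\rm PGL}_2(p)$ with $t_1'$ avoiding every Borel) together with the single class of order-$3$ elements in case L2. The only difference is that you also fold in the converse direction via Corollary \ref{c:L2pOddDer}, which the paper defers to Remark \ref{r:L1L2rmk}, using the same tools.
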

\begin{proof}
Here $G_0={\rm L}_2(p)$ and $M=C_p{:}C_{k(p-1)/2}$ is a $P_1$ parabolic subgroup of $G$ (that is, $M$ is a Borel subgroup of $G$), where $k=|G:G_0|$ and $p=2^m-1$ is a Mersenne prime in case L1, and $p=2.3^a-1$ is a prime with $a\geqs 2$ in case L2.

We begin by handling case L1, in which case $G=G_0$ or ${\rm PGL}_2(p)$. Here we show that $(G,H)$ is almost elusive only if it is recorded in case II or III of Table \ref{tab:imprimitiveAE}. Set $M_0=M\cap G_0=C_p{:}C_{(p-1)/2}$. We note that since $p\equiv 3\imod 4$, $|M_0|$ is odd and thus $\alpha(G_0)\setminus\alpha(M_0)=\{2\}$. Therefore every involution in $G_0$ is a derangement and as discussed in Section \ref{ss:quasiprelim} there is a unique class of involutions in $G_0$. 
Assume $(G,H)$ is almost elusive. We recall from the discussion in Section \ref{ss:quasiprelim} that ${\rm PGL}_2(p)$ contains two distinct classes of involutions. One class consists of the involutions in $G_0$, each of which is a derangement, and the other comprises of the involutions in ${\rm PGL}_2(p)\setminus G_0$. Thus we conclude that $|H|$ must be even when $G={\rm PGL}_2(p)$. Now we note that $\pi(M_0)=\pi(H_0)$ by Lemma \ref{l:piMpiH}, which is equivalent to the condition $\alpha(M_0)=\alpha(H_0)$ since $H_0\leqs M_0$. Therefore since $p \equiv 3 \imod 4$ and $|H|$ is even when $k=2$ we must have $H=C_p{:}C_{d}$, where $d$ is a proper divisor of $k(p-1)/2$ and $\alpha(d)=\alpha(k(p-1)/2)$. 

Finally we turn to case L2, where $G=G_0$. Here we need to show that $(G,H)$ is almost elusive only if it is recorded in case IV of Table \ref{tab:imprimitiveAE}.   
Assume $(G,H)$ is almost elusive. Then as above we have $\alpha(M_0)=\alpha(H_0)$, so $H=C_p{:}C_{d}$ where $d$ is a proper divisor of $(p-1)/2$ and $\alpha(d)=\alpha((p-1)/2)$. The result follows. 
\end{proof}

\begin{rmk}\label{r:L1L2rmk}
As stated in Remark \ref{r:quasirmk}(b), if $(G,H)$ is recorded in Table \ref{tab:imprimitiveAE}, then $(G,K)$ is almost elusive for any subgroup $K<G$ such that $K\cong H$. Here we justify this claim in the cases labeled II, III and IV in Table \ref{tab:imprimitiveAE}.
Take $(G,H)$ to be as in case II or III of Table \ref{tab:imprimitiveAE}. 
That is $G=G_0=\Li_2(p)$ or ${\rm PGL}_2(p)$ with $p=2^m-1$ a Mersenne prime, and $H=C_p{:}C_d$ such that $d$ is a proper divisor of $k(p-1)/2$ and $\alpha(d)=\alpha(k(p-1)/2)$, where $k=|G:G_0|$. 
Then $|G:H|=2^{m-1}k(p-1)/d$. By Corollary \ref{c:L2pOddDer} there are no derangements of order $r$ for any odd prime divisor $r$ of $p-1$. Since $|H|$ is even when $k=2$ and $|H_0|$ is odd, there exists a unique class of involutory derangements in $G$. Thus $(G,H)$ is almost elusive. 
A very similar argument applies in case IV. Here $G=\Li_2(p)$ and $H=C_p{:}C_d$, where $p=2.3^a-1\geqs 17$ is a prime and $d$ is a proper divisor of $(p-1)/2$ with $\alpha(d)=\alpha((p-1)/2)$. 
Then $|G:H|=3^a(p-1)/d$. By Corollary \ref{c:L2pOddDer} there are no derangements of order $r$ for any odd prime divisor $r$ of $p-1$. Since $3\not\in \alpha(H)$ and there exists a unique class of elements of order 3 in $G$ (see \cite[Proposition 3.2.1]{BG_book}), we conclude that $(G,H)$ is almost elusive.  
\end{rmk}

The case L3 in Table \ref{tab:fullastab1} comes with a variety of number theoretic conditions. We are in fact able to show that all these number theoretic conditions hold only in 2 cases. 
\begin{lem}\label{l:L3}
Assume $(G,M)$ is as in case L3 of Table \ref{tab:fullastab1}. Then $q=9$ or $49$.
\end{lem}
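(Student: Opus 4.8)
The plan is to unpack the arithmetic conditions attached to case L3 in Table~\ref{tab:fullastab1} and show that, taken together, they have exactly the two solutions $q=9$ and $q=49$. In this case $G_0=\Li_2(q)$ with $q=p^f$, and the row records (besides the structure of $M$) a list of number-theoretic constraints: essentially that $|\Omega|=|G:M|$ introduces at most one prime $r$ not dividing $|M_0|$, which forces $r$ to be the \emph{unique} primitive prime divisor of $q^n-1$ for the relevant exponent $n$, together with a further constraint requiring some explicit expression in $q$ — one of $q\pm1$, or $(q\pm1)/(2,q-1)$ — to be a prime power. I would handle the divisibility constraint and the prime-power constraint in tandem.

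First I would apply Zsigmondy's theorem (Theorem~\ref{t:zsig}) and Lemma~\ref{l:Lemma A.1}: the latter gives $nf\mid r-1$ and identifies exactly which powers $q^m-1$ are divisible by $r$, so that the requirement ``every prime divisor of $|\Omega|$ other than $r$ already divides $|M_0|$'' becomes a sharp restriction on $q$. Next I would isolate the prime-power condition, which reduces to a Diophantine equation of the shape $q\pm1=c\,s^b$ with $c\in\{1,2\}$, $s$ prime and $b\geqs1$ (and $q=p^f$). The case $b=1$ is elementary once combined with the divisibility data; for $b\geqs2$ I would invoke Lemma~\ref{l:btv} to dispose of equations of the form $r^v+1=s^w$, and Proposition~\ref{p:blnag}, after a substitution such as $x=\pm q$, to eliminate the remaining possibilities with $b\geqs3$. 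This leaves a finite list of candidate $q$, which I would then check directly (or by the computational method of Proposition~\ref{p:smalldims}), arriving at $q\in\{9,49\}$.

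The main obstacle is the Diophantine bookkeeping rather than any single hard computation: one must split according to whether the extra prime $r$ lies on the $q-1$ side or the $q+1$ side, and according to the parity of $q$, and show in each branch either that the prime-power condition has no solution at all or that it forces $f\geqs2$ followed by a bounded search over small $p$. Since every branch collapses to an equation already settled by Lemma~\ref{l:btv} or Proposition~\ref{p:blnag}, or to a finite check, no new number-theoretic input is required; the work is in organising the case division cleanly so that nothing is missed.
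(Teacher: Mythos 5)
There is a genuine gap. The conditions attached to case L3 (spelled out in Remark \ref{r:rmkfullastab1}(c)) are quite specific: $q=p^f=2r^z-1$ where $r=2^m+1$ is a Fermat prime, $m\geqs 2$ is a $2$-power, $f=2^{m-1}$ and $z\geqs 1$ is arbitrary. Writing $x=p^{f/2}$ (legitimate since $f$ is a power of $2$, so even), the whole lemma reduces to the single Diophantine equation $x^2+1=2r^z$ with $r$ a Fermat prime. Neither of the tools you propose applies to this equation: Lemma \ref{l:btv} treats $r^v+1=s^w$ \emph{without} the factor of $2$ on the right-hand side, and Proposition \ref{p:blnag} treats $x^2+x+1=y^b$, which has the wrong shape entirely. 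You acknowledge that equations $q+1=c\,s^b$ with $c=2$ arise, but you then only cite a lemma that handles $c=1$. Since $z$ is unbounded in the hypotheses, there is also no ``finite check'' or ``bounded search'' available: you must prove an upper bound on $z$ before any enumeration can start, and your argument contains no mechanism for doing so.

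The paper closes exactly this gap by importing an external result of Zhu, Le and Togb\'e \cite{ZLT} on the exponential Diophantine equation $x^2+p^{2m}=2y^n$, whose Lemma 2.6 shows that $x^2+1=2r^z$ has no solutions with $z>2$ when $r$ is a Fermat prime. Only then does the problem collapse to the elementary cases $z\in\{1,2\}$, where a direct computation gives $(x,r,z)=(3,5,1)$ and $(7,5,2)$, i.e.\ $q=9$ and $q=49$. So your assertion that ``no new number-theoretic input is required'' is precisely where the proof fails: a new input is required, and it is the crux of the argument.
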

\begin{proof}
Here $G_0=\Li_2(q)$, $G=G_0.f$ , $M$ is a $P_1$ parabolic subgroup of $G$ and the following conditions hold:
\begin{itemize}
\item[{\rm (a)}] $q=2r^z-1$ with $z\geqs 1$,
\item[{\rm (b)}] $r=2^m+1$ is a Fermat prime with $m\geqs 2$ a 2-power; and 
\item[{\rm (c)}] $q=p^f$ with $p$ a prime and $f=2^{m-1}$.
\end{itemize}

We note first that (a) implies that $p^f+1=2r^z$. Let $x=p^{f/2}$ (noting that $f\geqs 2$ is a power of 2). Then the equation becomes 
\begin{equation}\label{e:zlt}
x^2+1=2r^z.
\end{equation}
By \cite[Lemma 2.6]{ZLT} for $z>2$ there are no solutions to \eqref{e:zlt} such that $r$ is a Fermat prime. Thus we may assume that $z\in\{1,2\}$. From here it is a simple calculation to show that the only solutions are $(x,r,z)=(3,5,1)$ and $(7,5,2)$. That is the only cases that arise for L3 are $q=9$ and $q=49$.
\end{proof}

\begin{prop}
Theorem \ref{t:mainquasi} holds for $(G,M)$ as in case L3 in Table \ref{tab:fullastab1}.
\end{prop}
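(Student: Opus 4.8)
The plan is short, because the substantive work has already been done elsewhere. In case L3 we have $G_0=\Li_2(q)$ with $G=G_0.f$ and $M$ a $P_1$ parabolic subgroup of $G$, subject to conditions (a)--(c) recorded above. The only real step is to invoke Lemma \ref{l:L3}, which resolves exactly these Diophantine constraints: writing $x=p^{f/2}$ turns condition (a) into $x^2+1=2r^z$, and then \cite[Lemma 2.6]{ZLT} together with a short finite check forces $(x,r,z)\in\{(3,5,1),(7,5,2)\}$, so $q\in\{9,49\}$. This reduction is where the only genuine difficulty of case L3 resides, and it is precisely the content of Lemma \ref{l:L3}; after it, there is essentially nothing left to prove.

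Next I would observe that both $\Li_2(9)$ and $\Li_2(49)$ lie in the set $\mathcal{A}$ defined just before Proposition \ref{p:smalldimsquasi} (indeed $\mathcal{A}$ contains $\Li_2(q)$ for every $q\leqs 49$), and hence the corresponding groups $G$ with $G_0\leqs G\leqs \Aut(G_0)$ are covered by the computational verification of Theorem \ref{t:mainquasi} carried out in Proposition \ref{p:smalldimsquasi}. That proposition already handles all pairs $(G,H)$ with $H<M$, $(G,M)$ as in Table \ref{tab:fullastab1} or \ref{tab:fullastab2} and $G_0\in\mathcal{A}$, by recursively enumerating the core-free subgroups $L$ of a given maximal overgroup satisfying $|L|\,|G_0|/|L\cap G_0|=|G|$ and testing almost elusivity class-by-class in {\sc Magma}, descending through chains of subgroups until no almost elusive examples remain. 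Thus the proposition follows immediately: the almost elusive $(G,H)$ arising when $(G,M)$ is as in case L3 are exactly those recorded in Theorem \ref{t:mainquasi}(iii) and Table \ref{tab:QuasiInA}, together with the counts of $G$-classes of isomorphic subgroups as described in Remark \ref{r:quasirmk}(c).

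The only place where one might worry is bookkeeping rather than mathematics: one should confirm that the $\Li_2(49)$ examples surfacing here are consistent with the $\Li_2(49).2_3$ entry appearing in Corollary \ref{c:depth}(vi), and that the $\Li_2(9)$ examples are correctly assigned to Table \ref{tab:QuasiInA} (case (iii) of Theorem \ref{t:mainquasi}) rather than Table \ref{tab:imprimitiveAE}, which is reserved for socles $\Li_2(p)$ with $p$ prime. If one preferred to avoid appealing to {\sc Magma} at this point, an entirely hands-on alternative is available --- for $q=9$ one can instead work directly inside $A_6\cong \Li_2(9)$ and its index-$2$ extensions $S_6$, $\PSp$-free $\mathrm{PGL}_2(9)$ and $\mathrm{M}_{10}$, analysing the subgroups of the relevant parabolic $C_9{:}C_{4}$ (respectively $C_9{:}C_8$) using Lemma \ref{l:conjclassinter} and Corollary \ref{c:L2pOddDer} to control derangements of odd prime order, and similarly for $q=49$ --- but since both socles already fall under Proposition \ref{p:smalldimsquasi}, I would simply cite it.
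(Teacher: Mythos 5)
Your proof is correct and follows the paper's own argument exactly: Lemma \ref{l:L3} forces $q\in\{9,49\}$, and since $\Li_2(9)$ and $\Li_2(49)$ both have socle in $\mathcal{A}$, these cases are already covered by the computational treatment in Proposition \ref{p:smalldimsquasi}. (Your optional hands-on aside misdescribes the Borel subgroup of $\Li_2(9)$ as $C_9{:}C_4$ rather than $3^2{:}4$ and contains a garbled phrase, but nothing in the actual argument relies on that aside.)
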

\begin{proof}
By Lemma \ref{l:L3} this case has already been handled in Proposition \ref{p:smalldimsquasi}.
\end{proof}

\begin{prop}
Theorem \ref{t:mainquasi} holds for $(G,M)$ as in case L4 or L5 in Table \ref{tab:fullastab1}.
\end{prop}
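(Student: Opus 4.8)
The plan is to follow the strategy of Proposition~\ref{p:L1L2}, the new feature being that the maximal overgroup $M$ is now a dihedral rather than a Borel subgroup. Recall from Table~\ref{tab:fullastab1} that in cases L4 and L5 we have $G = {\rm PGL}_2(p)$ with socle $G_0 = \Li_2(p)$, and $M = N_G(T)$ is the normaliser of a maximal torus $T$ of $G$ -- the split torus $T \cong C_{p-1}$ when $p = 2^m-1$ is a Mersenne prime, and the non-split torus $T \cong C_{p+1}$ when $p = 2^m+1$ is a Fermat prime -- so that $M$ is dihedral and $p \nmid |M|$. Here $(G,M)$ is almost elusive and the unique class of prime-order derangements consists of the unipotent (that is, order $p$) elements, since $p \nmid |M|$ while every semisimple element of odd prime order and both classes of involutions of $G$ meet $M$. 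Since $G_0 \notin \mathcal{A}$ we have $p > 49$; the remaining (small) primes are covered by Proposition~\ref{p:smalldimsquasi}.

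First I would note that $\pi(G_0) = \pi(M_0) + 1$, the extra prime being $p$, so Lemma~\ref{l:piMpiH} lets us assume $\pi(H_0) = \pi(M_0)$, equivalently $\alpha(H_0) = \alpha(M_0)$ since $H_0 \leqs M_0$. Next I would run through the subgroups $H$ of the dihedral group $M$ -- each of which is cyclic (contained in $T$) or dihedral -- subject to the two standing constraints: $G = G_0 H$, i.e.\ $H$ is not contained in $G_0$, which since $[M:M_0] = 2$ forces $[H:H_0] = 2$; and $\alpha(H_0) = \alpha(M_0)$. A short divisibility argument then pins down the possibilities for $H$ up to conjugacy and, crucially, records which involutions of $M$ lie in $H$ and whether or not they lie in $G_0$.

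The decisive step is to decide, for each admissible $H$, whether $(G,H)$ is almost elusive. As $p \nmid |H|$ the unipotent elements remain a single class of derangements, and by Corollary~\ref{c:L2pOddDer} there are no derangements of odd prime order $r \ne p$; hence $(G,H)$ is almost elusive precisely when \emph{both} ${\rm PGL}_2(p)$-classes of involutions meet $H$. This I would verify using the description of the two involution classes $t_1$, $t_1'$ of ${\rm PGL}_2(q)$ recalled in Section~\ref{ss:quasiprelim}; note that which of them is the class inside $G_0$ is governed by whether $p \equiv 1$ or $3 \imod 4$, which is exactly what distinguishes the Mersenne and Fermat cases. Carrying this out, I expect the cyclic candidates $H \leqs T$ to fail -- they contain only the unique involution of $T$, hence meet only one of the two classes -- while the proper dihedral candidates not contained in $G_0$ meet both classes and so are almost elusive; these are precisely the entries recorded in Table~\ref{tab:imprimitiveAE}, and together with Proposition~\ref{p:smalldimsquasi} this completes the proof.

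The main obstacle is the bookkeeping in the decisive step: for each admissible $H \leqs M$ one must pin down exactly which involutions of $H$ lie in $G_0$, and hence which of the two ${\rm PGL}_2(p)$-classes of involutions is represented in $H$. This hinges on how the index-$2$ inclusion $M_0 < M$ sits inside the dihedral structure of $M$ -- whether $M_0$ contains the unique involution of the rotation subgroup $T$, and which of the reflection classes of $M_0$ fuses to the inner, respectively outer, involution class of $G$ -- and it is here that the two cases genuinely diverge via the congruence $p \equiv \pm 1 \imod 4$.
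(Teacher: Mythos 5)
Your proposal is correct and follows essentially the same route as the paper: reduce via Lemma \ref{l:piMpiH} to $\alpha(H_0)=\alpha(M_0)$, run through the cyclic and dihedral subgroups of $M$, and use the two ${\rm PGL}_2(p)$-classes of involutions (inside/outside $G_0$) to see that only the even dihedral subgroups $D_{2d}$ with $\alpha(d)=\alpha(p+\epsilon)$ survive, exactly case I of Table \ref{tab:imprimitiveAE}. The only cosmetic difference is organisational: the paper's proposition proves just the necessity direction (discarding the odd-$d$ candidates by comparing the single $H$-class of involutions with the two $G$-classes) and defers the sufficiency check via Corollary \ref{c:L2pOddDer} to Remark \ref{r:L4L5rmk}, whereas you fold both directions into one argument.
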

\begin{proof}
Here $G={\rm PGL}_2(p)$, where $p=2^m+\epsilon$ is a prime and $M=D_{2(p+\epsilon)}$ with $\epsilon=\pm 1$. We note that $p+\epsilon \equiv 2 \imod 4$ and we set $M_0=M\cap G_0=D_{p+\epsilon}$. Additionally we note that $\alpha(G)\setminus\alpha(M)=\{p\}$. Therefore every element of order $p$ in $G$ is a derangement and there is a unique $G$-class of such elements since $G={\rm PGL}_2(p)$ (see \cite[Proposition 3.2.6]{BG_book}).  
We need to show that $(G,H)$ is almost elusive only if it is recorded in case I of Table \ref{tab:imprimitiveAE}.

Assume that $(G,H)$ is almost elusive. By Lemma \ref{l:piMpiH} we must have $\pi(H_0)=\pi(M_0)$, which is equivalent to $\alpha(H_0)=\alpha(M_0)$ since $H_0\leqs M_0$. We remind the reader that every subgroup of a dihedral group is either cyclic or dihedral. Therefore, $H=D_{2d}$ and $d$ is a proper divisor of $p+\epsilon$ with $\alpha(d)=\alpha(k(p+\epsilon)/2)$, where we set $k=2$ if $d$ is even and 1 otherwise (note that $H_0=D_{2d/k}$). If $d$ is odd, then $H$ has a unique conjugacy class of involutions, but we recall that there are two such classes in $G$, so $d$ must be even. That is $\alpha(d)=\alpha(p+\epsilon)$ and the result follows. 
\end{proof}

\begin{rmk}\label{r:L4L5rmk}
Here we justify Remark \ref{r:quasirmk}(b) when $(G,H)$ is given as in case I of Table \ref{tab:imprimitiveAE}. Here $G={\rm PGL}_2(p)$ and $H=D_{2d}$, where $p=2^m+\epsilon$ is a prime and $d$ is a proper divisor of $p+\epsilon$ with $\alpha(d)=\alpha(p+\epsilon)$. Note that $|G:H|=2^{m-1}p(p+\epsilon)/d$. By Corollary \ref{c:L2pOddDer} there are no derangements of order $r$ for any odd prime divisor $r$ of $p+\epsilon$. Since $d$ is even and $H_0=D_d$, we see that $H\setminus H_0$ contains involutions and so every involution in $G$ has a fixed point. Finally, since $p\not\in\alpha(H)$ and $G$ contains a unique conjugcay class of elements of order $p$ (see \cite[Proposition 3.2.6]{BG_book}), we conclude that $(G,H)$ is almost elusive. 
\end{rmk}

\begin{prop}
Theorem \ref{t:mainquasi} holds for $(G,M)$ as in case A1, A4 or A5 of Table \ref{tab:fullastab1}.
\end{prop}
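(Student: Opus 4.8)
Here $G_0 = A_n$ and, reading off the relevant maximal subgroup $M$ from Table \ref{tab:fullastab1}, $(G,M)$ is a primitive almost elusive action of $A_n$ or $S_n$ on $n$ points in which the unique class of derangements of prime order consists of the fixed-point-free elements, of some order $p$ (the three labels A1, A4, A5 being distinguished by whether $G=S_n$ or $A_n$ and by whether $n$ is a prime or a proper prime power, these being exactly the arithmetic conditions making the primitive action almost elusive). Since $G_0\notin\mathcal{A}$ we may assume $n>20$, and $M_0 = M\cap G_0$ is a point stabiliser, a copy of $A_{n-1}$.

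The plan is as follows. Suppose $H<M$ is core-free and non-maximal with $G=G_0H$, and assume for contradiction that $(G,H)$ is almost elusive. Since $H<M$, every derangement of $(G,M)$ is a derangement of $(G,H)$, so the order-$p$ class above is a class of prime-order derangements of $(G,H)$; as $(G,H)$ is almost elusive, it is the only one. In particular $(G,H)$ has no derangement of order $\ell$ for any prime $\ell\ne p$, so by Lemma \ref{l:classes} we have $a_\ell\le b_\ell$ for every such $\ell$, where $a_\ell$ (resp. $b_\ell$) is the number of $G_0$-classes (resp. $H_0$-classes) of elements of order $\ell$. Likewise, every prime $r$ dividing $|G_0|$ but not $|H_0|$ would give a derangement of order $r$, so (using $H_0\le M_0$ and Lemma \ref{l:piMpiH}, since the table entries have $\pi(G_0)=\pi(M_0)$) at most one prime divisor of $|A_{n-1}|$ can fail to divide $|H_0|$.

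The main step is to deduce from $a_\ell\le b_\ell$ (all primes $\ell\ne p$) that $H_0$ is far too large to be a proper subgroup of $A_{n-1}$. First I would fix a prime $\ell$ with $\tfrac12(n-1)<\ell\le n-1$ and $\ell\ne p$: then an order-$\ell$ element of $A_{n-1}$ is an $\ell$-cycle and $A_n$ has at least one class of $\ell$-cycles, so $a_\ell\ge 1$, hence $b_\ell\ge 1$ and $H_0$ contains an $\ell$-cycle. By (an iterate of) Bertrand's postulate the interval $(\tfrac12(n-1),n-1]$ contains at least two primes once $n>20$, and at most one of them can be excluded, so $H_0$ contains $\ell_1$- and $\ell_2$-cycles for two such primes $\ell_1<\ell_2$. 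Since $\ell_1+\ell_2>n-1$ their supports meet, so the group they generate is transitive, and hence (containing a cycle of prime length exceeding half its degree) primitive, on a subset $\Gamma$ with $\ell_2\le|\Gamma|\le n-1$; the classification of primitive permutation groups containing a cycle (Jordan, Marggraf; see the work of Jones) then forces $\mathrm{Alt}(\Gamma)\le H_0$, and using $a_\ell\le b_\ell$ for a few small primes $\ell$ to extend the support to all of $\{1,\dots,n-1\}$ gives $H_0\ge A_{n-1}=M_0$. But $H_0=M_0$ is incompatible with $H<M$ and $G=G_0H$: when $M=S_{n-1}$ it would force $H=A_{n-1}$, whence $G_0H=A_n\ne G$, and when $M=A_{n-1}$ it forces $H=M$, not non-maximal. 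This contradiction proves the proposition for A1, A4 and A5.

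The hard part will be making the two non-elementary inputs precise and uniform. I need two primes in the short interval $(\tfrac12(n-1),n-1]$ for all $n>20$ (true, but to be quoted from an explicit estimate), and I must set up the degree, prime-length and fixed-point hypotheses of the Jordan--Marggraf--Jones classification so that they genuinely apply, and so that the short list of exceptional primitive groups there is excluded — which it is, since those groups have too few classes of small-order elements to satisfy $a_\ell\le b_\ell$. I expect that a handful of the smallest values of $n$ in each family may need to be disposed of by direct computation, exactly as in Proposition \ref{p:smalldimsquasi}.
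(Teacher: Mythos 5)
Your opening moves are sound: assuming $(G,H)$ almost elusive, every prime $\ell\ne r$ dividing $|G_0|$ must divide $|H_0|$, and a class-counting argument (Lemma \ref{l:classes}, after checking $\ell$ divides $|\Omega|$) does put $\ell$-cycles into $H_0$ for the two primes in $(\tfrac12(n-1),n-1]$, after which a Jordan--Marggraf--Jones argument forces $\mathrm{Alt}(\Gamma)\le H_0$ for some $\Gamma$ with $|\Gamma|>\tfrac12(n-1)$. (Two small quibbles: in cases A4, A5 and A1 with $a\geqs 2$ one has $\pi(G_0)=\pi(M_0)$, so Lemma \ref{l:piMpiH} does not apply as stated -- you only need the elementary fact that almost elusivity forces $\pi(G_0)-\pi(H_0)\leqs 1$; and the exceptional groups in the $1$- and $2$-fixed-point cases of the cycle classification do need to be excluded explicitly, which drags in the classification of $2$-transitive groups where the paper needs nothing of the sort.)

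The genuine gap is your final step: ``using $a_\ell\le b_\ell$ for a few small primes $\ell$ to extend the support to all of $\{1,\dots,n-1\}$.'' This is not a soft statement, and it is exactly where all the substance of the paper's proof lives. Ruling out $1\leqs n-1-|\Gamma|\leqs 3$ is delicate: the paper (after reducing to $H$ maximal in $M$ and using $\pi(H_0)=\pi(M_0)$ with \cite[Corollary 5]{LPS} to force $H=(S_k\times S_{n-1-k})\cap M$) exhibits a second class of prime-order derangements by explicit cycle shapes, and for $k\leqs 3$ this hinges on \cite[Proposition 3.6]{BHall} and the Catalan-type Lemma \ref{l:btv}: one must dispose of the configurations where $n-1$ is a $2$-power (possible precisely when $n$ is a Fermat prime, whence $G=S_n$) and where $n-1$ and $n-2$ are both $\{2,3\}$-numbers. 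Your sketch never engages with these arithmetic cases, and the class-counting you propose is not routine there: for example, when $n\equiv 1\pmod 4$ and $\Gamma$ omits a single point, detecting a derangement requires comparing involution class numbers in $A_n$ with those in every possible $H_0$ between $\mathrm{Alt}(\Gamma)$ and $(\mathrm{Sym}(\Gamma)\times\mathrm{Sym}(\Delta))\cap A_{n-1}$, and the outcome depends on the congruence class of $n$; you give no argument that the inequality fails in all such cases. As written, the proof establishes $\mathrm{Alt}(\Gamma)\le H_0$ for a large $\Gamma$ and then asserts the conclusion; until the short-complement cases are treated (in effect, redoing the paper's $k\le 3$ analysis, number theory included), the proposition is not proved. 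It is also worth noting that the paper's route -- reduce to $H$ maximal in $M$, force intransitivity, and write down derangements -- avoids both the cycle classification and the class-counting entirely, so your heavier machinery buys nothing here.
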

\begin{proof}
Here $(G,M)=(A_n,A_{n-1})$ or $(S_n,S_{n-1})$, where $n=r^a$ with $r$ a prime or $n=2r^a$ with $r\geqs 3$ a prime (see Table \ref{tab:fullastab1}). Since $(G,M)$ is almost elusive with a unique class of derangements of order $r$ and $H<M$ we conclude that $(G,H)$ contains derangements of order $r$.  Note that by Proposition \ref{p:smalldimsquasi} we may assume that $n>20$.
To begin the analysis, we will assume that $H$ is a maximal subgroup of $M$ (if $(G,H)$ is not almost elusive when $H$ is maximal in $M$ then we are done), where we view $M$ as the stabiliser in $G$ of $n\in\{1,\dots,n\}$. 
Therefore $H$ either acts intransitively, imprimitively or primitively on $\{1,\dots,n-1\}$.
 By Lemma \ref{l:piMpiH} we may assume that $\pi(M_0)=\pi(H_0)$ and so by \cite[Corollary 5]{LPS}, $H$ must act intransitively on $\{1,\dots,n-1\}$. That is $H=(S_k\times S_{n-1-k})\cap M$ for some $1\leqs k< (n-1)/2$. 
In particular, $H$ is the stabiliser in $G$ of the partition $$\{1,\dots, k\}\cup\{k+1,\dots,n-1\}\cup \{n\}.$$ We will show that in each case $(G,H)$ also contains derangements of prime order $s$ such that $s\neq r$ and so $(G,H)$ is not almost elusive. 

Assume first that $k\geqs 4$. The result follows from inspection of the proof of \cite[Lemma 3.8]{BHall}. However we provide the details for completeness. 
First observe that $|G:H|=n\binom{n-1}{k}$. By \cite[Proposition 3.6]{BHall}, $\binom{n-1}{k}$ is divisible by a prime $s$ such that $s>k$ and $s$ does not divide $n$. Thus $s\neq r$ and $s$ divides $n-1-t$ for some $t\in\{0,1,\dots,k-1\}$. Consider an element $g\in G$ with cycle shape $[s^{(n-1-t)/s},1^{t+1}]$. Since $t<k$ it follows that $g\not\in H$, so $g$ is a derangement. 

Next let us assume that $k=1$ or 2. Suppose $s$ is an odd prime divisor of $n-1$. Then every element in $G$ with cycle shape $[s^{(n-1)/s},1]$ is a derangement. Thus we may assume that $n-1=2^t$ for some $t\geqs 5$ (recall we are assuming that $n>20$). This implies $n=r$ is a Fermat prime by Lemma \ref{l:btv}, in which case $G=S_n$ and $M=S_{n-1}$ (see Table \ref{tab:fullastab1}). Suppose first $k=1$. Then any element in $G$ with cycle shape $[2^{(n-1)/2},1]$ is a derangement. Finally assume $k=2$ and take $s$ to be an odd prime divisor of $n-2$. Then $s\neq r$ and any element in $G$ with cycle shape $[s^{(n-2)/s},1^2]$ is a derangement.

Finally we assume $k=3$. Suppose there exist prime divisors $s_1$ and $s_2$ of $n-1$ and $n-2$ respectively, such that $s_1,s_2\geqs 5$. Then any element in $G$ with cycle shape $[s_1^{(n-1)/s_1},1]$ or $[s_2^{(n-2)/s_2},1^2]$ is a derangement. Thus $G$ is almost elusive only if $n-1$ and $n-2$ are both indivisible by primes larger than 3. Let us assume that both $n-1$ and $n-2$ are only divisible by the primes 2 and 3. Then either $(n-1,n-2)=(2^m,3^b)$ or $(3^b,2^m)$ for some $b\geqs 3$ and $m\geqs 5$ (recall $n>20$). 
Thus by Lemma \ref{l:btv} this never occurs. The result follows.
\end{proof}

\begin{prop}
Theorem \ref{t:mainquasi} holds for $(G,M)$ as in case A2 or A3 of Table \ref{tab:fullastab1}.
\end{prop}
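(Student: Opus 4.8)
The plan is to follow the template used above for cases A1, A4 and A5. First I would read off the pair $(G,M)$ from case A2 or A3 of Table~\ref{tab:fullastab1}: in each case $G$ is almost simple with socle $G_0=A_n$ and $M$ is the maximal subgroup recorded there, so that $(G,M)$ is almost elusive with a unique class of derangements of some prime order $r$. By Proposition~\ref{p:smalldimsquasi} we may assume $n>20$, since every $A_n$ with $n\leqs 20$ lies in $\mathcal{A}$. Since $H<M$, every derangement of order $r$ for $(G,M)$ is also one for $(G,H)$, so to prove the proposition it suffices to exhibit, in the action of $G$ on the cosets of $H$, a derangement of some prime order $s\neq r$.

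As in the previous proofs I would first reduce to the case where $H$ is a maximal subgroup of $M$: if no maximal subgroup of $M$ gives an almost elusive pair, then neither does any proper subgroup of one. Lemma~\ref{l:piMpiH} then lets us assume $\pi(M_0)=\pi(H_0)$, and combining this with \cite[Corollary~5]{LPS} and the subgroup structure of $M$ confines $H$ to a short, explicit list; in practice $H$ is forced to be an intransitive-type subgroup of $M$, the stabiliser of a finer partition of $\{1,\dots,n\}$ with smallest nontrivial part of some size $k$. For such $H$ the index $|G:H|$ involves, as a factor, a binomial coefficient of the form $\binom{n-1}{k}$ (or $\binom{n-2}{k}$), and \cite[Proposition~3.6]{BHall} then yields a prime $s>k$ with $s\nmid n$ dividing it. Hence $s$ divides $n-1-t$ for some $t$ with $0\leqs t\leqs k-1$, and an element of $G$ with cycle shape $[s^{(n-1-t)/s},1^{t+1}]$ has order $s\neq r$, has at most $k$ fixed points, and — since $s>k$ — stabilises no part of the defining partition, so is a derangement.

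It remains to treat the few small values of $k$ (say $k\leqs 3$) directly, exactly as in the previous proof: here one instead seeks an odd prime divisor $s\geqs 5$ of some $m\in\{n-1,n-2\}$ and takes an element with cycle shape $[s^{m/s},1^{n-m}]$. This construction fails only if $n-1$ and $n-2$ are both products of the primes $2$ and $3$ alone, which for $n>20$ would force an equation $2^{a}-3^{b}=\pm1$ with $a,b$ large, and this is excluded by Lemma~\ref{l:btv}; the prime $2$ needs a little extra care when $G_0=A_n$, since the relevant involution classes can split differently in $A_n$ and $S_n$. I expect the main obstacle to be precisely this number-theoretic bookkeeping: the generic cases are dispatched uniformly via \cite[Proposition~3.6]{BHall}, but the boundary cases, in which $|G:H|$ or the integers $n\pm1$, $n\pm2$ happen to be prime powers (especially powers of $2$ or $3$), require the Catalan-type input of Lemma~\ref{l:btv} together with a careful parity analysis.
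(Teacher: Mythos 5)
Your overall strategy coincides with the paper's (reduce to $n>20$ via Proposition \ref{p:smalldimsquasi}, assume $H$ maximal in $M$, use Lemma \ref{l:piMpiH} to force $\pi(H_0)=\pi(M_0)$, then exhibit a second class of prime order derangements via cycle shapes, with \cite[Proposition 3.6]{BHall} for $k\geqs 4$ and Lemma \ref{l:btv} for the $\{2,3\}$-smooth boundary cases). However, there is a genuine gap in your case analysis: in cases A2 and A3 we have $G=S_n$ and $M=S_{n-2}\times S_2$, and the maximal subgroups of a direct product are \emph{not} all of the form ``stabiliser of a finer partition''. By \cite[Lemma 1.3]{MSDP} they are $(A_{n-2}\times 1).2$, a diagonally embedded copy of $S_{n-2}$ (the graph of a surjection $S_{n-2}\to S_2$), and $L\times S_2$ with $L$ maximal in $S_{n-2}$. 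The first two, together with $A_{n-2}\times S_2$ (arising from $L=A_{n-2}$, which survives the prime-divisor test since $\pi(A_{n-2})=\pi(S_{n-2})$ here), are exactly the subgroups your sketch skips when you assert that $H$ ``is forced to be an intransitive-type subgroup of $M$''; Lemma \ref{l:piMpiH} and \cite[Corollary 5]{LPS} do not eliminate them (the paper in fact uses \cite[Theorem 4]{LPS} at this point, applied to $L\leqs S_{n-2}$, not Corollary 5), and the binomial-coefficient derangement argument does not apply to them.

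These residual cases need separate, specific arguments, which is where the paper's proof differs from the A1/A4/A5 template: $(A_{n-2}\times 1).2$ is excluded not by producing a derangement but because it lies in $A_n=G_0$, contradicting quasiprimitivity $G=G_0H$; for the diagonal $S_{n-2}$ and for $A_{n-2}\times S_2$ one produces involutory derangements of shape $[2^{n/2}]$ (case A2) or $[2^{(n-1)/2},1]$ (case A3), using a parity computation: e.g.\ a fixed-point-free involution in $A_{n-2}\times S_2$ would require $(n-2)/2$ transpositions lying in $A_{n-2}$, but $(n-2)/2$ is odd when $n=2^m$. Your closing remark that ``the prime $2$ needs a little extra care'' gestures at parity but misdiagnoses it (there is no $A_n$ versus $S_n$ class-splitting issue here, since $G=S_n$ throughout), and it does not supply these arguments. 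Also note the small-$k$ bookkeeping concerns $n-2$ and $n-2-t$ rather than $n-1$ (the pair $\{n-1,n\}$ is a block of the partition), though the Lemma \ref{l:btv} input you identify is indeed the right number-theoretic tool; for $k=3$ in case A2 the paper again uses the fixed-point-free involution $[2^{n/2}]$, exploiting that $S_3\times S_{n-5}\times S_2$ contains no fixed-point-free involutions.
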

\begin{proof}
In cases A2 and A3 of Table \ref{tab:fullastab1} $G=S_n$, $M=S_{n-2}\times S_2$ and either $n=2^m$ and $n-1=r$ is a Mersenne prime (case A2) or $n=2^m+1=r$ is a Fermat prime (case A3). 
Since $(G,M)$ is almost elusive with a unique class of derangements of order $r$ and $H<M$ we conclude that $(G,H)$ contains derangements of order $r$. 
We begin by assuming that $H$ is maximal in $M$ (if $(G,H)$ is not almost elusive when $H$ is maximal in $M$ then we are done). The maximal subgroups of $M$ are as follows
\begin{itemize}
\item[{\rm (i)}] $(A_{n-2}\times 1).2$
\item[{\rm (ii)}] $S_{n-2}$
\item[{\rm (iii)}] $L\times S_2$ where $L$ is a maximal subgroup of $S_{n-2}$.
\end{itemize}
See \cite[Lemma 1.3]{MSDP}.
First suppose that $H$ is as in case (i). Then up to conjugacy $H=\langle A_{n-2}, (n-3, n-2)(n-1, n) \rangle\leqs G_0$. This is a contradiction since $G=G_0H$.

Now suppose $H$ is as in case (ii).
Here $H=S_{n-2}$ and every element of $H$ has cycle shape $[a,1^2]$, where $a$ is a partition of $n-2$. If $n=2^m$, then any involution in $G$ with cycle shape $[2^{n/2}]$ is a derangement. Similarly, if $n=2^m+1$ is a Fermat prime, then any involution in $G$ with cycle shape $[2^{(n-1)/2},1]$ is a derangement. Thus $G$ contains derangements of order $r$ and of order $2$, so $(G,H)$ is not almost elusive.

Finally suppose $H$ is as in case (iii).
By Lemma \ref{l:piMpiH} we may assume $\pi(M_0)=\pi(H_0)$, so $|H_0|$ is divisible by every prime $p\leqs n-2$. Thus $|L|$ must be divisible by the two largest primes not exceeding $n-2$. Therefore by \cite[Theorem 4]{LPS} either $L=A_{n-2}$ or $L=S_k\times S_{n-2-k}$ for some $1\leqs k< (n-2)/2$. We recall that $G$ contains a conjugacy class of derangements of order $r$  (where $r=n-1$ when $n=2^m$, and $r=n$ when $n$ is a Fermat prime). We show that in each of these cases $(G,H)$ also contains derangements of a distinct prime order and so $(G,H)$ is not almost elusive. 

Suppose first that $H=A_{n-2}\times S_2$. If $n=2^m$, then any involution in $G$ with cycle shape $[2^{n/2}]$ is a derangement (since $n/2$ is even). Similarly, if $n=2^m+1$ is a Fermat prime, then any involution in $G$ with cycle shape $[2^{(n-1)/2},1]$ is a derangement (since $(n-1)/2$ is even).

For the remainder of the proof we may assume $H=S_k\times S_{n-2-k}\times S_2$ for some $1\leqs k\leqs (n-2)/2$. Up to conjugacy $H$ is the stabiliser of the partition $$\{1,\dots,k\}\cup\{k+1,\dots,n-2\}\cup\{n-1,n\}.$$ 
Suppose $k\geqs 4$. By \cite[Proposition 3.6]{BHall} there exists a prime $s>k$ that divides $\binom{n-2}{k}$, which means that $s\neq r$ and $s$ divides $n-2-t$ for some $t\in\{0,1,\dots,k-1\}$. Thus any element in $G$ with cycle shape $[s^{(n-2-t)/s}, 1^{t+2}]$ is a derangement. 
Now suppose $k=1$ or 2. Let $s$ be an odd prime divisor of $n-2$. Then any element in $G$ with cycle shape $[s^{(n-2)/s},1^2]$ is a derangement. 
Finally suppose $k=3$. Assume first that $n=2^m$ and $n-1$ is a Mersenne prime. Then any element with cycle shape $[2^{n/2}]$ is a derangement. Now assume $n=r=2^m+1$ is a Fermat prime. Then $n-2=2^{m}-1$ is odd and by Lemma \ref{l:btv} there exists a prime $s\geqs 5$ such that $s$ divides $n-2$ (recall $n>20$). Thus any element in $G$ with cycle shape $[s^{(n-2)/s},1^2]$ is a derangement. The result follows. 
\end{proof}

This completes the proof of Theorem \ref{t:mainquasi} and the classification of the quasiprimitive almost elusive groups.

\section{Proof of Corollaries \ref{c:largestprim} and \ref{c:depth}}\label{s:corollaries}
In this section we provide proofs of Corollaries \ref{c:largestprim} and \ref{c:depth}.  
 
\subsection{Proof of Corollary \ref{c:largestprim}}
Let $G\leqs{\rm Sym}(\Omega)$ be a quasiprimitive almost elusive permutation group with socle $G_0$ and point stabiliser $H$. The result for the affine groups is trivial. Thus in view of Theorems \ref{t:fullaeprim} and \ref{t:mainquasi} we may assume that $G$ is an almost simple group such that $(G,H)$ is one of the cases recorded in Tables \ref{tab:fullastab1}, \ref{tab:fullastab2}, \ref{tab:imprimitiveAE} or \ref{tab:QuasiInA}. The cases in Table \ref{tab:fullastab2} and \ref{tab:QuasiInA} can be handled easily using computational methods in {\sc Magma} to calculate the degree of $G$. Thus it remains to handle the cases in Tables \ref{tab:fullastab1} and \ref{tab:imprimitiveAE}. Let $s$ be the order of the elements in the unique $G$-class of derangements of prime order. 

Suppose $(G,H)$ is recorded in Table \ref{tab:fullastab1}. The proof is similar in all cases so we will only show the details for cases U1, L4 and A2 (we note we use \cite[Chapters 3 and 4]{KL} for the orders of the classical groups). 

First assume $(G,H)$ is as in case U1 in Table \ref{tab:fullastab1}. Here $G=\Un_n(q).[2f]$ such that $q=2^f$ is even, $n\geqs 5$ is a prime divisor of $q+1$ and $s=2nf+1$ is the unique primitive prime divisor of $q^{2n}-1$. Additionally, $H$ is the stabiliser of a 1-dimensional non-degenerate subspace of the natural module. By \cite[Proposition 4.1.4]{KL} we have
$$|G_0|=\frac{1}{n}q^{n(n-1)/2}\prod_{i=2}^n(q^i-(-1)^i) \mbox{ and } |H_0|=\frac{1}{n}q^{(n-1)(n-2)/2}\prod_{i=1}^{n-1}(q^i-(-1)^i).$$
Since $s$ is the unique primitive prime divisor of $q^{2n}-1$, by \cite[Remark 2.10]{Hall} we deduce that
$$|\Omega|=q^{n-1}\frac{q^{n}+1}{q+1}=q^{n-1}.n.s^l$$
for some $l\geqs 1$. Thus the result follows since $s=2nf+1> n$. 

Next assume that $(G,H)$ is as in case L4 of Table \ref{tab:fullastab1}. Then $G={\rm PGL}_2(p)$ and $H=D_{2(p-1)}$ where $s=p=2^m-1$ is a Mersenne prime. Here $|G|=2^{m}p(p-1)$ and $|H|=2(p-1)$. Thus $|\Omega|= 2^{m-1}p$. The result follows since $p\geqs 7$. Finally let us assume that $(G,H)$ is as in case A2 of Table \ref{tab:fullastab1}. Then $G=S_n$ and $H=S_{n-2}\times S_2$ with $n=2^m=s+1$. Thus $|\Omega|= (n(n-1))/2=2^{m-1}s$. The result follows since $s>2$.

Finally suppose that $(G,H)$ is recorded in Table \ref{tab:imprimitiveAE}. Assume first that $(G,H)$ is as in case II or III. Then $G=\Li_2(p)$ or ${\rm PGL}_2(p)$ with $p=2^m-1$ a Mersenne prime and $H=C_p{:}C_d$, where $d$ is a proper divisor of $k(p-1)/2$ and $\alpha(d)=\alpha(k(p-1)/2)$ with $k=|G:G_0|$. Here $s=2$ and $|\Omega|=2^{m-1}k(p-1)/d$. Thus $2$ is not the largest prime divisor of $|\Omega|$ since $(p-1)/d$ is divisible by an odd prime. Next assume that $(G,H)$ is as in case I. Then $G={\rm PGL}_2(p)$ where $s=p=2^m+\epsilon$ is a prime and $\epsilon=\pm 1$. Additionally, $H=D_{2d}$ where $d$ is a proper divisor of $p+\epsilon$ and $\alpha(d)=\alpha(p+\epsilon)$. Thus $|\Omega|= 2^{m-1}p(p+\epsilon)/d$. Since all prime divisors of $p+\epsilon$ must be less than $p$ we conclude that $p$ is the largest prime divisor of $|\Omega|$. Finally assume that $(G,H)$ is as in case IV. Here $G=\Li_2(p)$ where $p=2.3^a-1$ is a prime such that $a\geqs 2$ and $H=C_p{:}C_d$ such that $d$ is a proper divisor of $(p-1)/2$ and $\alpha(d)=\alpha((p-1)/2)$. Here $s=3$ and $|\Omega|=3^a.(p-1)/d$. Thus it is clear to see that 3 is not the largest prime divisor of $|\Omega|$ if and only if $(p-1)/d$ has an odd prime divisor. 
\qed

\subsection{Proof of Corollary \ref{c:depth}}
Let $G$ be an almost simple group with socle $G_0$ and suppose $D_G\geqs 2$. Let $H$ be a non-maximal core-free subgroup such that $G=G_0H$ and $(G,H)$ is almost elusive. In view of Theorem \ref{t:mainquasi}, we may assume that $G$ is one of the groups recorded in Table \ref{tab:imprimitiveAE} or \ref{tab:QuasiInA}.

First consider the cases in Table \ref{tab:QuasiInA}. In each case we can use {\sc Magma} to compute $D_G$ precisely, applying similar techniques used in the proof of Proposition \ref{p:smalldimsquasi}.
Finally let us assume $G$ is one of the groups in Table \ref{tab:imprimitiveAE}. The analysis of these cases is very similar, so we only provide details when $G={\rm PGL}_2(p)$ and $p=2^m-1$ is a Mersenne prime. In this case $(G,H)$ is almost elusive only if $H=D_{2d}$ or $C_p{:}C_d$, where $d$ is a proper divisor of $p-1$ and $\alpha(d)=\alpha(p-1)$. In both cases, it is easy to see that $$d_G(H)=\omega(p-1)-\omega(d) + 1.$$
 For example, take $H=C_p{:}C_d<M=C_p{:}C_{p-1}$, where $M$ is a Borel subgroup of $G$. We can compute the depth of $H$ by constructing a chain of subgroups starting with $M$ and then repeatedly taking prime index subgroups, where each subgroup in the chain is of the form $C_p{:}C_t$, where $t$ is a proper divisor of $p-1$ and $\alpha(t)=\alpha(p-1)$.
Thus 
$$D_G = \max d_G(H) = d_G(D_{2e})=d_G(C_p{:}C_{e})$$ 
where $e$ is the product of the distinct prime divisors of $p-1$. That is $\omega(e)=\pi(p-1)$, and we conclude that $D_G= \omega(p-1)-\pi(p-1)+1$ as in part (ii) of the corollary.
\qed

\section{Tables}\label{s:tables}
In this final section we present the tables of the quasiprimitive almost elusive groups, which we have referenced throughout this paper. 
We first provide a little information regarding how to read Tables \ref{tab:fullastab1} and \ref{tab:fullastab2}. 

\begin{rmk}
Here $G\leqs{\rm Sym}(\Omega)$ is an almost simple permutation group with socle $G_0$ and point stabiliser $H$.

\begin{itemize}
\item[{\rm (a)}] In both tables we record the \emph{type of} $H$. For the alternating groups this describes the exact structure of $H$. In the case where $G_0$ is a classical group we additionally record the Aschbacher collection containing $H$. For the geometric collections (those labeled $\mathcal{C}_1,\dots , \mathcal{C}_8$) the type gives the approximate structure of $H\cap {\rm PGL}(V)$, where $V$ is the natural module of $G_0$, and for the non-geometric collection (denoted as $\mathcal{S}$) the type of $H$ denotes the socle of $H$. For example, take $G=\Li_n(q)$ and $H$ to be of type ${\rm GL}_1(q)\wr S_n$. Then $H$ is the stabiliser of a direct sum decomposition of $V=V_1\oplus\dots\oplus V_n$ where ${\rm dim}V_i=1$ for all $1\leqs i \leqs n$. In addition, we adopt the standard notation and use $P_i$ to denote a maximal parabolic subgroup, which is the stabiliser in $G$ of an $i$-dimensional totally singular subspace of $V$. In all other cases the type of $H$ describes the structure of $H\cap G_0$.  

\item[{\rm (b)}] The column labeled $G$ records the groups $G$ for which $(G,H)$ is almost elusive. Additionally, the conditions stated in the tables are in addition to any conditions required for simplicity of $G_0$ and maximality of $H$ (see \cite[Section 3.5]{KL}).

\item[{\rm (c)}] {In the final column, we describe the unique conjugacy class of derangements of prime order in $G$. If there is a unique $G$-class of elements of the given prime order in $G$, then we represent this class with the relevant prime. However, if there are multiple classes of the given prime order then we describe the precise class. 
For example, if $G_0=\Un_3(3)$ with $H$ of type $\Li_2(7)$, then $G_0$ contains two $G$-classes of elements of order 3. As shown in the table, the Jordan form on $V$ of the derangements is $[J_2,J_1]$, where $J_i$ denotes a standard unipotent block of size $i$.

Similarly if $G=\Un_4(2).2$ and $H$ is of type ${\rm Sp}_4(2)$, then $G$ has three $G$-classes of elements of order 3, labeled 3A, 3C and 3D with $|3{\rm A}|=80$, $|3{\rm C}|=240$ and $|3{\rm D}|=480$; the derangements are in 3A. Something similar occurs in the cases where $G_0={\rm PSp}_6(2)$ with $H$ of type ${\rm O}^+_6(2)$ and $G_0={}^2F_4(2)'$ with $H$ of type $\Li_2(25)$. 
In case L1 of Table \ref{tab:fullastab1} the class 2A of involutory derangements in $G$ is precisely the unique class of involutions in $G_0$. 

Finally for the alternating and symmetric groups we represent the conjugacy class by the cycle structure of the element. For example, in the case $(G,H)=(A_6,\Li_2(5))$ the elements in the unique class of derangements of prime order have cycle shape $[3,1^3]$.}
\end{itemize}

\end{rmk}

\begin{table}
\renewcommand\thetable{A1}
\[
\begin{array}{clclllc} \hline
\mbox{Case} &G_0 & &\mbox{Type of } H &G & \mbox{Conditions}&  x \\ \hline\rule{0pt}{2.5ex} 
{\rm U1} &\Un_n(q) & \mathcal{C}_1 & {\rm GU}_1(q)\perp {\rm GU}_{n-1}(q) & G_0.J & \text{see Remark \ref{r:rmkfullastab1}(b)} & 2nf+1\\

{\rm L1} & {\rm L}_{2}(q) &\mathcal{C}_1&P_1 & {\rm PGL}_{2}(q), \, G_0  & q = p = 2^m-1 & \texttt{2A} \\
{\rm L2}&                     &                   & & G_0 & q=p, \, p+1 = 2.3^a, \, a \geqs 2 & 3 \\
{\rm L3} &                     &                   && G_0.f &  \mbox{See Remark \ref{r:rmkfullastab1}(c)} & r \\
{\rm L4}&                     & \mathcal{C}_2 &{\rm GL}_{1}(q) \wr S_2 & {\rm PGL}_{2}(q) &  q=p=2^m-1 & p \\ 
{\rm L5} &                     &\mathcal{C}_3&{\rm GL}_{1}(q^2) & {\rm PGL}_{2}(q) & q=p=2^m+1, \, m\geqs 3 & p \\ 
    
{\rm A1} &A_n & & S_{n-1} & S_{n} & n = r^a,\, a\geqs 1& [r^{n/r}] \\
{\rm A2}&      && S_{n-2} \times S_2 & S_n & n = 2^m = r + 1 & [r,1] \\
{\rm A3} &      && & S_n & n = 2^m+1 = r & [r] \\
{\rm A4} &      && A_{n-1} & A_{n} & \mbox{$n = r^a$, $a \geqs 2$} & [r^{n/r}] \\
{\rm A5}&      && & A_n & n=2r^a,\, r \geqs 3 ,\, a\geqs 2 & [r^{n/r}] \\

\hline
\end{array}
\]
\caption{Primitive almost elusive almost simple groups: Part I}
\label{tab:fullastab1}
\end{table}

\begin{table}
\renewcommand\thetable{A2}
\[
\begin{array}{lcllc} \hline
G_0 & &\mbox{Type of } H &G &   x \\ \hline\rule{0pt}{2.5ex} 
\Li_3(4) &\mathcal{C}_1 &   {\rm GL}_1(4)\oplus{\rm GL}_2(4)&G_0.2_1,\, G_0.2_3,\,G_0.2^2&  7\\
            &\mathcal{C}_5 &   {\rm GL}_3(2) &G_0.2_1,\, G_0.2_2,\,G_0.2^2&   5\\
            &\mathcal{S} &  A_6 &   G_0.2_3 &  7\\
\Li_2(8) &\mathcal{C}_1&P_1 & G_0.3, \, G_0 & 3 \\
                     & \mathcal{C}_2 &{\rm GL}_{1}(q) \wr S_2 & G_0.3, \, G_0 &   3 \\
                     &\mathcal{C}_3&{\rm GL}_{1}(q^2) & G_0.3 &   7 \\ 

\Un_6(2) &\mathcal{C}_5 &  {\rm Sp}_6(2) &G_0.2  & 11\\

             &\mathcal{S} & \Un_4(3) & G_0.2  & 11\\

\Un_5(2) &\mathcal{C}_1 &  {\rm GU}_1(2)\perp {\rm GU}_4(2) &G_0.2 &  11 \\
             &\mathcal{C}_2 & {\rm GU}_1(2)\wr S_5 & G_0.2& 11 \\

\Un_4(3) & \mathcal{C}_1 & P_2 & G_0.2_2 &  7 \\

\Un_4(2) &\mathcal{C}_1 &   P_1 & G_0.2,\, G_0&  5\\
             &\mathcal{C}_2 & {\rm GU}_1(2)\wr S_4 &  G_0.2,\, G_0& 5\\
             &\mathcal{C}_5 & {\rm Sp}_4(2) & G_0.2 & \texttt{3A} \\
{\rm U}_{3}(3) & \mathcal{C}_1& P_1 & G_0.2 &  7 \\      
                      & \mathcal{S} & {\rm L}_{2}(7) & G_0.2, \, G_0 &   [J_2,J_1] \\ 
{\rm U}_{3}(4) & \mathcal{C}_1& {\rm GU}_{2}(q) \times {\rm GU}_{1}(q) & G_0.4 &   13 \\
                      & \mathcal{C}_2 & {\rm GU}_{1}(q) \wr S_3 & G_0.4 &  13 \\                             
{\rm U}_{3}(8) & \mathcal{C}_1& {\rm GU}_{2}(q) \times {\rm GU}_{1}(q) & G_0.6 &   19 \\
{\rm PSp}_6(2) &\mathcal{C}_1 &  {\rm Sp}_2(2)\perp{\rm Sp}_4(2) & G_0 &7\\
                       &\mathcal{C}_8 & {\rm O}^{+}_6(2) & G_0   &  \texttt{3B} \\
                       &\mathcal{C}_8 &  {\rm O}^{-}_6(2) & G_0 &  7 \\
                
{\rm PSp}_4(7) &\mathcal{C}_1 & P_2 & G_0.2,\,G_0 & 5\\

{}^2F_4(2)' &&{\rm L}_{2}(25) & G_0.2,\, G_0 &  \texttt{2A} \\
& &5^2{:}4A_4 & G_0.2 & 13\\
G_2(4) & & {\rm J}_2 & G_0.2 & 13 \\
A_{10} & &(S_7 \times S_3) \cap G & G_0&  [5^2] \\
A_9 & & (S_7 \times S_2) \cap G & G_0.2, G_0 &  [3^3] \\
      & & (S_6 \times S_3) \cap G& G_0.2, G_0 &  [7,1^2] \\
A_6 & &S_3 \wr S_2& G_0.2=S_6 &   [5,1] \\
      & &{\rm L}_{2}(5)  &G_0 &  [3,1^3] \\
      & & D_{20} & G_0.2={\rm PGL}_{2}(9)&  3 \\
      & & 5{:}4  & G_0.2={\rm M}_{10}&  3 \\
      & &  3^2{:}Q_8 & G_0.2={\rm M}_{10}&  5 \\
A_5 & &D_{10}& G_0 &  [3,1^2] \\ 
               
\hline
\end{array}
\]
\caption{Primitive almost elusive almost simple groups: Part II}
\label{tab:fullastab2}
\end{table}

\begin{rmk}\label{r:rmkfullastab1}
Here we provide some additional remarks on the cases arising in Tables \ref{tab:fullastab1} and \ref{tab:fullastab2}.
\begin{itemize}
\item[{\rm (a)}] In view of the isomorphisms 
$$\Li_2(4)\cong\Li_2(5)\cong A_5,\, \Li_2(9)\cong A_6,\, \Li_2(7)\cong \Li_3(2),$$
$$\Un_4(2)\cong{\rm PSp}_4(3),\,G_2(2)'\cong \Un_3(3) ,\,{}^2G_2(3)'\cong \Li_2(8),$$ the tables are complete. See \cite[Proposition 2.9.2 and Theorem 5.1.1]{KL} for a complete list of the isomorphisms between the small dimensional groups of Lie type and alternating groups. 

\item[{\rm (b)}] For case U1 in Table \ref{tab:fullastab1} there are additional conditions. Firstly, we note that $G_0=\Un_n(q)$ with number theoretic restrictions, namely $q=2^f$ is even, $n\geqs 5$ is a prime divisor of $q+1$ and $2nf+1$ is the unique primitive prime divisor of $q^{2n}-1$. Additionally, $G=G_0.J$ where $J\leqs{\rm Out}(G_0)=\langle\ddot{\delta}\rangle{:}\langle\ddot{\phi}\rangle$, $J\cap\langle\ddot{\delta}\rangle={1}$ and $J$ projects onto $\langle\ddot{\phi}\rangle$. Here we use $\delta$ and $\phi$ to denote a diagonal automorphism of order $n$ and a field automorphism of order $2f$ respectively in $\Aut(G_0)$ (see \cite[Section 1.7]{BHR}), and following \cite{KL} for any element $x\in\Aut(G_0)$ we use $\ddot{x}$ to denote the coset $G_0x\in {\rm Out}(G_0)=\Aut(G_0)/ G_0$. As explained in \cite[Remark 1]{Hall} we do not anticipate any genuine examples arise in this case. For more information on the details of this case, see \cite[Section 5.2.3]{Hall}. 

\item[{\rm (c)}] Consider case L3 in Table \ref{tab:fullastab1}. There are two groups of the form $G_0.f$, namely $G_0.\langle \delta\phi\rangle$ and $G_0.\langle \phi \rangle$, where $\phi$ is a field automorphism of order $f$ and $\delta$ is a diagonal automorphism of order 2. We additionally require that $q=p^f=2r^a-1$, where $r=2^m+1$ is a Fermat prime, $m\geqs2$ is a 2-power, $a\geqs 1$ and $f=2^{m-1}$. In Lemma \ref{l:L3} it is shown that these number theoretic conditions imply $q=9$ or 49.

\item[{\rm (d)}]For the cases with $G_0=\Li_3(4)$ in Table \ref{tab:fullastab2} the recorded groups in the column labeled $G$ are defined using Atlas notation \cite{WebAt}. That is $G_0.2_1=\Li_3(4).\langle \iota\phi \rangle $, $G_0.2_2=\Li_3(4).\langle \phi \rangle$, $G_0.2_3=\Li_3(4).\langle \iota\rangle$ and $G_0.2^2=\Li_3(4).\langle \iota,\phi \rangle$, where $\iota$ denotes the inverse-transpose graph automorphism and $\phi$ denotes a field automorphism of order $2$. Similarly for the case with $G_0=\Un_4(3)$ in Table \ref{tab:fullastab2}. Here $G_0.2_2=\Un_4(3).\langle \gamma \rangle $, where $\gamma$ is an involutory graph automorphism and  $C_{G_0}(\gamma)={\rm PSp}_4(3)$. Finally in the case where $G={}^2F_4(2)$ and $H$ is of type $\Li_2(25)$, we note that $H=\Li_2(25).2_3$. We use $\Li_2(25).2_3$ to denote $\Li_2(25).\langle\delta\phi\rangle$ where $\delta$ and $\phi$ are standard involutory diagonal and field automorphisms respectively (see \cite[Section 1.7]{BHR} for example).  
\end{itemize}
\end{rmk}

\begin{table}
\renewcommand\thetable{B1}
\[
\begin{array}{clllcc} \hline
\mbox{Case} & G & H &  \mbox{Conditions}& \alpha(d)&x    \\ \hline\rule{0pt}{2.5ex} 
{\rm I} &{\rm PGL}_2(p) & D_{2d} & p=2^m + \epsilon,\,m>2 &\alpha(p+\epsilon)  & p \\
{\rm II} &			& C_p{:}C_{d} & p=2^m-1& \alpha(p-1) & 2\\
{\rm III} &{\rm L}_2(p) 	& C_p{:}C_{d} & p=2^m-1 & \alpha(\frac{p-1}{2})   & 2\\
{\rm IV} &			& C_p{:}C_{d} & p=2.3^a-1, \, a\geqs 2 &\alpha(\frac{p-1}{2})& 3\\
\hline
\end{array}
\]
\caption{Some quasiprimitive almost elusive groups}
\label{tab:imprimitiveAE}
\end{table}

\begin{table}
\renewcommand\thetable{B2}
\[
\begin{array}{lllcccc} \hline\rule{0pt}{2.5ex} 
G & H & M &  a & b &c & x \\ \hline\rule{0pt}{2.5ex} 
 {\rm M}_{10} & 3^2{:}4 & 3^2{:}Q_8 &  2 & 2 & 2& 5\\
 A_9 &  (A_5\times 3){:}2^{\dagger} & (A_6\times 3){:}2 & 2 & 1 & 2 &7\\
 S_9   & S_5\times {S_3}^{\dagger} &  S_6\times S_3 &  2 & 1 & 2&7\\

 \Li_2(8).3 &  S_3\times 3&D_{18}{:}3 &  1 & 1  & 2 & 7 \\

 \Li_2(49).2_3 & 7^2{:}(3\times Q_8) & {7}^2{:}(3\times Q_{16}) &  2 & 2 & 2 &5  \\
			& 7^2{:}{12} &{7}^2{:}(3\times Q_{16}) &  2 & 2 & 3 &5 \\
 \Un_3(3).2 & 3.(S_3\wr 2) &(3^{1+2}{:}8).2 &  1 & 1 & 2&7 \\
			 & 3.S_3^2&(3^{1+2}{:}8).2 &  1 & 1 & 3&7  \\
 			 &  S_3^2 & (3^{1+2}{:}8).2 &  1 & 1 &4 &7 \\
 \Un_4(2)  	& S_3^2{:}S_3 & 3^3.S_4 &  1 & 1 & 2&5 \\
			& 3 \times S_3^2 & 3^3.S_4 &  1 & 1 & 3&5 \\
			&  6 \times S_3& 3^3.S_4,\,\, 2^{1+4}{:}{\rm SU}_2(2){:}3  & 1 & 1 &4 &5 \\
 			& {\rm SL}_2(3){:}A_4&  2^{1+4}{:}{\rm SU}_2(2){:}3 &  1 & 1 &2 &5 \\

\Un_4(2).2 &  S_3\times(S_3\wr 2) & 3^3{:}S_4\times 2  &  1 & 1 &2 &5 \\
	         &  S_3^3 & 3^3{:}S_4\times 2   & 2 & 1 &3 &5 \\
	         &  2 \times S_3^2 &  3^3{:}S_4\times 2   ,\,\, 2^3{:}A_4.D_{12}   & 3 & 1 &4 &5 \\
		   &  2^{1+4}{:}{\rm SU}_2(2){:}3 & 2^3{:}A_4.D_{12} &  1 & 1 & 2 &5 \\
 \Un_5(2).2 & S_3 \times S_6 &  (3\times {\rm SU}_4(2)){:}2  & 1 & 1  & 2 &11 \\
 {\rm PSp}_6(2) &   S_5\times {\rm Sp}_2(2)^{\dagger} & S_6\times {\rm Sp}_2(2) &  2 & 1 & 2 &7\\
 \hline
\end{array}
\]
\caption{Sporadic quasiprimitive almost elusive groups.}
\label{tab:QuasiInA}
\end{table}

\begin{rmk}\label{r:main}
Here we remark on Tables \ref{tab:imprimitiveAE} and \ref{tab:QuasiInA}.  
\begin{itemize}
\item[{\rm (a)}] For both tables, in the column labeled $G$ we record the almost simple groups of interest. The column labeled $H$ records the core-free non-maximal subgroups of $G$ up to isomorphism for which $(G,H)$ is almost elusive. Additionally, the column labeled $x$ gives the unique conjugacy class of derangements of prime order. Note this is denoted by the relevant prime, $r$, since in all cases there is a unique conjugacy class in $G$ of elements of order $r$. 

\item[{\rm (b)}] In Table \ref{tab:QuasiInA}, the column labeled $a$ records the number of $G$-classes of subgroups $K$ of $G$ such that $G=G_0K$ and $H\cong K$, $b$ records the number of these $G$-classes such that $(G,K)$ is almost elusive and $c=d_G(H)$ denotes the depth of $H$ (see Corollary \ref{c:depth}). Additionally, in the column labeled $M$ we record representatives of the $G$-classes of maximal subgroups of $G$ that contain at least one subgroup $K$ of $G$ with $G=G_0K$, $H\cong K$ such that $(G,K)$ is almost elusive.

\item[{\rm (c)}] For Table \ref{tab:imprimitiveAE}, $p$ denotes a prime and we remind the reader that for a positive integer $t$ we use $\alpha(t)$ to denote the set of distinct prime divisors of $t$. 

\item[{\rm (d)}] In Table \ref{tab:QuasiInA} the $\dagger$ denotes the fact that the relevant $A_5$ or $S_5$ subgroup of $H$ is a primitive subgroup of the corresponding $A_6$ or $S_6$ subgroup of $M$.

\item[{\rm (e)}] In Table \ref{tab:QuasiInA}, we write $G=\Li_2(49).2_3=\Li_2(49).\langle\delta\phi\rangle$, where $\delta$ and $\phi$ denote standard involutory diagonal and field automorphisms of $G_0$, respectively (see \cite[Section 1.7]{BHR}). 
\end{itemize}
\end{rmk}

\end{document}